 \numberwithin{equation}{section}
\numberwithin{figure}{section}
\theoremstyle{plain}
\newtheorem{thm}{\protect\theoremname}[section]
\theoremstyle{remark}
\newtheorem{rem}[thm]{\protect\remarkname}
\theoremstyle{definition}
\newtheorem{defn}[thm]{\protect\definitionname}
\theoremstyle{plain}
\newtheorem{lem}[thm]{\protect\lemmaname}
\theoremstyle{plain}
\newtheorem{cor}[thm]{\protect\corollaryname}
\theoremstyle{remark}
\newtheorem{notation}[thm]{\protect\notationname}
\theoremstyle{plain}
\newtheorem{prop}[thm]{\protect\propositionname} 
\pgfplotsset{compat=1.15}
\renewcommand{\rho}{\varrho}
\renewcommand{\epsilon}{\varepsilon}
\newcommand{\1}{\mathbbm{1}}
\newcommand{\Q}{\mathbf{Q}}
\renewcommand{\d}{\;\mathrm{d}}
\DeclareMathOperator{\supp}{supp}
\newcommand{\uAO}{\overline{\mathbf{ord}}} 
\newcommand{\lAO}{\underline{\mathbf{ord}}} 
\newcommand{\AO}{\mathbf{ord}}
\renewcommand{\tilde}{\widetilde}
\renewcommand{\emptyset}{\varnothing}
\renewcommand{\phi}{\varphi}
\renewcommand{\epsilon}{\varepsilon}
\def\Q{\textbf{Q}}
\def\r{\varrho}
\def\rr{\hat{\varrho}}
\def\b{\mathcal{b}}
\newcommand{\s}{\alpha}
\DeclareMathOperator{\spann}{span}
\DeclareMathOperator{\card}{card}
\renewcommand{\tilde}{\widetilde}
\renewcommand{\emptyset}{\varnothing}
\renewcommand{\phi}{\varphi}
\renewcommand{\a}{a}
\def\N{\mathbb N}
\def\d{\;\mathrm d}
\def\R{\mathbb{R}}
\def\Q{\mathcal{Q}}
\def\B{B_{\s}^{p}}
\def\BB{\mathscr{B}W_{0}^{\s,p}}
\def\PP{\mathbb{P}}
\def\J{\mathfrak{J}}
\def\cprime{$'$}
\providecommand{\corollaryname}{Corollary}
\providecommand{\definitionname}{Definition}
\providecommand{\lemmaname}{Lemma}
\providecommand{\notationname}{Notation}
\providecommand{\propositionname}{Proposition}
\providecommand{\remarkname}{Remark}
\providecommand{\theoremname}{Theorem}
\begin{document}
\title[approximation order for Sobolev embeddings in euclidian measure spaces]{Approximation order of Kolmogorov, Gel{\cprime}fand, and linear
widths for Sobolev embeddings in euclidian measure spaces}
\author{Marc Kesseböhmer}
\email{mhk@uni-bremen.de}
\author{Linus Wiegmann}
\email{linus3@uni-bremen.de}
\address{Institute for Dynamical Systems, Faculty 3 – Mathematics und Computer
Science, University of Bremen, Bibliothekstr. 5, 28359 Bremen, Germany}
\begin{abstract}
In this paper we completely solve the problem of finding the (upper)
approximation order with respect to the Kolmogorov, Gel{\cprime}fand,
and linear widths for the embedding of the Sobolev spaces $W^{\s,p}$
and $W_{0}^{\s,p}$ in the euclidian measure spaces $L_{\nu}^{q}$
for an arbitrary Borel probability measure $\nu$ with support contained
in the open $m$-dimensional unit cube and for all possible choices
of $1\leq p,q\leq\infty$. We will determine the exact values for
the various upper approximation orders in terms of the $L^{q}$-spectrum
of $\nu$ only and finally give sufficient conditions imposed on the
regularity of the $L^{q}$-spectrum for the approximation orders to
exist. We also elucidate some intrinsic connections between the concept
of approximation order and the fractal geometric notion of the upper
and lower Minkowski dimension of the support of $\nu$.
\end{abstract}

\keywords{Kolmogorov widths, Gel{\cprime}fand widths, linear widths, Sobolev
spaces, $L^{q}$-spectrum, Minkowski dimension, partition function,
coarse multifractal formalism.}
\subjclass[2000]{46A32; 35P20; 42B35; 31B30; 28A80}

\maketitle

\section{Introduction and statement of main results}

In this paper we investigate the Kolmogorov, Gel{\cprime}fand, and
linear upper and lower approximation order of the embedding of the
unit ball of the Sobolev space $W_{0}^{\s,p}$, respectively $W^{\s,p}$,
into $L_{\nu}^{q}$, where $\nu$ is some Borel probability measure
with support contained in the unit cube. If $\nu$ is the restriction
of the Lebesgue measure, then the approximation orders have long been
well understood (cf\@. \cite{MR1074378}). Analogous statements for
smooth compact Riemannian manifolds have been found in more recent
times by Geller and Peseneson in \cite{MR3204029,MR3250051,MR3487230}.
If the measure has a singular part but is not the restriction of the
Hausdorff measure to a manifold, then only rough upper bounds have
been obtained by Birman/Solomjak and Borzov in \cite{Borzov1971,MR0281860,MR0301568,MR0209733,MR4444736};
first improvements in terms of the $L^{q}$-spectrum have been provided
recently by Niemann and the first author in \cite{MR4444736} but
only for the case $1<p\leq q<\infty$. In this paper we close the
gap and determine the upper approximation order for arbitrary measures
for all choices $1\leq p,q\leq\infty$ precisely in terms of the $L^{q}$-spectrum
and give easy checkable regularity conditions on the $L^{q}$-spectrum
of $\nu$ to guarantee the existence of the approximation order.

To determine the quantities of interest we make use of the method
of discretisation combined with some recent work on optimal partitions
elaborated in the context of Kre\u{\i}n–Feller operators in \cite{KN2022,KN2022b}.
As pointed out by Tikhomirov \cite{MR1074378} the method of discretisation
has first been introduced by Ismagilov \cite{MR0230117,MR0407509}
and was further developed by Ma\u{\i}orov \cite{MR0402349}, Ka\v sin
\cite{Kas77} and Glu\v skin \cite{MR687610}. Let us start with
introducing the relevant Sobolev spaces.

\subsection{Sobolev spaces}

Let us start with some basic notations to introduce the relevant Sobolev
spaces. Let $\R^{m}$ denote the $m$-dimensional euclidean space,
$m\in\N$. For a multi-index $k=\left(k_{1},\dots,k_{m}\right)\in\N_{\text{0}}^{m}$
we define $\left\vert k\right\vert \coloneqq\sum_{i=1}^{m}k_{i}$
and $x^{k}\coloneqq\prod_{i=1}^{m}x_{i}^{k_{i}}$. We consider the
domain $\mathring{\Q}$ denoting the interior of the half-open unit
cube $\Q=\left(0,1\right]^{m}$ and for $p\in\R_{\geq1}$, we let
$L^{p}\coloneqq L^{p}\left(\Q\right)=L_{\Lambda}^{p}\left(\Q\right)$
denote the set of real-valued $p$-integrable functions on $\mathring{\Q}$
with respect to the Lebesgue measure $\Lambda$ restricted to $\mathring{\Q}$.
The\emph{ Sobolev space}
\[
W^{\s,p}\left(\Q\right)\coloneqq\left\{ f\in L^{p}\left(\Q\right):\forall k\in\N_{0}^{m}\text{ with }\left\vert k\right\vert \leq\s,D^{k}f\in L^{p}\left(\Q\right)\right\} 
\]
(see e.\,g\@. \cite{MR1125990} and \cite{1961RuMa}) is defined
to be the set of all functions $f\in L^{p}\left(\Q\right)$, for which
the weak derivatives $D^{k}f\coloneqq\partial^{|k|}/\left(\partial_{x_{1}}^{k_{1}}\cdot\cdot\cdot\partial_{x_{m}}^{k_{m}}\right)f$
up to order $\s\in\N$ lie in $L^{p}\left(\Q\right)$, equipped with
the norm 
\[
\left\Vert f\right\Vert _{W^{\s,p}\left(\Q\right)}\coloneqq\left\Vert f\right\Vert _{L^{p}\left(\Q\right)}+\left\Vert f\right\Vert _{L^{\s,p}\left(\Q\right)},
\]
where we set $\left\Vert f\right\Vert _{L^{\s,p}\left(\Q\right)}\coloneqq\left(\int_{\Q}\left|\nabla_{\s}f\right|^{p}\d\Lambda\right)^{1/p}$
with $\left|\nabla_{\s}f\right|\coloneqq\left(\sum_{\left|k\right|=\s}\left|D^{k}f\right|^{2}\right)^{1/2}$.
We let $W_{0}^{\s,p}\left(\Q\right)$ denote the completion of $\mathcal{C}_{c}^{\infty}\left(\mathring{\Q}\right)$
with respect to $\left\Vert \,\cdot\,\right\Vert _{W^{\s,p}\left(\Q\right)}$,
where $\mathcal{C}_{c}^{\infty}\left(\mathring{\Q}\right)$ denotes
the set of all infinitely differentiable functions with compact support
in $\mathring{\Q}.$ For any half-open cube $Q\subset\Q$, which —throughout
the paper— are assumed to have edges parallel to the coordinate axes,
we have by Friedrichs' inequality, that the space $W_{0}^{\s,p}(Q)$
carries the equivalent norm given by $\left\Vert \,\cdot\,\right\Vert _{L^{\s,p}(Q)}$.
For the set of continuous function from $B\subset\R^{m}$ to $\R$
we write $\mathcal{C}\left(B\right).$ If 
\begin{equation}
1\leq p,q\leq\infty\;\text{and }\;\rr\coloneqq\s-m/p>0\tag{{\ensuremath{\spadesuit}}},\label{eq:RhoPositiv}
\end{equation}
 –which will be our standing assumption from now on–, then $W^{\s,p}\left(Q\right)$
is compactly embedded into $\left(\mathcal{C}(\overline{Q}),\left\Vert \,\cdot\,\right\Vert _{\mathcal{C}(\overline{Q})}\right)$,
with $\left\Vert \,\cdot\,\right\Vert _{\mathcal{C}(\overline{Q})}$
denoting the uniform norm (cf\@. \prettyref{lem: approxSupNorm}).
This allows us to pick a continuous representative of $W^{\s,p}\left(Q\right)$
and gives rise to the compact embeddings 
\[
\iota:W^{\s,p}\left(\Q\right)\hookrightarrow L_{\nu}^{q}\left(\Q\right)\:\text{and }\iota:W_{0}^{\s,p}\left(\Q\right)\hookrightarrow L_{\nu}^{q}\left(\Q\right).
\]
We will also write $L_{\nu}^{q}\coloneqq L_{\nu}^{q}\left(\Q\right)$,
$W^{\s,p}\coloneqq W^{\s,p}\left(\Q\right)$, $W_{0}^{\s,p}\coloneqq W_{0}^{\s,p}\left(\Q\right)$,
and $\left\Vert \,\cdot\,\right\Vert _{L^{\s,p}}=\left\Vert \,\cdot\,\right\Vert _{L^{\s,p}(\Q)}$.
Throughout the paper, we let $\B\in\left\{ \mathscr{B}W_{0}^{\s,p},\mathscr{B}W^{\s,p}\right\} $,
where $\mathscr{B}X\coloneqq\left\{ x\in X:\left\Vert x\right\Vert _{X}\leq1\right\} $
denotes the unit ball in a normed space $\left(X,\left\Vert \,\cdot\,\right\Vert _{X}\right)$.

\subsection{The notion of $n$-widths and approximation order }

Let us now turn to the main object of our investigation. For a normed
vector space $\left(V,\left\Vert \,\cdot\,\right\Vert _{V}\right)$
and a subset $A\subset V$, the \emph{Kolmogorov $n$-widths} of $A$
in $V$, $n\in\N_{0}$, is given by
\begin{align*}
d_{n}^{K}\left(A,V\right) & \coloneqq\inf\left\{ \sup_{x\in A}\inf_{y\in W}\left\Vert x-y\right\Vert _{V}:W<_{n}V\right\} ,
\end{align*}
where $W<_{n}$$V$ means that $W$ is a linear subspace of $V$ of
dimension $\dim W\leq n$. The $n$-width $d_{n}^{K}\left(A,V\right)$
measures the extend to which $A$ can be approximated by $n$-dimensional
subspaces of $V.$ In the same way we consider \emph{Gel{\cprime}fand}
$n$\emph{-widths} of $A$ in $V$, $n\in\N_{0}$, given by
\begin{align*}
d_{n}^{G}\left(A,V\right) & \coloneqq\inf\left\{ \sup_{x\in A\cap U}\left\Vert x\right\Vert _{V}:U<^{n}V\right\} ,
\end{align*}
where $U<^{n}V$ means that $U$ is a linear subspace of $V$ of co-dimension
not exceeding $n$. Recall that $U$ is of co-dimension $n$ if there
exist linearly independent functionals $\phi_{1},\ldots,\phi_{n}$
in the dual space $V'$ of $V$ such that $U=\left\{ v\in V:\phi_{1}\left(v\right)=\cdots=\phi_{n}\left(v\right)=0\right\} $.

Finally, we call 
\begin{align*}
d_{n}^{L}\left(A,V\right) & \coloneqq\inf\left\{ \sup_{x\in A}\left\Vert x-Bx\right\Vert _{V}:B\in\mathscr{L}_{n}(V)\right\} 
\end{align*}
the \emph{linear $n$-width}, where $\mathscr{L}_{n}(V)\coloneqq\left\{ B:V\to V\text{ linear and bounded},B(V)<_{n}V\right\} $
denotes the set of all bounded operators on $V$ of rank at most $n$.
If $A$ is precompact, then both $d_{n}^{K}\left(A,V\right)$ and
$d_{n}^{G}\left(A,V\right)$ converge to zero and we shall investigate
the order of this convergence; to this end, we define
\begin{align*}
\uAO_{\star}\left(A,V\right) & \coloneqq\limsup_{n\to\infty}\frac{\log\left(d_{n}^{\star}\left(A,V\right)\right)}{\log n},\lAO_{\star}\left(A,V\right)\coloneqq\liminf_{n\to\infty}\frac{\log\left(d_{n}^{\star}\left(A,V\right)\right)}{\log n}
\end{align*}
and refer to these quantities as the \emph{upper, resp\@. lower,
Kolmogorov ($\star=K$), }Gel{\cprime}fand\emph{ ($\star=G$), }and
\emph{linear ($\star=L$) approximation order of $A$ in $V$}, respectively.
If the upper and lower approximation order coincide, we simply write
$\AO_{\star}\left(A,V\right)$ for $\star\in\left\{ K,G,L\right\} $.

In this paper we focus on the choice $L_{\nu}^{q}=L_{\nu}^{q}\left(\Q\right)$
for the Banach space $V$, where $\nu$ is a finite Borel measure
with $\supp\left(\nu\right)\subset\mathring{\Q}$ and $\card\left(\text{supp}(\nu)\right)=\infty$
(to exclude trivial cases), and on the unit ball $\B$, which is naturally
embedded into $L_{\nu}^{q}$, in place of the set $A$. We would like
to note that the restriction $\supp\left(\nu\right)\subset\mathring{\Q}$
can be relaxed, but this leads to some technical problems concerning
the lower bounds. This problem could be handled as in \cite{KN2022b}
by considering $\mathcal{D}_{n}^{D}\coloneqq\left\{ Q\in\mathcal{D}_{n}:\partial\Q\cap\overline{Q}=\emptyset\right\} $
instead of $\mathcal{D}_{n}$. For the ease of exposition, we stick
to the assumption imposed on the support of $\nu$.

\subsection{The $L^{q}$-spectrum and partition optimise coarse multifractal
dimension}

\begin{figure}[h]
\center{\begin{tikzpicture}[scale=0.85, every node/.style={transform shape},line cap=round,line join=round,>=triangle 45,x=1cm,y=1cm] \begin{axis}[ x=3.7cm,y=2.3cm, axis lines=middle, axis line style={very thick},ymajorgrids=false, xmajorgrids=false, grid style={thick,densely dotted,black!20}, xlabel= {$t$}, ylabel= {$\beta_\nu (t)$}, xmin=-0.4 , xmax=1.5 , ymin=-0.3, ymax=3.1,x tick style={color=black}, xtick={0, .43,.5,0.6,1},xticklabels = {0,$\phantom{1}\hspace*{-2mm}s_\r$,$\phantom{1}\hspace*{-2mm}a$,$\phantom{1}\hspace*{-2mm}b$,1},  ytick={0,1, 2,3},yticklabels = {0,1, $\overline{\dim}_M(\nu)$ ,3}] \clip(-0.5,-0.3) rectangle (4,4); 
\draw[line width=1pt,smooth,samples=180,domain=-0.3:3.4] plot(\x,{log10(0.001^((\x))+0.3^((\x))+0.1^((\x))+0.599^((\x)))/log10(2)}); 
\draw [line width=01pt,dotted, domain=-0.05 :1.3] plot(\x,{3*(1-\x)});
\draw [line width=01pt,dotted, domain=-0.05 :1.3] plot(\x,{2*(1-\x)});
\draw [line width=01pt,dashed, domain=-0.15 :1.3] plot(\x,{2*\x});
  
\node[circle,draw] (c) at (2.48 ,0 ){\,};

\draw [line width=.7pt,dotted, gray] (0.43 ,0.)--(0.43,0.85); 
\draw [line width=.7pt,dotted, gray] (0.6  ,0 )-- (0.6,1.2);
\draw [line width=.7pt,dotted, gray] (0.5  ,0 )-- (0.5,1);
\end{axis} 
\end{tikzpicture}}

\caption{\label{fig:Moment-generating-function}For $m=3$ the solid line illustrates
the $L^{q}$-spectrum $\beta_{\nu}$ for the self-similar measure
$\nu$ supported on the\emph{ Sierpi\'{n}ski tetraeder} with all
four contraction ratios equal $1/2$ and with probability vector $\left(0.599,0.3,0.001,0.1\right)$;
$\beta_{\nu}\left(0\right)=\overline{\dim}_{M}\left(\nu\right)=2$.
For $\r=2$ (slope of the dashed line) the intersection of the spectrum
and the dashed line determines $s_{\r}$. The dotted lines $t\protect\mapsto3\left(1-t\right)$,
which coincides with the graph of $\beta_{\Lambda}$, and $t\protect\mapsto2\left(1-t\right)$
give rise to the two upper bounds $s_{\r}\protect\leq a\protect\leq b$
with $a\protect\coloneqq\overline{\dim}_{M}\left(\nu\right)/\left(\overline{\dim}_{M}\left(\nu\right)+\r\right)$
and the Birman/Solomjak–Borzov bound \cite{MR0209733,Borzov1971}
$b\protect\coloneqq m/\left(m+\r\right)$.}
\end{figure}
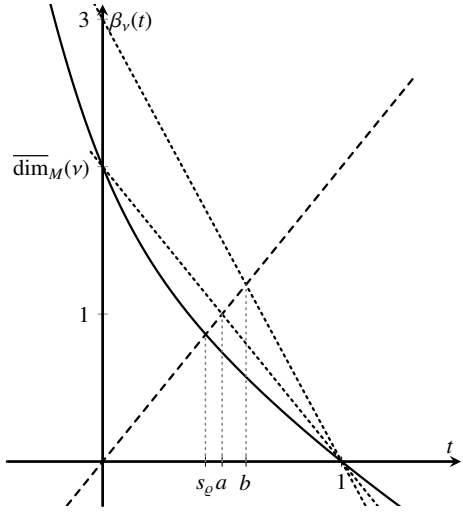

Our main result relies on some previous work, mainly \cite{KN2022,MR4444736}.
The central object of these investigations is the $L^{q}$-spectrum,
which has also attracted some attention in other research areas in
recent years, e.\,g\@. \parencite{MR3919361,KN22b}. For $n\in\N$,
we define the sets of dyadic cubes 
\[
\mathcal{D}_{n}\coloneqq\left\{ Q=\prod_{k=1}^{m}\left(l_{k}2^{-n},(l_{k}+1)2^{-n}\right]:(l_{k})_{k=1}^{m}\in\mathbb{Z}^{m},\nu\left(Q\right)>0\right\} ,\mathcal{D}\coloneqq\bigcup_{n\in\N}\mathcal{D}_{n}
\]
and define the $L^{q}$-spectrum of $\nu$, for $t\in\R_{\geq0}$,
by
\[
\beta_{\nu}(t)\coloneqq\limsup_{n\rightarrow\infty}\beta_{\nu,n}\left(t\right)\:\text{with }\text{\ensuremath{\beta_{\nu,n}\left(t\right)\coloneqq\frac{\log\left(\sum_{Q\in\mathcal{D}_{n}}\nu(Q)^{t}\right)}{\log\left(2^{n}\right)}}}.
\]
Note that $\beta_{\nu}$ is—as a limit superior of convex functions—itself
a convex function and that $\beta_{\nu}(0)$ is equal to the \emph{upper
Minkowski dimension} of $\supp\left(\nu\right)$ denoted by $\overline{\dim}_{M}\left(\nu\right)\coloneqq\overline{\dim}_{M}\left(\supp\left(\nu\right)\right)$
(see \prettyref{fig:Moment-generating-function} and for a proof of
$\beta_{\nu}(0)=\overline{\dim}_{M}\left(\nu\right)$ see e.\,g\@.
\cite{MR1312056}). We also need the\emph{ lower Minkowski dimension}
of $\supp\left(\nu\right)$ denoted by $\underline{\dim}_{M}\left(\nu\right)$,
for which similarly
\begin{align*}
\underline{\dim}_{M}\left(\nu\right) & =\liminf_{n\to\infty}\frac{\log\left(\card\mathcal{D}_{n}\right)}{\log\left(2^{n}\right)}.
\end{align*}
Finally, we set
\[
s_{b}\coloneqq\inf\left\{ t>0:\beta_{\nu}(t)-bt\leq0\right\} \;\text{ for }b>0.
\]
With the help of the concepts of the coarse multifractal formalism
as developed in \cite{KN2022,KN2022b}, necessary conditions for the
existence of the approximate order will be formulated. For this we
employ the concept of the optimise coarse multifractal dimension.
For $q<\infty$ and $\r\coloneqq q\rr=q\left(\s-m/p\right)$, we set

\[
\J\coloneqq\J_{\r}:\mathcal{D}\to\R,Q\mapsto\nu\left(Q\right)\Lambda\left(Q\right)^{\r/m}.
\]
 Note that $\r$ is positive, which is ensured by \prettyref{eq:RhoPositiv}
(this determines the shaded region in \prettyref{fig:A-sketch}, which
is divided into up to 4 zones–referred to as Case I–IV). Then with
\[
N_{\r,n}\left(\a\right)\coloneqq\left\{ Q\in\mathcal{D}_{n}:\J_{\r}\left(Q\right)\geq2^{-\a n}\right\} \:\text{and }\mathcal{N}_{\r,n}\left(\a\right)\coloneqq\card N_{\r,n}\left(\a\right)
\]
we set 
\[
\overline{F}_{\r}\left(\a\right)\coloneqq\limsup_{n\in\N}\frac{\log\mathcal{N}_{\r,n}(\a)}{n\log2}\text{ and }\underline{F}_{\r}\left(\a\right)\coloneqq\liminf_{n\in\N}\frac{\log\mathcal{N}_{\r,n}(\a)}{n\log2}
\]
and define the \emph{upper}, resp\@. \emph{lower},\emph{ optimise
coarse multifractal dimension} by
\begin{equation}
\overline{\mathcal{F}}_{\r}\coloneqq\sup_{\a>0}\frac{\overline{F}_{\r}\left(\a\right)}{\a},\text{resp. }\underline{\mathcal{F}}_{\r}\coloneqq\sup_{\a>0}\frac{\underline{F}_{\r}\left(\a\right)}{\a}.\label{eq:upperlower optimal MFdim}
\end{equation}
At this point we would like to draw your attention to the decisive
result from large deviation theory obtained in \cite{KN2022} in this
context, which states that
\[
s_{\r}=\overline{\mathcal{F}}_{\r}.
\]

\subsection{Main results}

Before summarising our main results, let us first note that by some
general considerations for $\star,\diamond\in\left\{ G,K,L\right\} $
the values of $\uAO_{\star}\left(A,V\right)$ and $\uAO_{\diamond}\left(A,V\right)$
as well as $\lAO_{\star}\left(A,V\right)$ and $\lAO_{\diamond}\left(A,V\right)$
differ by at most $1/2$ (see \prettyref{cor:1st Estimate}). In \cite{MR4444736},
$-1/\left(qs_{\r}\right)$ was obtained as a general upper bound for
the case $1<p\leq q<\infty$. In here, with some more involved methods
concerning both the upper and lower bounds, we are able to provide
the full picture as stated next. For $r\in\left[1,\infty\right]$
we will use the notion $r'\coloneqq r/\left(r-1\right)$ with $\infty'=1$
and $1'=\infty$. Further, we set 
\[
\overline{S}_{\r}\coloneqq\begin{cases}
{\displaystyle \frac{1}{qs_{\r}}}, & \,\text{for }1\leq q<\infty,\\
{\displaystyle \frac{\rr}{\overline{\dim}_{M}\left(\nu\right)}}, & \,\text{for }q=\infty,
\end{cases}\text{and }\quad\underline{S}_{\r}\coloneqq\begin{cases}
{\displaystyle \frac{1}{q\underline{\mathcal{F}}_{\r}}}, & \,\text{for }1\leq q<\infty,\\
{\displaystyle \frac{\phantom{\mathring{J}}\!\!\rr}{\underline{\dim}_{M}\left(\nu\right)}}, & \,\text{for }q=\infty.
\end{cases}
\]

\begin{figure}
\center{\begin{tikzpicture}[scale=.8]     
\begin{axis}[ axis equal image, domain=1:3,xmin=1, xmax=3.2,ymin=1, ymax=3.2,xlabel= {$p$}, ylabel= {$q$},         samples=100,axis y line=center, axis x line=middle, xtick={1.001,1.25,2,3},xticklabels = {1,${m}/{\s}$,2,3},  ytick={1.001, 2,3},yticklabels = { 1, 2,3} ] 
\fill[pattern=%north west lines
dots, pattern color=lightgray
] (1.24,0) rectangle (3.2,3.2);
\draw[line width=.7pt,smooth,dashed,samples=180,domain=1.3:2] plot(\x,{\x/(\x-1)}); 
%\draw [line width=.7pt,dotted, lightgray] (1.25,0)--(1.25,3.18);     
\draw [line width=.7pt,solid, black] (1 ,1)--(3.2,3.2); 
\draw [line width=.7pt,solid, black] (1 ,1)--(1,3.2); 
\draw [line width=.7pt,solid, black] (1 ,1)--(3.5,1); 
\draw [line width=.7pt,solid, black] (1 ,2)--(2,2);  
%\draw [line width=.7pt,dashed, black] (1 ,3)--(3,3); 
%\draw [line width=.7pt,dashed, black] (3 ,1)--(3,3);   
\draw [line width=.7pt,solid, black] (2,2)--(2,3.2);   
\draw (2.15,1.7 ) node[anchor=north west] {\Large{(I)}}; 
\draw (1.1,1.7 ) node[anchor=north west] {\Large{(II)}};
\draw (2.15,2.7 ) node[anchor=north west] {\Large{(III)}};
\draw (1.1,2.7 ) node[anchor=north west] {\Large{(IV.a)}};
\draw (1.5,3.1 ) node[anchor=north west] {\Large{(IV.b)}};
\end{axis} 
\end{tikzpicture}}

\caption{\label{fig:A-sketch}A sketch of the relevant domains for $\left(p,q\right)$:
I: $q\protect\leq p$; II: $p\protect\leq q\protect\leq2$; III: $2\protect\leq p\protect\leq q$;
IV: $p\protect\leq2\protect\leq q$. Our results cover the shaded
region with $p>m/\s$ (equal to $5/4$ in this illustration).}
\end{figure}
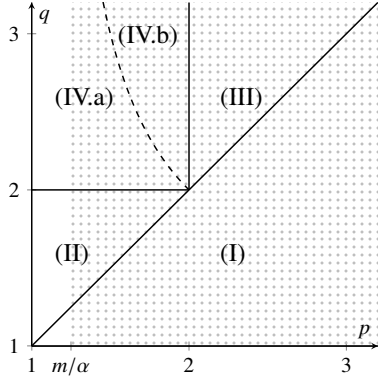

\begin{thm}[Upper approximation order]
\label{thm:MAIN}Assuming \prettyref{eq:RhoPositiv}, we obtain for
the upper approximation order
\begin{align*}
\uAO_{K}\left(\B,L_{\nu}^{q}\right) & =\begin{cases}
-\overline{S}_{\r}\left(q\right)+1/q-1/p, & \text{Cases (I) \& (III): }q\leq p\text{ or }2\leq p\leq q,\\
-\overline{S}_{\r}\left(q\right), & \text{Case (II): }p\leq q\leq2,\\
-\overline{S}_{\r}\left(q\right)+1/q-1/2, & \text{Case (IV): }p\leq2\leq q,
\end{cases}\\
\uAO_{G}\left(\B,L_{\nu}^{q}\right) & =\uAO_{K}\left(B_{\s}^{q'},L_{\nu}^{p'}\right),\\
\uAO_{L}\left(\B,L_{\nu}^{q}\right) & =\max\left\{ \uAO_{G}\left(\B,L_{\nu}^{q}\right),\uAO_{K}\left(\B,L_{\nu}^{q}\right)\right\} .
\end{align*}
\end{thm}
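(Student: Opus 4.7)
The plan is to prove all three upper bounds by the method of discretization adapted to dyadic partitions optimised by the coarse multifractal formalism of \cite{KN2022,KN2022b}, combined with the classical Kashin--Glu\v{s}kin--Ismagilov estimates for the Kolmogorov widths of the finite-dimensional embeddings $\ell_p^N \hookrightarrow \ell_q^N$. For each $n \in \N$ and each level $\a > 0$, I would first select an adapted partition $\mathcal{P}(\a,n) \subset \mathcal{D}$ whose blocks are the maximal dyadic cubes in $\Q$ satisfying $\J_\r(Q) \lesssim 2^{-\a n}$; by construction its cardinality is comparable to $\mathcal{N}_{\r,n}(\a)$, namely $2^{\overline{F}_\r(\a) n + o(n)}$.

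On each block $Q \in \mathcal{P}(\a,n)$, I approximate $f \in \B$ by a polynomial $P_Q f$ of degree $< \s$ using a Bramble--Hilbert/Poincaré argument together with the embedding $W^{\s,p}(Q) \hookrightarrow \mathcal{C}(\overline{Q})$ of \prettyref{lem: approxSupNorm}, yielding
\[
\|f - P_Q f\|_{\mathcal{C}(\overline{Q})} \leq C\, \Lambda(Q)^{\rr/m}\, \|f\|_{L^{\s,p}(Q)}.
\]
Summing with weights $\nu(Q)$, the $L_\nu^q$-approximation error becomes a weighted $\ell_q^N$-norm of the scalars $\J_\r(Q)^{1/q}\|f\|_{L^{\s,p}(Q)}$, where $N = \card \mathcal{P}(\a,n)$. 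Since $\sum_Q \|f\|_{L^{\s,p}(Q)}^p \leq 1$, the problem reduces to estimating $d_k^K(\mathscr{B}\ell_p^N, \ell_q^N)$ for a suitable $k$. The sharp Kashin--Glu\v{s}kin--Ismagilov bounds then contribute factors which, after balancing $k$, $N$ and the cost $2^{-\a n/q}$ and taking the supremum over $\a > 0$, translate into the case-dependent corrections $1/q - 1/p$, $0$, $1/q - 1/p$, $1/q - 1/2$ in Cases (I), (II), (III), (IV) respectively. The identity $\overline{\mathcal{F}}_\r = s_\r$ from \cite{KN2022} then converts the limit $\sup_\a \overline{F}_\r(\a)/\a$ into $s_\r$, producing the leading exponent $-\overline{S}_\r(q)$.

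The Gel{\cprime}fand statement follows from the general duality $d_n^G(\mathscr{B}X, Y) \asymp d_n^K(\mathscr{B}Y', X')$, applied via $(L_\nu^q)' = L_\nu^{q'}$ and the natural duality between the Sobolev balls; this exchanges $(p,q) \leftrightarrow (q',p')$ in the Kolmogorov formula. The linear width coincides with the maximum of the other two: the lower inequalities $d_n^L \geq d_n^K$ and $d_n^L \geq d_n^G$ are automatic (a rank-$\leq n$ operator has an $n$-dimensional range and a codimension-$\leq n$ kernel), while the discretization $f \mapsto \sum_Q P_Q f$ is itself a linear operator of rank $\leq \dim W$ that realises the Kolmogorov bound, with an analogous dual construction realising the Gel{\cprime}fand bound.

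The hardest point will be Case (IV), where the $N^{1/q - 1/2}$ rate hinges on Kashin's random-subspace construction; lifting this finite-dimensional estimate to the global Sobolev setting requires assembling the random subspaces across all partition blocks so that their cumulative dimension remains $\leq n$. A further delicate issue is the endpoint $q = \infty$ (and similarly $p = 1$): the duality degenerates and one must work directly with the $\mathcal{C}(\overline{Q})$-embedding on each block to obtain the exponent $\rr/\overline{\dim}_M(\nu)$ appearing in $\overline{S}_\r(\infty)$, which arises from a covering count rather than from the $L^q$-spectrum.
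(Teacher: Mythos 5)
Your proposal covers only one direction of the theorem. Each displayed formula is an \emph{equality} for the upper approximation order, so besides the upper estimates you must also prove matching lower estimates on $\uAO_\star$. In the paper these occupy all of \prettyref{sec:Coarse-Muiltifractal-formalism}: one extracts well-separated families $Q_n\left(\a\right)\subset N_{\r,n}\left(\a\right)$ (\prettyref{lem:disjoint-3-cubes}), plants scaled bump functions $u_Q$ on them (Lemmas \ref{lem:mollifier}, \ref{lem:support-scaling}, \ref{lem:isomorphic_spheres}), controls the projection $\mathcal{Q}_n$ (\prettyref{lem:operatornormQn}) to obtain $d_k^{\star}\left(\BB,L_{\nu}^{q}\right)\gg2^{-\a n/q}d_k^{\star}\left(\b_p^{c_{\a,n}},\ell_q^{c_{\a,n}}\right)$, and then optimises over $\a$ using $\overline{\mathcal{F}}_{\r}=s_{\r}$. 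None of this appears in your sketch. Relatedly, you misplace where the coarse multifractal formalism enters: the quantity $\sup_{\a}\overline{F}_{\r}\left(\a\right)/\a$ belongs to this lower-bound direction, whereas in the upper bound the exponent $s_\r$ comes from the cardinality asymptotics \prettyref{eq:partition_entropy} of the stopping-time partition $P_t$, whose cardinality is of order $t^{-s_\r+o(1)}$ and is \emph{not} comparable to $\mathcal{N}_{\r,n}\left(\a\right)$ for a fixed $\a$, contrary to what you assert.

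Even for the upper bounds, the single-scale scheme you describe cannot produce the case-dependent gain $-\gamma$. Projecting onto piecewise polynomials over one partition and then applying the finite-dimensional width to the coefficient vector gives a total error consisting of the projection residual, which by \prettyref{lem: approxSupNorm} is of order $t^{1/q}\left\Vert f\right\Vert_{L^{\s,p}}$ for $p\leq q$, plus the width term $t^{1/q}N^{-\gamma}$; the residual dominates and you only recover the exponent $-\overline{S}_{\r}$, i.e.\ the bound already known from \cite{MR4444736}. The paper circumvents this with the multiscale decomposition $f=\sum_k V_k f$, $V_k=U_k-U_{k-1}$, and the allocation $n_k=\kappa M_k$ for $k<N$ (exact capture, zero width), $n_N=\left\lceil\kappa M_N/2\right\rceil$ (where the Kashin-type saving $M_N^{-\gamma}$ is harvested), and $n_k=0$ for $k>N$ together with the accelerated scales $j_k=\lambda k$ with $\lambda=\left\lceil 1+\gamma/\overline{S}_{\r}\right\rceil$, which makes the tail comparable to $T_NM_N^{-\gamma}$ (Propositions \ref{prop:UpperBound_Discretisation} and \ref{prop:ImprovedupperBoundCase}). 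Two further points: your claim that the discretising operator realises the Kolmogorov rate is false in Case (IV.b), where $d_n^L\asymp n^{1/2-1/p}$ is of strictly larger order than $d_n^K\asymp n^{1/q-1/2}$; the correct matching for $\uAO_L$ comes from \prettyref{cor:LinearFiniteDim}. And the Gel{\cprime}fand case is obtained in the paper by pushing the exact finite-dimensional duality \prettyref{eq:SymmetryCase_G_K} through the discretisation, not by an infinite-dimensional duality between $L_\nu^q$ and the Sobolev ball, which you would have to justify separately.
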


\begin{rem}
\label{rem:Main}Let us make the following remarks:
\begin{enumerate}
\item Note that if $\beta_{\nu}$ is continuous on $\left[0,1\right]$,
then $q\mapsto S_{q\rr}$ is continuous on $\left[1,\infty\right]$.
In fact, we have for all $1\leq p,q\leq\infty$ the closed form expression
\[
1/S_{\r}=\inf\left\{ t>0:\beta_{\nu}\left(t/q\right)-t\rr\leq0\right\} .
\]
This observation leads to the conclusion that the various approximation
orders also dependent continuously on $p$ and $q$ and that the monotonicity
properties of $\beta_{\nu}$ carry over as well.
\item The duality between the Kolmogorov and Gel{\cprime}fand approximation
order is a consequence of the finite-dimensional analogue as stated
in \prettyref{eq:SymmetryCase_G_K} combined with the method of discretisation.
\item If $\nu$ is chosen to be the $s$-dimensional Hausdorff measure restricted
to an $s$-dimensional manifold, then our results are in line with
the orders found in \cite{MR3204029,MR3250051,MR3487230}. This observation
can be naturally generalised to an $s$-Ahlfors–David regular measure
$\nu$. As for the $s$-dimensional Hausdorff measure, the $L^{q}$-spectrum
is then given by the linear function $\beta_{\nu}:t\mapsto\left(1-t\right)s$
(see e.\,g\@. \cite{KN2022b}) and in this situation $s_{\r}$ is
easy to determine.
\item Explicitly, for the upper Gel{\cprime}fand approximation order\emph{
}we have
\[
\uAO_{G}\left(\B,L_{\nu}^{q}\right)=\begin{cases}
-\overline{S}_{\r}\left(q\right)+1/q-1/p, & \text{Cases (I) \& (II): }q\leq p\,\text{or }p\leq q\leq2,\\
-\overline{S}_{\r}\left(q\right), & \text{Case (III): }2\leq p\leq q,\\
-\overline{S}_{\r}\left(q\right)+1/2-1/p, & \text{Case (IV): }p\leq2\leq q
\end{cases}
\]
and, setting $\left(x\right)_{+}\coloneqq\max\left\{ 0,x\right\} $
for $x\in\R$, we have for the upper linear approximation order 
\[
\uAO_{L}\left(\B,L_{\nu}^{q}\right)=\begin{cases}
-\overline{S}_{\r}\left(q\right)+1/q-1/2, & \text{Case (IV.a): }p\leq2\leq q\leq p',\\
-\overline{S}_{\r}\left(q\right)+1/2-1/p, & \text{Case (IV.b): }p\leq2\leq p'<q,\\
-\overline{S}_{\r}\left(q\right)+\left(1/q-1/p\right)_{+}, & \text{else. }
\end{cases}
\]
\end{enumerate}
\end{rem}

\begin{defn}
We define two notions of regularity.
\begin{enumerate}
\item The measure $\nu$ is called \emph{multifractal-regular for $\r<\infty$}
if $\underline{\mathcal{F}}_{\r}=s_{\r}$ and \emph{for} \emph{$\r=\infty$}
if $\underline{\dim}_{M}\left(\nu\right)=\overline{\dim}_{M}\left(\nu\right)$.
\item The measure $\nu$ is called\emph{ $L^{q}$-regular for $\r<\infty$
if}
\begin{enumerate}
\item $\beta_{\nu}\left(t\right)=\liminf_{n}\beta_{\nu,n}\left(t\right)\in\R$
for all $t\in\left(s_{\r}-\varepsilon,s_{\r}\right)\cap\left[0,1\right]$,
for some $\varepsilon>0$, or \label{enu:-LqRegularity1}
\item $\beta_{\nu}\left(s_{\r}\right)=\liminf_{n}\beta_{\nu,n}\left(s_{\r}\right)$
and $\beta_{\nu}$ is differentiable in $s_{\r}$.
\end{enumerate}
\end{enumerate}
\end{defn}

\begin{rem}
We know from \cite[Theorem 1.2]{MR4444736} (see also \cite{KN2022b})
that if $\nu$ is $L^{q}$-regular for $\r<\infty$, then $\nu$ is
multifractal-regular for $\r$, and that the latter property can be
verified in many situations like self-similar measures (without separation
conditions), conformal weak Gibbs measures, condensation systems etc\@.
which are described e.\,g\@. in \cite{KN22b,KN21}. In the case
$q=\infty$, the multifractal-regularity of $\nu$ actually reduces
to the question whether $\overline{\dim}_{M}\left(\nu\right)=\beta_{\nu}\left(0\right)=\liminf_{n}\beta_{\nu,n}\left(0\right)=\underline{\dim}_{M}\left(\nu\right)$,
which is known for many concrete examples.
\end{rem}

\begin{thm}[Lower approximation order]
\label{thm:MAIN_lower} For $\star\in\left\{ K,G,L\right\} $ and
assuming \prettyref{eq:RhoPositiv}, we have the following bounds
on the lower approximation orders:
\[
\overline{S}_{\r}-\underline{S}_{\r}+\uAO_{\star}\left(\B,L_{\nu}^{q}\right)\leq\lAO_{\star}\left(\B,L_{\nu}^{q}\right)\leq\uAO_{\star}\left(\B,L_{\nu}^{q}\right)
\]
and for $q=\infty$ we have the following equalities 
\begin{align*}
\lAO_{K}\left(\B,L_{\nu}^{\infty}\right) & =\begin{cases}
-{\displaystyle \rr/\underline{\dim}_{M}\left(\nu\right)}-1/p, & \text{for }p>2,\\
-{\displaystyle \rr/\underline{\dim}_{M}\left(\nu\right)}-1/2, & \text{for }p\leq2,
\end{cases}\\
\lAO_{G}\left(\B,L_{\nu}^{\infty}\right) & =\lAO_{L}\left(\B,L_{\nu}^{\infty}\right)=\begin{cases}
-{\displaystyle \rr/\underline{\dim}_{M}\left(\nu\right)}, & \text{for }p>2,\\
-{\displaystyle \rr/\underline{\dim}_{M}\left(\nu\right)}+1/2-1/p, & \text{for }p\leq2.
\end{cases}
\end{align*}
In particular, if $\nu$ is multifractal-regular for $\r$, then the
approximation order $\AO_{\star}\left(\B,L_{\nu}^{q}\right)$ exists.
\end{thm}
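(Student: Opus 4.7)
The right inequality $\lAO_{\star}(\B,L^{q}_{\nu})\leq\uAO_{\star}(\B,L^{q}_{\nu})$ is immediate from $\liminf\leq\limsup$. The plan for the left inequality is to revisit the proof of \prettyref{thm:MAIN} and isolate the single step where the $\limsup$ in $n$ is used. In the discretisation approach one covers $\mathring{\Q}$ by a dyadic subfamily of $\mathcal{D}$ selected along the level sets of $\J_{\r}$, approximates Sobolev functions on each cube by polynomials of degree $<\s$, and reduces to finite-dimensional $\ell^{p}_{N}\hookrightarrow\ell^{q}_{N}$ widths, for which the sharp Kashin--Gluskin-type estimates supply the correction terms $1/q-1/p$, $1/q-1/2$, and $1/2-1/p$ in the case split. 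All passages between Sobolev norms on cubes and discrete sequences are uniform in $n$, so the measure-dependent input enters \emph{only} through the counting quantity $\mathcal{N}_{\r,n}(\a)$, followed by a supremum over $\a>0$; it is only at this last stage that the choice of $\limsup$ versus $\liminf$ in $n$ matters.

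For the upper bound of $\uAO_{\star}$ one selects, for each large $n$, a scale along which $\mathcal{N}_{\r,n}(\a)$ almost attains $\overline{F}_{\r}(\a)$, producing $\overline{\mathcal{F}}_{\r}=s_{\r}$ and hence $\overline{S}_{\r}$. To make the same estimate valid for \emph{every} sufficiently large $n$ --- which is what is needed to lower-bound $\lAO_{\star}$ --- one is forced to work with $\underline{F}_{\r}(\a)$ and $\underline{\mathcal{F}}_{\r}$, replacing $\overline{S}_{\r}$ by $\underline{S}_{\r}$ in the formula while leaving every other term intact. This gives the desired $\lAO_{\star}\geq\uAO_{\star}+(\overline{S}_{\r}-\underline{S}_{\r})$. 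The case $q=\infty$ is cleaner: the counting quantity collapses to $\card\mathcal{D}_{n}$, and
\[
\underline{\dim}_{M}(\nu)=\liminf_{n\to\infty}\frac{\log(\card\mathcal{D}_{n})}{n\log 2}
\]
delivers the matching lower bound without any large-deviation detour, so the same formulas as in \prettyref{thm:MAIN} hold verbatim under the substitution $\overline{\dim}_{M}(\nu)\mapsto\underline{\dim}_{M}(\nu)$; the dichotomy $p>2$ versus $p\leq 2$ simply tracks the behaviour of $d_{n}(\mathscr{B}\ell^{p}_{N},\ell^{\infty}_{N})$ at the discretised level. Finally, multifractal regularity for $\r$ means $\underline{\mathcal{F}}_{\r}=s_{\r}$ (or $\underline{\dim}_{M}(\nu)=\overline{\dim}_{M}(\nu)$ when $q=\infty$), hence $\underline{S}_{\r}=\overline{S}_{\r}$, and the sandwich inequality collapses to $\lAO_{\star}=\uAO_{\star}$, which is the existence statement.

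The hard part will be the Kolmogorov lower bound: for every sufficiently large $n$ one must exhibit an $(n+1)$-dimensional subspace of $\B$ on which no $n$-dimensional subspace of $L^{q}_{\nu}$ approximates to accuracy better than $n^{-\underline{S}_{\r}+o(1)}$ (up to the additive correction). The construction should use smooth bump functions supported on disjoint dyadic cubes at a scale witnessing the $\liminf$ in $\underline{\mathcal{F}}_{\r}$ rather than the $\limsup$ witnessing $\overline{\mathcal{F}}_{\r}$, and the resulting discrete obstruction has to be analysed via a Bernstein-type estimate in the model $\ell^{p}_{N}\hookrightarrow\ell^{q}_{N}$; the subtlety is ensuring that no intermediate estimate silently replaces a $\liminf$ by a $\limsup$ through absorption into constants. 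The Gel{\cprime}fand lower bound then follows from the duality $\uAO_{G}(\B,L^{q}_{\nu})=\uAO_{K}(B^{q'}_{\s},L^{p'}_{\nu})$ already exploited in \prettyref{thm:MAIN}, and the linear lower bound is immediate from $d^{L}_{n}\geq\max\{d^{K}_{n},d^{G}_{n}\}$.
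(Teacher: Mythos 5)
Your overall strategy matches the paper's: the right inequality is trivial, the left inequality is a lower bound on $\lAO_{\star}$ obtained by running the bump-function/discretisation lower bound at scales witnessing $\underline{F}_{\r}(\a)$ (resp. $\underline{\dim}_{M}(\nu)$) instead of $\overline{F}_{\r}(\a)$, reducing to the Kashin--Gluskin finite-dimensional widths, and the "in particular" clause is exactly the collapse $\underline{S}_{\r}=\overline{S}_{\r}$ under multifractal regularity. This is what the paper does in Propositions \ref{prop:discretesation_lower bound widths} and \ref{prop:AllLowerBounds}(2). (One inaccuracy in your framing: the upper bound for $\uAO_{\star}$ does not come from choosing scales where $\mathcal{N}_{\r,n}(\a)$ nearly attains $\overline{F}_{\r}(\a)$; it comes from the adapted partitions $P_{t}$ and the identity \eqref{eq:partition_entropy}, while $\mathcal{N}_{\r,n}(\a)$ and the large-deviation identity $s_{\r}=\overline{\mathcal{F}}_{\r}$ enter only on the lower-bound side. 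This does not affect the validity of your lower-bound argument.)

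The genuine gap is in the $q=\infty$ equalities. Your argument produces only the inequality $\lAO_{\star}(\B,L_{\nu}^{\infty})\geq-\rr/\underline{\dim}_{M}(\nu)+c$ (with $c$ the case-dependent correction). To upgrade this to an equality you need the matching upper bound $\lAO_{\star}(\B,L_{\nu}^{\infty})\leq-\rr/\underline{\dim}_{M}(\nu)+c$, and this does \emph{not} follow from $\lAO_{\star}\leq\uAO_{\star}=-\rr/\overline{\dim}_{M}(\nu)+c$, because $\overline{\dim}_{M}\geq\underline{\dim}_{M}$ makes that bound point the wrong way. The paper supplies the missing half in \prettyref{prop:ImprovedupperBoundCase}(1): one reruns the upper-bound discretisation of \prettyref{prop:UpperBound_Discretisation} — which for $q=\infty$ uses the uniform partitions $\mathcal{D}_{N}$, so $M_{N}=\card\mathcal{D}_{N}$ — and evaluates the resulting estimate $d_{\mathbf{n}(N)}^{\star}\ll T_{N}M_{N}^{-\gamma}+T_{\lambda N}$ along a subsequence of levels $N$ realising $\liminf_{N}\log(\card\mathcal{D}_{N})/(N\log2)=\underline{\dim}_{M}(\nu)$. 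This step is specific to $q=\infty$ (for $q<\infty$ the partitions $P_{t}$ are adapted and no clean $\liminf$ analogue is available, which is exactly why the theorem asserts equalities only for $q=\infty$), and your proposal neither performs it nor acknowledges that it is needed; "the same formulas hold verbatim under the substitution" is asserted rather than proved. You should add this second, upper-bound-along-a-subsequence argument to close the proof of the displayed equalities.
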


\begin{rem}
For $q=\infty$, and for measures $\nu$ with $\underline{\dim}_{M}\left(\nu\right)<\overline{\dim}_{M}\left(\nu\right)$
we immediately obtain examples for which $\lAO_{\star}\left(\B,L_{\nu}^{\infty}\right)<\uAO_{\star}\left(\B,L_{\nu}^{\infty}\right)$.
We refer the interested reader to \cite{KN2022}, where a one-dimensional
example of a measure $\nu$ is given which is not $L^{q}$-regular
and for which $\underline{S}_{\r}<\overline{S}_{\r}$ for all $\r>0$.
\end{rem}

\subsection*{Hilbert space cases }

It is known that in the case where the reference space $V$ is a Hilbert
space, the different $n$-widths are closely related \cite[Chapter II: Proposition 5.2 and 8.8]{MR774404}.
In our situation we distinguish the following two cases for $V=L_{\nu}^{2}$:
\begin{itemize}
\item If $p\geq2$, then the following equality holds
\begin{align*}
\uAO_{G}\left(\B,L_{\nu}^{2}\right) & =\uAO_{K}\left(\B,L_{\nu}^{2}\right)=\uAO_{L}\left(\B,L_{\nu}^{2}\right)=\frac{-1}{2s_{\r}}+\frac{1}{2}-\frac{1}{p},
\end{align*}
\item and if $p<2$, then we have the strict inequality
\begin{align*}
\uAO_{G}\left(\B,L_{\nu}^{2}\right) & =\frac{-1}{qs_{\r}}-\frac{1}{p}+\frac{1}{2}<\frac{-1}{2s_{\r}}=\uAO_{K}\left(\B,L_{\nu}^{2}\right)=\uAO_{L}\left(\B,L_{\nu}^{2}\right).
\end{align*}
\end{itemize}
Note that the case where $p=q=2$ has been treated in \cite{MR4444736}
in the context of polyharmonic operators.\emph{ }

\subsection*{Fractal-geometric upper bounds \label{subsec:Geometric-upper-Bounds}}

Using the convexity of $\beta_{\nu}$ we easily deduce the following
upper bounds (which correspond to the values $a$ and $b$ as illustrated
in \prettyref{fig:Moment-generating-function})
\begin{equation}
-\overline{S}_{\r}\leq-\frac{\rr}{\overline{\dim}_{M}\left(\nu\right)}-\frac{1}{q}\leq-\frac{\rr}{m}-\frac{1}{q}.\label{eq:geometric/general_upper_bound}
\end{equation}
The weakest bound on the right hand side was obtained in \cite{MR0209733,Borzov1971}
and the improved version in the middle is mentioned in \cite{MR4444736}.
This inequality can be used for each of the cases in \prettyref{thm:MAIN}.
Moreover, note that in the first inequality, equality actually holds
for $q=\infty$, while a strict inequality holds for $q<\infty$ and
for $\beta_{\nu}$, which is strictly convex.
\begin{rem}
\label{rem:Note_Upper_bound for q<p} Note that the upper bound $-\overline{S}_{\r}-1/p+1/q$
of the Kolmogorov approximation order for the case $q\leq p$ is necessarily
negative since by \prettyref{eq:geometric/general_upper_bound} we
obtain in this case $-\s/m<0$ as an upper bound.
\end{rem}

\subsection{Outline}

The paper is organised as follows. In \prettyref{sec:Preliminaries}
we give the basic functional analytic background from approximation
theory. In particular, we state the frequently used inequalities and
properties of the widths for finite-dimensional vector spaces. \prettyref{sec:Poincar=0000E9-inequalities+upper bounds}
is devoted to the upper bounds of our main results. For this, the
ideas of discretisation and optimal partitions are suitably merged
(cf\@. Propositions \ref{prop:UpperBound_Discretisation} and \ref{prop:ImprovedupperBoundCase}
as our main results in this respect). Finally, \prettyref{sec:Coarse-Muiltifractal-formalism}
borrows ideas from the coarse multifractal formalism to establish
the lower bounds under consideration. Again, an interplay between
discretisation methods for the approximation order and finding optimal
disjoint families of cubes obtained with large deviation techniques
is employed (cf\@. Propositions \ref{prop:discretesation_lower bound widths}
and \ref{prop:AllLowerBounds}). In all the subsequent sections, \prettyref{eq:RhoPositiv}
is assumed throughout. Note that we will give the proofs of the upper
bounds for $\B=\mathscr{B}W^{\s,p}$, while the proofs of the lower
bounds are stated for $\B=\BB$. The claims then follow by the trivial
observation 
\begin{equation}
\begin{split}\uAO_{\star}\left(\BB,L_{\nu}^{q}\right) & \leq\uAO_{\star}\left(\mathscr{B}W^{\s,p},L_{\nu}^{q}\right),\\
\lAO_{\star}\left(\BB,L_{\nu}^{q}\right) & \leq\lAO_{\star}\left(\mathscr{B}W^{\s,p},L_{\nu}^{q}\right).
\end{split}
\label{eq:trivialObservation}
\end{equation}

\section{Preliminaries \label{sec:Preliminaries}}

To provide the prerequisites for our approach, in this section we
give some basic facts from approximation theory, mainly for the finite-dimensional
case, following the textbooks \cite{MR1393437,MR774404}.

For $T\in\mathscr{L}\left(X,V\right)$ a linear operator and for the
unit ball $\mathscr{B}X$ in $X$ we set $d_{n}^{\star}\left(T\right)\coloneqq d_{n}^{\star}\left(T\left(\mathscr{B}X\right),V\right)$,
$\star\in\left\{ K,G,L\right\} $. If $T$ denotes a natural injection
$\iota:\mathscr{B}X\hookrightarrow V$, e.\,g\@. of a Sobolev spaces
$X$, then we write $d_{n}^{\star}\left(\mathscr{B}X,V\right)$ instead
of $d_{n}^{\star}\left(\iota\right)$. We will make the following
general observations.
\begin{lem}[{\cite[Chapter II: Proposition 3.2.]{MR774404}}]
\label{lem:-Gelfand_extension} Assume that X and Y are normed linear
spaces, and X is a subspace of Y, then 
\[
d_{n}^{G}(A,X)=d_{n}^{G}(A,Y).
\]
\end{lem}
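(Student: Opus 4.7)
The plan is to establish the two inequalities $d_n^G(A,X)\le d_n^G(A,Y)$ and $d_n^G(A,Y)\le d_n^G(A,X)$ separately; implicit throughout is that $X$ carries the norm induced from $Y$ and that $A\subset X$, so that $A\cap U=A\cap (U\cap X)$ for every $U\subset Y$ and the $X$- and $Y$-norms agree on the points actually being measured. The main tool is the Hahn--Banach theorem, which allows one to transfer systems of linear functionals (and hence codimension-at-most-$n$ subspaces) between $X$ and $Y$.

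For $d_n^G(A,X)\le d_n^G(A,Y)$, I would take any $U<^n Y$ and write $U=\bigcap_{i=1}^n\ker\psi_i$ with $\psi_i\in Y'$. Setting $\phi_i\coloneqq\psi_i\vert_X\in X'$, the subspace $U\cap X=\bigcap_i\ker\phi_i$ of $X$ has codimension at most $n$ (restriction can only shrink the codimension). Combining $A\cap U=A\cap(U\cap X)$ with the compatibility of norms gives $\sup_{x\in A\cap(U\cap X)}\lVert x\rVert_X=\sup_{x\in A\cap U}\lVert x\rVert_Y$, and taking infima over $U$ yields the inequality.

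For the reverse direction, starting from $U<^n X$ written as $U=\bigcap_{i=1}^n\ker\phi_i$ with $\phi_i\in X'$, Hahn--Banach produces extensions $\tilde\phi_i\in Y'$. These remain linearly independent on $Y$ (any relation on $Y$ restricts to one on $X$), so $\tilde U\coloneqq\bigcap_i\ker\tilde\phi_i$ has codimension at most $n$ in $Y$ and satisfies $\tilde U\cap X=U$; the identity $A\cap \tilde U=A\cap U$ together with norm compatibility then yields the matching bound. The only subtle point—not really an obstacle—is that codimensions need not be preserved exactly under restriction or extension, but the definition of $U<^n V$ only requires codimension \emph{at most} $n$, which absorbs this issue cleanly.
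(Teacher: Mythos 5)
Your argument is correct and is essentially the standard proof of this fact; the paper itself does not reprove the lemma but cites \cite[Chapter~II, Proposition~3.2]{MR774404}, whose proof is exactly this restriction/Hahn--Banach transfer of annihilating functionals, using that $A\subset X$ so that $A\cap U=A\cap(U\cap X)$ and the norms agree on $A$. Your remark that only \emph{codimension at most} $n$ is needed correctly disposes of the one point where linear independence could degenerate.
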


The following result can be found in \cite[Chapter II: p.  21, Proposition 5.1, Corollar 8.5, Theorem 8.9]{MR774404}.
\begin{lem}
\label{lem:GeneralBounds dn,dn} We have 
\[
\max\left\{ d_{n}^{K}\left(T\right),d_{n}^{G}\left(T\right)\right\} \leq d_{n}^{L}\left(T\right)\leq\left(\sqrt{n}+1\right)\min\left\{ d_{n}^{K}\left(T\right),d_{n}^{G}\left(T\right)\right\} .
\]
 
\end{lem}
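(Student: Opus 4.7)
I would prove the two inequalities separately.

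For the lower bound $\max\{d_n^K(T), d_n^G(T)\} \leq d_n^L(T)$, the idea is to unpack the definitions: given any $B \in \mathscr{L}_n(V)$, the image $B(V)$ is admissible in the Kolmogorov infimum while $\ker B$ is admissible in the Gel{\cprime}fand infimum. For $x \in T(\mathscr{B}X)$ one has $\inf_{y \in B(V)} \|x - y\|_V \leq \|x - Bx\|_V$, and for $x \in T(\mathscr{B}X) \cap \ker B$ one has $\|x\|_V = \|x - Bx\|_V$. Taking the supremum over $x$ and then the infimum over $B$ in both inequalities yields $d_n^K(T) \leq d_n^L(T)$ and $d_n^G(T) \leq d_n^L(T)$.

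For the upper bound $d_n^L(T) \leq (\sqrt{n}+1)\min\{d_n^K(T), d_n^G(T)\}$ the key tool is the Kadec--Snobar{\cprime} theorem: every $n$-dimensional subspace $W$ of a Banach space $V$ admits a bounded projection $P \colon V \to W$ with $\|P\|_{V \to V} \leq \sqrt{n}$. For the Kolmogorov half, fix $\varepsilon > 0$, pick an almost-optimal $n$-dimensional $W$, let $P$ be the Kadec--Snobar{\cprime} projection onto $W$, and set $B \coloneqq P \in \mathscr{L}_n(V)$. Since $Py = y$ for every $y \in W$, for each $x \in T(\mathscr{B}X)$ and $y \in W$
\[
\|x - Bx\|_V = \|(x-y) - P(x-y)\|_V \leq (1 + \|P\|)\|x - y\|_V \leq (\sqrt{n}+1)\|x - y\|_V,
\]
and taking $\inf_{y \in W}$, then $\sup_x$, and letting $\varepsilon \downarrow 0$ gives $d_n^L(T) \leq (\sqrt{n}+1) d_n^K(T)$.

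For the Gel{\cprime}fand half, I would dualize: an almost-optimal codimension-$\leq n$ subspace $U$ has annihilator $U^\perp \subset V^*$ of dimension $\leq n$, and applying Kadec--Snobar{\cprime} there produces a norm-$\leq \sqrt{n}$ linear lift $\sigma \colon V/U \to V$ of the quotient map $\pi \colon V \to V/U$ with $\pi\sigma = \id_{V/U}$. Setting $B \coloneqq \sigma\pi \in \mathscr{L}_n(V)$, one has $(I-B)(V) \subset U$ and $\|I-B\|_{V\to V} \leq 1 + \sqrt{n}$, and an estimate parallel to the Kolmogorov case, now exploiting the Gel{\cprime}fand admissibility of $U$, translates into $d_n^L(T) \leq (\sqrt{n}+1) d_n^G(T)$. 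The main obstacle is this last step: reconciling ``projections with prescribed image'' (Kadec--Snobar{\cprime} in $V$) with ``projections with prescribed kernel'' (Kadec--Snobar{\cprime} in $V^*$) requires a careful duality argument, and since the statement is quoted as a standard fact from \cite[Chapter~II]{MR774404}, I would in the end simply cite that reference for the Gel{\cprime}fand half rather than spell out the dualization.
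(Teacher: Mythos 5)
The paper offers no argument for this lemma at all---it is quoted directly from \cite[Chapter II]{MR774404}---so your sketch is measured against the standard proof in that reference. The parts you spell out in full are correct: for any $B\in\mathscr{L}_n(V)$ the image $B(V)$ is admissible for the Kolmogorov width and $\ker B$ is admissible for the Gel{\cprime}fand width (it is cut out by the $\le n$ functionals obtained by composing $B$ with coordinate functionals on the finite-dimensional space $B(V)$), which gives the left inequality; and the Kadec--Snobar projection argument yields $d_{n}^{L}\left(T\right)\leq\left(\sqrt{n}+1\right)d_{n}^{K}\left(T\right)$ exactly as you write it.

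The Gel{\cprime}fand half is where the sketch, taken literally, does not close, and the obstacle is not quite the one you name. With $B=\sigma\pi$ you do get $(I-B)(V)\subset U$ and $\Vert I-B\Vert\leq 1+\sqrt{n}$, but for $x\in A=T(\mathscr{B}X)$ the error $(I-B)x$ lies in $U$ without lying in any controlled multiple of $A$, whereas the Gel{\cprime}fand width only controls $\Vert z\Vert_{V}$ for $z\in A\cap U$; so no estimate ``parallel to the Kolmogorov case'' is available. The standard repair exploits the operator structure of $A$ and works on the domain side: write $U=\bigcap_{i}\ker\phi_{i}$ with $\phi_{i}\in V'$, pull back to the codimension-$\le n$ subspace $T^{-1}(U)\subset X$, and use Kadec--Snobar in $X$ to build a rank-$\le n$ projection $P$ on $X$ with $\ker P=T^{-1}(U)$ and $\Vert I-P\Vert_{X\to X}\leq1+\sqrt{n}$. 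Because $P$ factors through the functionals $\phi_{i}\circ T$, the map $TP$ factors as $B\circ T$ for some $B\in\mathscr{L}_{n}(V)$, and then $(I-P)x\in\left(1+\sqrt{n}\right)\bigl(\mathscr{B}X\cap T^{-1}(U)\bigr)$ for every $x\in\mathscr{B}X$, whence $\Vert Tx-BTx\Vert_{V}\leq\left(1+\sqrt{n}\right)\sup_{z\in A\cap U}\Vert z\Vert_{V}$. This is also why the inequality is stated for operators $T$ rather than for arbitrary precompact sets. Since you explicitly flag the step and fall back on \cite{MR774404} for it---which is all the paper itself does for the entire lemma---the proposal is acceptable as a whole, but be aware that the difficulty is a genuine gap in the estimate as sketched, not merely a bookkeeping issue of reconciling prescribed images with prescribed kernels.
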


If $T$ is a compact operator, then as a consequence we obtain that
the various approximation orders can only differ by $1/2$.
\begin{cor}
\label{cor:1st Estimate} For $\star,\diamond\in\left\{ G,K,L\right\} $
such that the associated approximation orders are finite, we have
\[
\left|\uAO_{\star}\left(T\left(\mathscr{B}X\right),V\right)-\uAO_{\diamond}\left(T\left(\mathscr{B}X\right),V\right)\right|\leq1/2
\]
and 
\[
\left|\lAO_{\star}\left(T\left(\mathscr{B}X\right),V\right)-\lAO_{\diamond}\left(T\left(\mathscr{B}X\right),V\right)\right|\leq1/2.
\]
\end{cor}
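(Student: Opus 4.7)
The plan is to derive the estimate directly from the previous lemma, the only analytic input being that $\log(\sqrt{n}+1)/\log n \to 1/2$ as $n\to\infty$. Compactness of $T$ forces the three $n$-width sequences to tend to zero, so their logarithms are eventually negative; combined with $\log n>0$ for $n\geq 2$, the quotients $\log d_n^\star(T)/\log n$ are well defined, and the finiteness assumption on the approximation orders lets us freely pass to $\limsup$ and $\liminf$.

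First I would handle the pair $\{K,L\}$. The lemma yields the sandwich $d_n^K(T)\leq d_n^L(T)\leq (\sqrt{n}+1)\,d_n^K(T)$. Taking logarithms, dividing by $\log n$, and using the elementary fact that $\limsup(a_n+b_n)=a+\limsup b_n$ whenever $a_n\to a$, this gives $\uAO_K\leq \uAO_L\leq \uAO_K+1/2$, hence $|\uAO_K-\uAO_L|\leq 1/2$. The same manipulation with $\liminf$ in place of $\limsup$ produces $|\lAO_K-\lAO_L|\leq 1/2$, and the pair $\{G,L\}$ is treated identically using the other half of the sandwich.

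For the remaining pair $\{K,G\}$, rather than applying the triangle inequality via $L$ (which would only give the suboptimal bound $1$), I would chain the two sides of the lemma directly: $d_n^K(T)\leq d_n^L(T)\leq (\sqrt{n}+1)\,d_n^G(T)$ and symmetrically $d_n^G(T)\leq (\sqrt{n}+1)\,d_n^K(T)$. The same log-and-divide step then produces $|\uAO_K-\uAO_G|\leq 1/2$ and $|\lAO_K-\lAO_G|\leq 1/2$.

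No genuine obstacle arises; the only pitfall worth flagging is that the sharp $1/2$ bound requires applying the lemma directly to each pair and resisting the temptation to estimate $\{K,G\}$ transitively through $L$, which would double the gap.
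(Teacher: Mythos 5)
Your proposal is correct and follows essentially the same route as the paper: both rest on Lemma~\ref{lem:GeneralBounds dn,dn} and the fact that $\log(\sqrt{n}+1)/\log n\to1/2$, the paper merely compressing your three pairwise sandwiches into the single observation that every $d_n^\star(T)$ lies between $\min\{d_n^K(T),d_n^G(T)\}$ and $d_n^L(T)$, whose ratio is at most $\sqrt{n}+1$.
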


\begin{proof}
This follows for $\star\in\left\{ G,K,L\right\} $ and $\diamond\in\left\{ G,K\right\} $
since 
\[
0\leq\left|\frac{\text{\ensuremath{\log d_{n}^{\star}\left(T\right)}}}{\log n}-\frac{\log d_{n}^{\diamond}\left(T\right)}{\log n}\right|\leq\frac{1}{\log n}\log\frac{d_{n}^{L}\left(T\right)}{\min\left\{ d_{n}^{K}\left(T\right),d_{n}^{G}\left(T\right)\right\} }\leq\frac{\log\left(\sqrt{n}+1\right)}{\log n}.
\]
\end{proof}
\begin{notation}
For two functions $f,g$ depending on $x>0$ we write $f\ll g$ if
there exists a constant $C>0$ such that $0<f(x)\leq Cg(x)$ for all
$x$ sufficiently large. If $f\ll g$ and $g\ll f$ holds, we write
$f\asymp g$. 
\end{notation}

We will need some further facts about $n$-widths for the finite-dimensional
case. The following results can be found e.\,g\@. in \cite[pp.  409, 411, 482]{MR1393437}
and are valid for $p,q\in\left[1,\infty\right]$. Again, $p'$ denotes
the dual value of $p\in\left[1,\infty\right]$. We let $\ell_{p}^{M}$
denote the Banach space $\R^{M}$ with respect to the $p$-norm $\left\Vert x\right\Vert _{\ell_{p}^{M}}\coloneqq\left(\sum_{i=1}^{M}\left|x_{i}\right|^{p}\right)^{1/p}$,
for $1\leq p<\infty$ and $\left\Vert x\right\Vert _{\ell_{\infty}^{M}}\coloneqq\max_{i=1,\ldots,M}\left|x_{i}\right|$,
for $p=\infty$, $x\in\R^{M}$, and for the unit ball in $\ell_{p}^{M}$
we write $\b_{p}^{M}\coloneqq\mathscr{B}\ell_{p}^{M}$.
\begin{lem}
\label{lem:dnfinteDim} For $1\leq n\leq k$, we have 
\[
d_{n}^{L}\left(\b_{p}^{k},\ell_{q}^{k}\right)=d_{n}^{L}\left(\b_{q'}^{k},\ell_{p'}^{k}\right)\asymp\begin{cases}
d_{n}^{K}\left(\left(\b_{p}^{k},\ell_{q}^{k}\right)\right), & \text{for }q\leq p',\\
d_{n}^{K}\left(\left(\b_{q'}^{k},\ell_{p'}^{k}\right)\right), & \text{for }p'\leq q,
\end{cases}
\]
and 
\begin{equation}
d_{n}^{G}\left(\b_{p}^{k},\ell_{q}^{k}\right)=d_{n}^{K}\left(\b_{q'}^{k},\ell_{p'}^{k}\right).\label{eq:SymmetryCase_G_K}
\end{equation}
\end{lem}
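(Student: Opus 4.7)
The plan is to deduce each part of the lemma from the classical duality theorems for $n$-widths, together with geometric estimates on finite-dimensional $\ell_p$ balls.

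First, for the identity $\id : \ell_p^k \to \ell_q^k$ between finite-dimensional (hence reflexive) Banach spaces one has the general duality principle $d_n^G(T) = d_n^K(T^*)$, valid for any bounded operator $T$ between reflexive spaces. The adjoint of $\id : \ell_p^k \to \ell_q^k$ is precisely $\id : \ell_{q'}^k \to \ell_{p'}^k$, so this immediately yields \eqref{eq:SymmetryCase_G_K}. In outline, the duality itself is proved by pairing a subspace $U \subset \ell_p^k$ of codimension at most $n$ with its annihilator $U^\perp \subset \ell_{q'}^k$, which has dimension at most $n$, and observing that the dual norm of the restriction $\id|_U$ equals the Kolmogorov distance of $\b_{q'}^k$ to $U^\perp$ in $\ell_{p'}^k$.

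Second, for the linear-width duality $d_n^L(\b_p^k,\ell_q^k) = d_n^L(\b_{q'}^k,\ell_{p'}^k)$ one uses the symmetric characterisation of $d_n^L$: if $B \in \mathscr{L}_n(\ell_p^k,\ell_q^k)$ approximates $\id$, then its adjoint $B^* \in \mathscr{L}_n(\ell_{q'}^k,\ell_{p'}^k)$ has the same rank and satisfies $\|\id - B\|_{\ell_p^k \to \ell_q^k} = \|\id - B^*\|_{\ell_{q'}^k \to \ell_{p'}^k}$. Taking the infimum over all such $B$ on both sides gives equality.

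Third, the order comparisons $d_n^L \asymp d_n^K(\b_p^k,\ell_q^k)$ for $q \le p'$ and $d_n^L \asymp d_n^K(\b_{q'}^k,\ell_{p'}^k)$ for $p' \le q$ are the deeper part. The lower bound $d_n^K \le d_n^L$ is trivial from \prettyref{lem:GeneralBounds dn,dn}, and the combination with the Gelfand/Kolmogorov duality of the previous step reduces both regimes to establishing a matching upper bound $d_n^L \ll d_n^K$ in one of them. This upper bound is the classical Kashin--Gluskin--Makovoz type result: one exhibits a near-optimal linear operator whose operator norm matches the Kolmogorov width, using type/cotype information of $\ell_p^k$ and $\ell_q^k$, together with Kashin decompositions when $2 \in [p,q']$ or $[q',p]$.

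The main obstacle is the last point, since establishing $d_n^L \ll d_n^K$ without the extra factor $\sqrt n$ coming from \prettyref{lem:GeneralBounds dn,dn} genuinely requires the fine geometric estimates of Kashin and Gluskin and cannot be obtained from abstract duality alone. For our purposes, however, we simply invoke the detailed results compiled in \cite[pp.~409, 411, 482]{MR1393437}, where all three statements are recorded in the stated form.
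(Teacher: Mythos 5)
Your proposal is correct and matches the paper's treatment: the paper itself offers no proof of this lemma beyond the citation of \cite[pp.~409, 411, 482]{MR1393437}, and the duality arguments you sketch for $d_{n}^{G}(\b_{p}^{k},\ell_{q}^{k})=d_{n}^{K}(\b_{q'}^{k},\ell_{p'}^{k})$ and $d_{n}^{L}(\b_{p}^{k},\ell_{q}^{k})=d_{n}^{L}(\b_{q'}^{k},\ell_{p'}^{k})$ (adjoints preserve rank and operator norm; annihilator pairing between codimension-$n$ and dimension-$n$ subspaces) are exactly the standard ones underlying the cited results. You are also right that the genuinely hard content is the comparison $d_{n}^{L}\ll d_{n}^{K}$ in the regime $q\leq p'$ (to which the other regime reduces by the $d^{L}$ symmetry), and deferring that to the reference is precisely what the paper does.
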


The following lemma is a consequence of a more general result in \cite{MR687610}. 

\begin{lem}[{\cites[p. 411]{MR1393437}[Theorem 1.3]{Kas77}}]
\label{lem:KolmogorovFiniteDim} For the Kolmogorov widths we have
\[
d_{n}^{K}\left(\left(\b_{p}^{2n},\ell_{q}^{2n}\right)\right)\asymp\begin{cases}
n^{1/q-1/p}, & \text{for }q\leq p\text{ or }2\leq p\leq q,\\
1, & \text{for }p\leq q\leq2,\\
n^{1/q-1/2}, & \text{for }p\leq2\leq q.
\end{cases}
\]
\end{lem}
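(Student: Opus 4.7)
The plan is to establish these asymptotics case-by-case using standard tools from finite-dimensional width theory, with the genuinely hard cases reducing to Kashin's theorem on almost-Euclidean sections of $\ell_{1}^{2n}$.

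For the upper bounds I would first dispatch the trivial regimes. When $q\leq p$, H\"older's inequality yields $\|x\|_{q}\leq (2n)^{1/q-1/p}\|x\|_{p}$ pointwise, so choosing the $0$-dimensional subspace $W=\{0\}$ already gives $d_{n}^{K}(\b_{p}^{2n},\ell_{q}^{2n})\leq (2n)^{1/q-1/p}$. When $p\leq q\leq 2$, the inclusion $\b_{p}^{2n}\subset\b_{q}^{2n}$ combined with the same choice $W=\{0\}$ yields the trivial bound $d_{n}^{K}\leq 1$. The Kashin regime $p\leq 2\leq q$ is the main point: invoking \cite{Kas77}, there exists an $n$-dimensional $E\subset\R^{2n}$ on which $\|\,\cdot\,\|_{1}$ and $n^{-1/2}\|\,\cdot\,\|_{2}$ are comparable; taking $W=E^{\perp}$ and interpolating between the resulting $\ell_{2}$-type estimate on $E$ and the trivial $\ell_{\infty}$-bound transfers the estimate to $\ell_{q}$ for every $q\geq 2$. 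The case $2\leq p\leq q$ follows by composing the contraction $\b_{p}^{2n}\subset\b_{2}^{2n}$ with the Kashin-type bound from $\b_{2}^{2n}$ into $\ell_{q}^{2n}$.

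For the matching lower bounds the generic strategy is Bernstein-type: for any $W<_{n}\ell_{q}^{2n}$ one exhibits a witness $x\in\b_{p}^{2n}$ whose distance to $W$ is at least the claimed order. Cases (I) and (III) are witnessed by normalised indicators $|I|^{-1/p}\1_{I}$ for index sets $I$ with $\card I\asymp n$. Case (II) is witnessed by a single standard basis vector, since any $n$-dimensional subspace of $\R^{2n}$ must miss at least one of the $2n$ coordinate vectors essentially completely. The Kashin lower bound follows from a classical volume-ratio argument applied to an orthogonal complement of $W$.

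The main obstacle is, as expected, the Kashin case $p\leq 2\leq q$: both the upper bound (a probabilistic construction of an almost-Euclidean section of $\b_{1}^{2n}$) and the matching lower bound (a volume-ratio estimate valid for every $n$-dimensional $W$) are genuinely delicate. Rather than reproduce these arguments I would cite \cite{Kas77,MR687610} directly, as the lemma statement already does, and defer to the textbook exposition in \cite{MR1393437}.
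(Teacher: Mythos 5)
The paper offers no proof of this lemma at all: it is imported verbatim from the literature (Lorentz--Golitschek--Makovoz, p.~411, and Ka\v sin/Glu\v skin), so your decision to cite those sources rather than reprove them is exactly what the paper does, and in that sense the proposal matches the paper's treatment.

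Two caveats about the roadmap you sketch, in case you intend it as more than a pointer. First, the comparability in Ka\v sin's splitting goes the other way: on a Ka\v sin section $E<_n\R^{2n}$ one has $\left\Vert x\right\Vert _{\ell_{1}^{2n}}\asymp\sqrt{n}\left\Vert x\right\Vert _{\ell_{2}^{2n}}$, not $\left\Vert x\right\Vert _{\ell_{1}^{2n}}\asymp n^{-1/2}\left\Vert x\right\Vert _{\ell_{2}^{2n}}$. Second, and more substantively, taking $W=E^{\perp}$ and approximating by the orthogonal projection $P_{E^{\perp}}$ cannot give the Kolmogorov bound $n^{1/q-1/2}$ for $q>2$: the relevant operator norm satisfies $\left\Vert P_{E^{\perp}}\right\Vert _{\ell_{2}^{2n}\to\ell_{\infty}^{2n}}=\max_{i}\left\Vert P_{E^{\perp}}e_{i}\right\Vert _{\ell_{2}^{2n}}\geq\bigl(\tfrac{1}{2n}\sum_{i}\left\Vert P_{E^{\perp}}e_{i}\right\Vert _{\ell_{2}^{2n}}^{2}\bigr)^{1/2}=2^{-1/2}$ by the trace identity, for \emph{every} $n$-codimensional $E$. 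The standard route (and the one in the cited texts) is to prove the Gel{\cprime}fand estimate $d_{n}^{G}\bigl(\b_{1}^{2n},\ell_{2}^{2n}\bigr)\ll n^{-1/2}$ from the almost-Euclidean section and then pass to the Kolmogorov width of $\bigl(\b_{2}^{2n},\ell_{\infty}^{2n}\bigr)$ by the duality \prettyref{eq:SymmetryCase_G_K}, interpolating afterwards to reach general $q\geq2$ and using $\b_{p}^{2n}\subset\b_{2}^{2n}$ for $p\leq2$. Your case (II) lower bound via coordinate vectors is fine and is made rigorous by the same trace identity, $\sum_{i}\dist_{\ell_{2}}\left(e_{i},W\right)^{2}=2n-\dim W\geq n$, together with $\left\Vert \,\cdot\,\right\Vert _{\ell_{q}^{2n}}\geq\left\Vert \,\cdot\,\right\Vert _{\ell_{2}^{2n}}$ for $q\leq2$; the lower bounds in cases (I) and (III), however, are not obtained by a single indicator witness but are themselves nontrivial results of Pietsch/Stesin and Glu\v skin, so the citation is doing real work there too.
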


We deduce a similar result for the Gel{\cprime}fand widths by employing
the duality.
\begin{cor}
\label{cor:GelfandFiniteDim} For the Gel{\cprime}fand widths we
have
\[
d_{n}^{G}\left(\left(\b_{p}^{2n},\ell_{q}^{2n}\right)\right)\asymp\begin{cases}
n^{1/q-1/p}, & \text{for }q\leq p\text{ or }p\leq q\leq2,\\
1, & \text{for }2\leq p<q,\\
n^{1/2-1/p}, & \text{for }p\leq2\leq q.
\end{cases}
\]
\end{cor}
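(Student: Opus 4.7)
The plan is to derive this corollary as a direct consequence of the duality identity in Lemma \ref{lem:dnfinteDim}, namely
\[
d_{n}^{G}\left(\b_{p}^{2n},\ell_{q}^{2n}\right)=d_{n}^{K}\left(\b_{q'}^{2n},\ell_{p'}^{2n}\right),
\]
and then to apply Lemma \ref{lem:KolmogorovFiniteDim} to the right-hand side with $p$ replaced by $q'$ and $q$ replaced by $p'$. No new analytic input is needed; the task is purely to translate the three case distinctions of the Kolmogorov result under the involution $r\mapsto r'$.

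First I would record the elementary equivalences for conjugate exponents on $[1,\infty]$. Since $r\mapsto r'$ is strictly decreasing and an involution, we have $p'\leq q'$ iff $q\leq p$, $q'\geq 2$ iff $q\leq 2$, and $p'\geq 2$ iff $p\leq 2$. Combining these:
\begin{itemize}
\item the condition ``$p'\leq q'$ or $2\leq q'\leq p'$'' of \prettyref{lem:KolmogorovFiniteDim} translates to ``$q\leq p$ or $p\leq q\leq 2$'';
\item the condition ``$q'\leq p'\leq 2$'' translates to ``$2\leq p\leq q$'';
\item the condition ``$q'\leq 2\leq p'$'' translates to ``$p\leq 2\leq q$''.
\end{itemize}

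Next I would check that the exponents convert correctly: $1/p'-1/q'=(1-1/p)-(1-1/q)=1/q-1/p$ and $1/p'-1/2=1/2-1/p$. Plugging these into \prettyref{lem:KolmogorovFiniteDim} gives precisely
\[
d_{n}^{K}\left(\b_{q'}^{2n},\ell_{p'}^{2n}\right)\asymp\begin{cases}
n^{1/q-1/p}, & q\leq p\text{ or }p\leq q\leq 2,\\
1, & 2\leq p\leq q,\\
n^{1/2-1/p}, & p\leq 2\leq q,
\end{cases}
\]
which matches the statement of the corollary (the case ``$2\leq p<q$'' with value $1$ and ``$p\leq 2\leq q$'' with value $n^{1/2-1/p}$ together cover the dual regimes, and the two remaining regimes share the common power $n^{1/q-1/p}$).

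The only potential pitfall is the bookkeeping on the boundaries $p=2$, $q=2$, and $p=q$, where several cases overlap; I would check that the values returned by the different cases agree there (e.g.\ at $p=q=2$ all three expressions reduce to a constant), so the $\asymp$ equivalence is consistent. Beyond this, there is no substantive step — the corollary is a formal dualisation of \prettyref{lem:KolmogorovFiniteDim} via \eqref{eq:SymmetryCase_G_K}.
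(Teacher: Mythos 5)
Your proposal is correct and is essentially identical to the paper's own proof: both derive the corollary from the duality identity \eqref{eq:SymmetryCase_G_K} in \prettyref{lem:dnfinteDim} applied to \prettyref{lem:KolmogorovFiniteDim}, using the same translations of the case conditions under $r\mapsto r'$ and the identities $1/p'-1/q'=1/q-1/p$ and $1/p'-1/2=1/2-1/p$. Your write-up is in fact more careful than the paper's one-line justification, but there is no substantive difference in approach.
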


\begin{proof}
This follows by combining the second part of \prettyref{lem:dnfinteDim}
with \prettyref{lem:KolmogorovFiniteDim} by observing $1/p'-1/q'=1/q-1/p$
and $1/2-1/p'=1/q-1/2$.
\end{proof}
For the linear widths we have the following asymptotic.
\begin{cor}
\label{cor:LinearFiniteDim} For the linear widths we have
\[
d_{n}^{L}\left(\left(\b_{p}^{2n},\ell_{q}^{2n}\right)\right)\asymp\begin{cases}
n^{1/q-1/p}, & \text{for }q\leq p,\\
1, & \text{for }2\leq p\leq q\text{ or }p\leq q\leq2,\\
n^{1/q-1/2}, & \text{for }p\leq2\leq q\leq p',\\
n^{1/2-1/p}, & \text{for }p\leq2,p'\leq q.
\end{cases}
\]
\end{cor}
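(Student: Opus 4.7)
The plan is to mirror the short proof of Corollary \ref{cor:GelfandFiniteDim}: first invoke the linear-to-Kolmogorov reduction of Lemma \ref{lem:dnfinteDim}, then evaluate the resulting Kolmogorov width using Lemma \ref{lem:KolmogorovFiniteDim}. Specifically, I would rewrite $d_n^L(\b_p^{2n},\ell_q^{2n})$—up to $\asymp$—as $d_n^K(\b_p^{2n},\ell_q^{2n})$ when $q\leq p'$ and as $d_n^K(\b_{q'}^{2n},\ell_{p'}^{2n})$ when $p'\leq q$, noting that on the overlap line $q=p'$ both branches agree by Lemma \ref{lem:dnfinteDim}. The substantive step is then a case-by-case inspection to verify that each of the five sub-cases of the corollary's piecewise formula comes out of the appropriate regime of Lemma \ref{lem:KolmogorovFiniteDim}, repeatedly using the elementary equivalences $x\leq y\Leftrightarrow y'\leq x'$, $1/x'-1/y'=1/y-1/x$, and $1/2-1/x=1/x'-1/2$.

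Concretely, I would proceed as follows. For $q\leq p$ both branches of Lemma \ref{lem:dnfinteDim} apply and both land in case 1 of Lemma \ref{lem:KolmogorovFiniteDim}—for the second branch because $q\leq p$ forces $p'\leq q'$—yielding $n^{1/q-1/p}$. For $p\leq q\leq 2$ we have $q\leq 2\leq p'$, so the first branch applies with $(p,q)$ in case 2, giving the constant $1$. For $2\leq p\leq q$ we have $p'\leq 2\leq q$, so the second branch applies and $(q',p')$ satisfies $q'\leq p'\leq 2$, again case 2, giving $1$. For $p\leq 2\leq q\leq p'$ the first branch applies with $(p,q)$ in case 3, yielding $n^{1/q-1/2}$. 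For $p\leq 2$ and $p'\leq q$ the second branch applies with $(q',p')$ satisfying $q'\leq 2\leq p'$, again case 3, producing $n^{1/p'-1/2}=n^{1/2-1/p}$.

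The hard part, such as it is, is purely combinatorial: I must ensure that the listed parameter regions in the statement partition the quadrant $1\leq p,q\leq \infty$ modulo boundary overlaps, and that the branch of Lemma \ref{lem:dnfinteDim} chosen in each region is compatible with the regime of Lemma \ref{lem:KolmogorovFiniteDim} invoked. Consistency on the overlap line $q=p'$ is automatic via the duality identity $1/p'-1/q'=1/q-1/p$, so no essentially new estimates are needed beyond what the two preceding lemmas already provide.
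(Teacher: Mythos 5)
Your proposal is correct and follows essentially the same route as the paper: the paper's proof likewise combines the first part of Lemma~\ref{lem:dnfinteDim} with Lemma~\ref{lem:KolmogorovFiniteDim}, packaging your split at the line $q=p'$ into the identity $\max\{1/q-1/2,\,1/2-1/p\}=1/q-1/2$ for $q\leq p'$ and $=1/2-1/p$ for $q\geq p'$. Your case-by-case verification (including the dual translations $p'\leq q'$ and $1/p'-1/2=1/2-1/p$) is accurate, so no gap.
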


\begin{proof}
This follows immediately from the first part of \prettyref{lem:dnfinteDim},
\prettyref{lem:KolmogorovFiniteDim}, and the fact that 
\[
\max\left\{ 1/q-1/2,1/2-1/p\right\} =\begin{cases}
1/q-1/2, & \text{for }q\leq p',\\
1/2-1/p, & \text{for }q\geq p'.
\end{cases}
\]
\end{proof}

\section{\label{sec:Poincar=0000E9-inequalities+upper bounds}Poincaré inequalities
and upper bounds }

The proof of the upper bounds combines ideas from the proof of Ka\v sin's
Theorem as provided in \cite[Theorem 5.4 in Chapter 14]{MR1393437}
with some recent work on optimal partitions elaborated in the context
of Kre\u{\i}n–Feller operators in \cite{KN2022,KN2022b}. To this
end, for $q<\infty$, we consider the finite partition of dyadic cubes
of $\Q$ given by
\begin{equation}
P_{t}\coloneqq\left\{ Q\in\mathcal{D}:\J\left(Q\right)<t\,\,\,\&\,\,\,\exists Q'\in\mathcal{D}_{\left|\log_{2}\Lambda\left(Q\right)\right|/m-1}:Q'\supset Q\,\,\,\&\,\,\,\J(Q')\geq t\right\} \label{eq:partition}
\end{equation}
for $t>0$, and we recall from \cite{KN2022,KN2022b} that 
\begin{equation}
\limsup_{t\downarrow0}\frac{\log\left(\card\left(P_{t}\right)\right)}{-\log(t)}=s_{\r}.\label{eq:partition_entropy}
\end{equation}
Further, we make use of some ideas and results from \parencite[§ 3]{MR0217487}
and \parencite{MR0482138}. For a half-open cube $Q\in\mathcal{D}$
and for every $u\in W^{\s,p}\left(Q\right)$, we associate a polynomial
$r\in\R\left[x_{1},\ldots,x_{m}\right]$ of degree at most $\s-1$\textbf{
}satisfying the conditions
\begin{equation}
\int_{Q}x^{k}r(x)\d\Lambda(x)=\int_{Q}x^{k}u(x)\d\Lambda(x)\,\,\text{for all}\,\,|k|\leq\s-1.\label{eq:Polynomial}
\end{equation}
By an application of \emph{Hilbert's Projection Theorem} with respect
to $L^{2}(Q)$, we have that $r$ is uniquely determined by \prettyref{eq:Polynomial}
and set $\PP_{Q}u\coloneqq r$. Note that $\PP_{Q}$ defines a linear
projection operator which maps $W^{\s,p}\left(Q\right)$ to the finite-dimensional
space of polynomials in $m$ variables of degree not exceeding $\s-1$.
For this space we fix the basis $\mathcal{B}_{\s}\coloneqq\left\{ p_{1},\ldots,p_{\kappa}\right\} $
for some $\kappa\in\N$. We consider sequences $t_{k}\searrow0$ and
a strictly increasing sequence $\left(j_{k}\right)\in\N^{\N}$ such
that
\begin{gather}
\begin{split} & \sup\left\{ \left|\log_{2}\Lambda\left(Q\right)-\log_{2}\Lambda(\tilde{Q})\right|:P_{t_{n+1}}\in Q\subset\tilde{Q}\ni P_{t_{n}},n\in\N\right\} <\infty,\\
 & \sup_{k\in\N}\left(j_{k+1}-j_{k}\right)<\infty.
\end{split}
\label{eq:boundOnCardinalityOFCubes}
\end{gather}
The first bound can be ensured, for example, by $\eta\leq t_{n+1}/t_{n}$,
$n\in\N,$ for some $\eta\in\left(0,1\right)$. Let us define for
$k\in\N$
\[
\boldsymbol{P}_{k}\coloneqq\begin{cases}
P_{t_{k}}, & \text{for }q<\infty,\\
\mathcal{D}_{n_{k}}, & \text{for }q=\infty,
\end{cases}
\]
the partition of $\Q$ into half-open cubes defined in \prettyref{eq:partition}
for the first case and the partition of equal sized dyadic cubes neglecting
cubes with $\nu$ measure zero in the second case. Its cardinality
is denoted by $M_{k}\coloneqq\card\left(\boldsymbol{P}_{k}\right)$.

Fix $Q\in\mathcal{D}_{n}$ with side length $c_{Q}=\left(\Lambda\left(Q\right)\right)^{1/m}=2^{-n}$
for some $n\in\N$ and let $b\in\R^{m}$ denote the midpoint of $Q$.
Then for 
\begin{equation}
\phi_{Q}:\R^{m}\to\R^{m},x\mapsto c_{Q}x+b\label{eq:Def_phi_Q}
\end{equation}
we have $\phi_{Q}\left(\Q\right)=Q$. Then $\mathcal{B}_{k}\coloneqq\left\{ p_{r,Q}\coloneqq p_{r}\circ\phi_{Q}^{-1}:r=1,\ldots,\kappa,Q\in\boldsymbol{P}_{k}\right\} $
is a basis of the the subspace $\mathcal{P}_{k}\coloneqq\mathcal{P}\left(\boldsymbol{P}_{k},\s-1\right)$
of piecewise-polynomial functions which restrict on each cube $Q\in\boldsymbol{P}_{k}$
to a polynomial of degree at most $\s-1$ for each partition $\boldsymbol{P}_{k}\subset\mathcal{D}$
and with $\card\left(\mathcal{B}_{k}\right)=\kappa M_{k}$. Let us
denote the coordinate mapping by 
\begin{align*}
I_{k} & :\R^{\kappa M_{k}}\to\mathcal{P}_{k},\:\left(c_{k,Q}\right)\mapsto\sum_{Q\in\boldsymbol{P}_{k}}\sum_{r=1}^{\kappa}c_{r,Q}\widetilde{p}_{r,Q}\;\text{with}\,\,\widetilde{p}_{r,Q}\coloneqq\Lambda\left(Q\right)^{\rr/m}p_{r,Q}.
\end{align*}

Since all norms on a finite-dimensional vector space are equivalent
and by the definition of $p_{r,Q}$ we have for all $1\leq p,q\leq\infty$
uniformly in $b=\left(b_{r}\right)\in\R^{\kappa}$ and $Q\in\mathcal{D}$,
\begin{equation}
\left\Vert b\right\Vert _{\ell_{q}^{\kappa}}\asymp\left\Vert b\right\Vert _{\ell_{p}^{\kappa}}\asymp\left\Vert \sum_{r=1}^{\kappa}b_{r}p_{r}\right\Vert _{\mathcal{C}\left(\Q\right)}=\left\Vert \sum_{r=1}^{\kappa}b_{r}p_{r,Q}\right\Vert _{\mathcal{C}(Q)}.\label{eq:equivalentNorm}
\end{equation}
Next, define 
\begin{align*}
U_{k}:W^{\s,p}\left(\Q\right) & \rightarrow\mathcal{P}_{k},\;f\mapsto\sum_{Q\in\boldsymbol{P}_{k}}\1_{Q}\PP_{Q}f,
\end{align*}
where $\1_{Q}$ denotes the characteristic function on the cube $Q$
and set
\[
V_{1}\coloneqq U_{1},V_{k}\coloneqq U_{k}-U_{k-1},\;k\geq2.
\]
The following Poincaré/Wirtinger/Sobolev type inequality will be crucial
for the upper bounds derived in this section. For a reference we refer
to \parencite[Lemma 3.1]{MR0217487}.
\begin{lem}
\label{lem: approxSupNorm} For all cubes $Q\in\mathcal{D}$ and all
$u\in W^{\s,p}\left(Q\right)$ we have 
\[
\left\Vert u-\PP_{Q}u\right\Vert _{\mathcal{C}(\overline{Q})}\ll\Lambda(Q)^{\rr/m}\left\Vert u\right\Vert _{L^{\s,p}(Q)}.
\]
\end{lem}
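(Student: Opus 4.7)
The plan is to prove the inequality first on the unit cube $\Q$ and then transfer it to an arbitrary dyadic cube $Q\in\mathcal{D}$ via the affine change of variables $\phi_Q$ defined in \prettyref{eq:Def_phi_Q}. The scaling step is the easy half, so I would dispose of it first conceptually. Writing $\tilde u\coloneqq u\circ\phi_Q$, the change of variable $x=\phi_Q(y)$ and the chain rule give, for every multi-index $k$ with $|k|=\s$, $D^k\tilde u(y)=c_Q^{\s}(D^ku)(\phi_Q(y))$; combined with the Jacobian factor $c_Q^m$ this yields
\[
\left\Vert \tilde u\right\Vert _{L^{\s,p}(\Q)}=c_Q^{\s-m/p}\left\Vert u\right\Vert _{L^{\s,p}(Q)}=\Lambda(Q)^{\rr/m}\left\Vert u\right\Vert _{L^{\s,p}(Q)}.
\]
The moment defining condition \prettyref{eq:Polynomial} is preserved under affine changes of variable (as a linear change of basis on the finite dimensional space of polynomials of degree $\leq\s-1$), so $\PP_Q u=(\PP_\Q\tilde u)\circ\phi_Q^{-1}$, and the $\mathcal{C}$-norm is invariant under this rescaling, giving $\|u-\PP_Qu\|_{\mathcal{C}(\overline{Q})}=\|\tilde u-\PP_\Q\tilde u\|_{\mathcal{C}(\overline{\Q})}$. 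Combining these two identities reduces the claim to the case $Q=\Q$.

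For the inequality on the reference cube $\Q$, I would combine the Sobolev embedding with the Bramble--Hilbert (Deny--Lions) lemma. By assumption \prettyref{eq:RhoPositiv} we have $\rr=\s-m/p>0$, so the standard Sobolev embedding
\[
\left\Vert v\right\Vert _{\mathcal{C}(\overline{\Q})}\ll\left\Vert v\right\Vert _{W^{\s,p}(\Q)}
\]
holds for all $v\in W^{\s,p}(\Q)$. Since $\PP_\Q$ reproduces polynomials of degree $\leq\s-1$, we have $u-\PP_\Q u=(u-p)-\PP_\Q(u-p)$ for every such polynomial $p$. Using the boundedness of $\PP_\Q$ on $W^{\s,p}(\Q)$ (a consequence of its construction as an orthogonal projection onto a finite dimensional subspace followed by the equivalence of norms \prettyref{eq:equivalentNorm}), one obtains
\[
\left\Vert u-\PP_\Q u\right\Vert _{\mathcal{C}(\overline{\Q})}\ll\left\Vert u-p\right\Vert _{W^{\s,p}(\Q)}.
\]
Taking the infimum over all $p\in\spann\{x^k:|k|\leq\s-1\}$ and invoking the Bramble--Hilbert lemma,
\[
\inf_{p}\left\Vert u-p\right\Vert _{W^{\s,p}(\Q)}\ll\left\Vert u\right\Vert _{L^{\s,p}(\Q)},
\]
yields the desired unit cube estimate.

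The main technical obstacle is the Bramble--Hilbert step. The cleanest way to justify it is via the Deny--Lions quotient argument: on the finite codimensional quotient $W^{\s,p}(\Q)/\mathcal{P}_{\s-1}$ the seminorm $\|\cdot\|_{L^{\s,p}(\Q)}$ descends to an honest norm (its kernel in $W^{\s,p}(\Q)$ is exactly the space of polynomials of degree $\leq\s-1$), and by the compact Rellich--Kondrachov embedding this norm is equivalent to the quotient $W^{\s,p}$-norm, giving the inequality. As this is a classical Sobolev space result, I would simply cite \cite{MR0217487}, whose Lemma~3.1 is precisely of the stated form, rather than reproving it. Once both pieces are in place, combining the scaling identity for $\|\cdot\|_{L^{\s,p}}$ with the unit cube estimate gives the lemma immediately.
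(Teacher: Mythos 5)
Your proposal is correct, and it is the standard argument behind this estimate: the paper itself gives no proof but simply cites \parencite[Lemma 3.1]{MR0217487}, whose proof is exactly your combination of the affine rescaling to the reference cube (with the $c_Q^{\s-m/p}=\Lambda(Q)^{\rr/m}$ factor and the observation that $\PP_Q$ commutes with $\phi_Q$ because the moment conditions \prettyref{eq:Polynomial} are affinely invariant) together with the Sobolev embedding and the Deny--Lions/Bramble--Hilbert inequality on the unit cube. All the individual steps you state (polynomial reproduction by $\PP_\Q$, boundedness of $\PP_\Q$ via the moments and finite-dimensionality, and the quotient/compactness argument for $\inf_p\|u-p\|_{W^{\s,p}(\Q)}\ll\|u\|_{L^{\s,p}(\Q)}$) are sound and yield a constant independent of $Q$, as required.
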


For each $Q\in P_{n}$ there exists a unique element $\tilde{Q}\in P_{n-1}$
such that $Q\subset\tilde{Q}$ and for this element we write $\tilde{Q}_{Q}$.
This notation will be used in the proof of the following lemma. 
\begin{lem}
\label{lem:BasicEstimateV_kf} For $f\in W^{\s,p}$, $n\in\N$, $Q\in\mathcal{D}_{n-1}$
and $V_{n}f=\sum_{Q\in\boldsymbol{P}_{k}}\sum_{r=1}^{\kappa}c_{r,Q}p_{r,Q}$,
we have
\begin{enumerate}
\item $\left\Vert V_{n}f\right\Vert _{\mathcal{C}\left(Q\right)}\ll\Lambda\left(Q\right)^{\rr/m}\left\Vert f\right\Vert _{L^{\s,p}\left(Q\right)}$,
\item $f=\sum_{k\in\N}V_{k}f$ as an element of $L_{\nu}^{q}$, 
\item $\left\Vert \left(c_{r,Q}\right)\right\Vert _{\ell_{p}^{\kappa M_{k}}}\ll\left\Vert f\right\Vert _{L^{\s,p}}.$
\end{enumerate}
\end{lem}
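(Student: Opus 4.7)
The plan is to prove all three bounds by writing $V_{n}f$ explicitly on sub-cubes of the coarser partition, applying the local Poincaré-type estimate in \prettyref{lem: approxSupNorm}, and using the fact that the conditions in \prettyref{eq:boundOnCardinalityOFCubes} force a uniformly bounded ratio of side-lengths and a uniformly bounded number of children between $\boldsymbol{P}_{n-1}$ and $\boldsymbol{P}_{n}$. For (i), on $Q\in\boldsymbol{P}_{n-1}$ we have $U_{n-1}f|_{Q}=\PP_{Q}f$, so on each child $Q'\subset Q$ with $Q'\in\boldsymbol{P}_{n}$ one has $V_{n}f|_{Q'}=\PP_{Q'}f-\PP_{Q}f$. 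I would estimate this by the triangle inequality $\|\PP_{Q'}f-\PP_{Q}f\|_{\mathcal{C}(Q')}\leq\|f-\PP_{Q'}f\|_{\mathcal{C}(Q')}+\|f-\PP_{Q}f\|_{\mathcal{C}(Q)}$, apply \prettyref{lem: approxSupNorm} twice, and collapse the two terms to $\Lambda(Q)^{\rr/m}\|f\|_{L^{\s,p}(Q)}$ using $\Lambda(Q')\leq\Lambda(Q)$ and $\|f\|_{L^{\s,p}(Q')}\leq\|f\|_{L^{\s,p}(Q)}$; taking the supremum over the finitely many children then gives (i).

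For (ii), the telescoping identity $\sum_{k=1}^{N}V_{k}f=U_{N}f$ reduces the claim to $U_{N}f\to f$ in $L_{\nu}^{q}$. A cube-by-cube application of \prettyref{lem: approxSupNorm} combined with $\J_{\r}(Q)<t_{N}$ on $P_{t_{N}}$ yields
\[
\|f-U_{N}f\|_{L_{\nu}^{q}}^{q}\ll\sum_{Q\in\boldsymbol{P}_{N}}\J_{\r}(Q)\|f\|_{L^{\s,p}(Q)}^{q}\leq t_{N}\sum_{Q\in\boldsymbol{P}_{N}}\|f\|_{L^{\s,p}(Q)}^{q}.
\]
When $q\geq p$, I would estimate $\|f\|_{L^{\s,p}(Q)}^{q}\leq\|f\|_{L^{\s,p}}^{q-p}\|f\|_{L^{\s,p}(Q)}^{p}$ and sum by disjointness to obtain $t_{N}\|f\|_{L^{\s,p}}^{q}\to 0$. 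For $q<p<\infty$, I would route through the continuous embedding $L_{\nu}^{p}\hookrightarrow L_{\nu}^{q}$ (valid since $\nu$ is finite) and run the same argument with $p$ in place of $q$, noting that $\J_{p\rr}(Q)\leq\J_{q\rr}(Q)<t_{N}$ because $\Lambda(Q)\leq 1$ and $p\rr\geq q\rr$. For $q=\infty$, $\boldsymbol{P}_{N}=\mathcal{D}_{n_{N}}$ consists of uniform cubes of side $2^{-n_{N}}$, and \prettyref{lem: approxSupNorm} gives $\|f-U_{N}f\|_{L_{\nu}^{\infty}}\ll 2^{-n_{N}\rr}\|f\|_{L^{\s,p}}\to 0$.

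For (iii), the finite-dimensional norm equivalence \prettyref{eq:equivalentNorm} gives $\sum_{r=1}^{\kappa}|c_{r,Q'}|^{p}\asymp\|V_{n}f\|_{\mathcal{C}(Q')}^{p}$ for each $Q'\in\boldsymbol{P}_{n}$. Combining this with the cube-wise estimate from (i) applied on the parent $\tilde{Q}_{Q'}\in\boldsymbol{P}_{n-1}$ and using $\Lambda(Q')\leq 1$ yields $\sum_{r}|c_{r,Q'}|^{p}\ll\|f\|_{L^{\s,p}(\tilde{Q}_{Q'})}^{p}$; summing over $Q'\in\boldsymbol{P}_{n}$ and invoking \prettyref{eq:boundOnCardinalityOFCubes} to bound the number of children per parent by a constant collapses this telescopically to $\|f\|_{L^{\s,p}}^{p}$, proving (iii). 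The main obstacle throughout is keeping rigorous track of the interaction between the two partition levels, namely the nesting of $(\boldsymbol{P}_{k})$ and the uniform control of child-to-parent side-length ratios and child multiplicities; both are precisely what \prettyref{eq:boundOnCardinalityOFCubes} is designed to provide. A secondary subtlety is the case $q<p$ in (ii), where passing through $L_{\nu}^{p}\hookrightarrow L_{\nu}^{q}$ and exploiting the monotonicity $\J_{p\rr}\leq\J_{q\rr}$ allows the same line of argument to go through without a separate estimate.
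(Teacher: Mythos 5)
Your arguments for (1) and (3) follow the paper's own proof essentially verbatim: the triangle inequality through $f$ combined with \prettyref{lem: approxSupNorm} on parent and children, and, for (3), the finite-dimensional norm equivalence \prettyref{eq:equivalentNorm} together with part (1) and the bounded child-multiplicity from \prettyref{eq:boundOnCardinalityOFCubes}. The only genuine divergence is in part (2) for $q<p$. The paper bounds $\bigl(\sum_{Q\in P_{t_N}}\Vert f\Vert_{L^{\s,p}(Q)}^{q}\bigr)^{1/q}\leq M_{N}^{(1/q-1/p)_{+}}\Vert f\Vert_{L^{\s,p}}$ via \prettyref{eq:H=0000F6lderFiniteDim} and then kills the factor $t_{N}^{1/q}M_{N}^{(1/q-1/p)_{+}}$ by invoking the cardinality asymptotics \prettyref{eq:partition_entropy} and the negativity observed in \prettyref{rem:Note_Upper_bound for q<p}; you instead pass through $L_{\nu}^{p}\hookrightarrow L_{\nu}^{q}$ and use the elementary monotonicity $\J_{p\rr}\leq\J_{q\rr}<t_{N}$ on the partition cubes. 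Your route is self-contained and avoids any appeal to the growth rate of $M_N$ or to $s_{\r}$, which is a mild simplification; the paper's route has the advantage of treating all exponents in one display.

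One small hole: your case split in (2) is ``$q\geq p$'', ``$q<p<\infty$'', ``$q=\infty$'', which leaves out $q<\infty$, $p=\infty$ (admissible under \prettyref{eq:RhoPositiv}, where $\rr=\s$). Your embedding trick still works there --- $L_{\nu}^{\infty}\hookrightarrow L_{\nu}^{q}$ and $\Vert f-U_{N}f\Vert_{L_{\nu}^{\infty}}\ll\max_{Q\in P_{t_N}}\Lambda(Q)^{\rr/m}\Vert f\Vert_{L^{\s,\infty}}$ --- but you then need the additional observation that $\max_{Q\in P_{t}}\Lambda(Q)\to0$ as $t\downarrow0$ (which holds because each $\mathcal{D}_{n}$ is finite and every $Q\in\mathcal{D}$ has $\nu(Q)>0$, hence $\J(Q)>0$). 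The paper's uniform $M_{N}^{(1/q-1/p)_{+}}$ bound covers this case without further comment. Please close this corner case explicitly.
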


\begin{proof}
\emph{ad (1):} With $f\in W^{\s,p}$ and $Q\in\mathcal{D}_{n-1}$
we have by \prettyref{lem: approxSupNorm} 
\begin{align*}
\left\Vert V_{n}f\right\Vert _{\mathcal{C}\left(Q\right)} & \leq\left\Vert U_{n}f-f\right\Vert _{\mathcal{C}\left(Q\right)}+\left\Vert f-U_{n-1}f\right\Vert _{\mathcal{C}\left(Q\right)}\\
 & =\max_{P_{n}\ni Q'\subset Q}\left\Vert P_{Q'}f-f\right\Vert _{\mathcal{C}\left(Q'\right)}+\left\Vert f-\PP_{Q}f\right\Vert _{\mathcal{C}\left(Q\right)}\\
 & \ll\Lambda\left(Q\right)^{\rr/m}\left(\max_{P_{n}\ni Q'\subset Q}\left\Vert f\right\Vert _{L^{\s,p}\left(Q'\right)}+\left\Vert f\right\Vert _{L^{\s,p}\left(Q\right)}\right)\\
 & \leq\begin{cases}
\Lambda\left(Q\right)^{\rr/m}\left(\left\Vert f\right\Vert _{L^{\s,\infty}\left(Q\right)}+\left\Vert f\right\Vert _{L^{\s,\infty}\left(Q\right)}\right), & \text{for }p=\infty,\\
\Lambda\left(Q\right)^{\rr/m}\left(\left(\sum_{P_{n}\ni Q'\subset Q}\left\Vert f\right\Vert _{L^{\s,p}\left(Q'\right)}^{p}\right)^{1/p}+\left\Vert f\right\Vert _{L^{\s,p}\left(Q\right)}\right), & \text{for }p<\infty
\end{cases}\\
 & \ll\Lambda\left(Q\right)^{\rr/m}\left\Vert f\right\Vert _{L^{\s,p}\left(Q\right)}
\end{align*}
where for $p<\infty$ we used \prettyref{eq:boundOnCardinalityOFCubes}.
This shows the first claim. 

\emph{ad (2):} We first consider $q<\infty$. Using \prettyref{lem: approxSupNorm}
and the upper bound discussed in \prettyref{rem:Note_Upper_bound for q<p},
we find
\begin{align*}
\lim_{n\to\infty}\left\Vert f-\sum_{k=1}^{n}V_{k}f\right\Vert _{L_{\nu}^{q}} & =\lim_{n\to\infty}\left(\int_{\Q}\left\vert f-U_{n}f\right\vert ^{q}\d\nu\right)^{1/q}\leq\lim_{n\to\infty}\left(\sum_{Q\in P_{n}}\nu\left(Q\right)\left\Vert f-\PP_{Q}f\right\Vert _{\mathcal{C}\left(Q\right)}^{q}\right)^{1/q}\\
 & \ll\lim_{n\to\infty}\left(\max_{Q\in P_{n}}\J_{\r}\left(Q\right)\right)^{1/q}\left(\sum_{Q\in P_{n}}\left\Vert f\right\Vert _{L^{\s,p}\left(Q\right)}^{q}\right)^{1/q}\\
 & \leq\lim_{n\to\infty}\left(\max_{Q\in P_{n}}\J_{\r}\left(Q\right)\right)^{1/q}M_{n}^{\left(1/q-1/p\right)_{+}}\left\Vert f\right\Vert _{L^{\s,p}\left(Q\right)}=0.
\end{align*}
Note that for $x\in\R^{M}$ and $p\leq q$, we used the well-known
chain of inequalities 
\begin{equation}
\left\Vert x\right\Vert _{\ell_{q}^{M}}\leq\left\Vert x\right\Vert _{\ell_{p}^{M}}\leq M^{1/p-1/q}\left\Vert x\right\Vert _{\ell_{p}^{M}}.\label{eq:H=0000F6lderFiniteDim}
\end{equation}
Similarly, for $q=\infty$, we get 
\begin{align*}
\lim_{n\to\infty}\left\Vert f-\sum_{k=1}^{n}V_{k}f\right\Vert _{L_{\nu}^{\infty}} & =\lim_{n\to\infty}\left\Vert f-U_{n}f\right\Vert _{L_{\nu}^{\infty}}\leq\lim_{n\to\infty}\max_{Q\in P_{n}}\left\Vert f-\PP_{Q}f\right\Vert _{\mathcal{C}\left(Q\right)}\\
 & \ll\lim_{n\to\infty}\max_{Q\in P_{n}}\Lambda\left(Q\right)^{\rr/m}\left\Vert f\right\Vert _{L^{\s,p}\left(Q\right)}\\
 & \leq\begin{cases}
\lim_{n\to\infty}2^{-n\rr}\left\Vert f\right\Vert _{L^{\s,\infty}}, & \text{for }p=\infty,\\
\lim_{n\to\infty}2^{-n\rr}\left(\sum_{Q\in\mathcal{D}_{n}}\left\Vert f\right\Vert _{L^{\s,p}\left(Q\right)}^{p}\right)^{1/p}, & \text{for }p<\infty
\end{cases}\\
 & =\lim_{n\to\infty}2^{-n\rr}\left\Vert f\right\Vert _{L^{\s,p}}=0.
\end{align*}
 This proves the second claim. 

\emph{ad (3):} Using \prettyref{eq:boundOnCardinalityOFCubes} and
\prettyref{eq:equivalentNorm}, we have 
\begin{align*}
\left\Vert c\right\Vert _{\ell_{p}^{\kappa M_{k}}} & =\left(\sum_{Q\in\boldsymbol{P}_{k}}\left(\left(\sum_{r=1}^{\kappa}\left|c_{r,Q}\right|^{p}\right)^{1/p}\right)^{p}\right)\ll\left(\sum_{Q\in\boldsymbol{P}_{k}}\left\Vert \sum_{r=1}^{\kappa}c_{r,Q}p_{r,Q}\right\Vert _{\mathcal{C}(Q)}^{p}\right)^{1/p}\\
 & \ll\left(\sum_{Q\in\boldsymbol{P}_{k}}\left\Vert \Lambda\left(Q\right)^{-\s/m-1/p}\sum_{r=1}^{\kappa}c_{r,Q}\tilde{p}_{r,Q}\right\Vert _{\mathcal{C}\left(Q\right)}^{p}\right)^{1/p}\\
 & =\left(\sum_{Q\in\boldsymbol{P}_{k}}\left(\Lambda\left(Q\right)^{-\s/m-1/p}\left\Vert V_{k}f\right\Vert _{\mathcal{C}\left(\tilde{Q}_{Q}\right)}\right)^{p}\right)^{1/p}\ll\left(\sum_{Q\in\boldsymbol{P}_{k}}\left\Vert f\right\Vert _{L^{\s,p}\left(\tilde{Q}_{Q}\right)}^{p}\right)^{1/p}\ll\left\Vert f\right\Vert _{L^{\s,p}}.
\end{align*}
\end{proof}
For the following key observation we use the shorthand notation
\[
T_{k}\coloneqq\begin{cases}
t_{k}^{1/q}, & \text{for }q<\infty,\\
2^{-k\rr}, & \text{for }q=\infty.
\end{cases}
\]

\begin{prop}[Discretisation – upper bound]
\label{prop:UpperBound_Discretisation} For $\star\in\left\{ K,G,L\right\} $,
assuming \prettyref{eq:boundOnCardinalityOFCubes} and $\left(n_{k}\right)\in\N_{0}^{\N}$
with $\sum_{k\in\N}n_{k}=n\in\N$, we have 
\[
d_{n}^{\star}\left(\mathscr{B}W^{\s,p},L_{\nu}^{q}\right)\ll\sum_{k}T_{k}d_{n_{k}}^{\star}\left(\b_{p}^{\kappa M_{k}},\ell_{q}^{\kappa M_{k}}\right).
\]
\end{prop}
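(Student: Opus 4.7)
The plan is to run the standard discretisation argument: use the multiscale decomposition $f = \sum_{k} V_k f$ from \prettyref{lem:BasicEstimateV_kf}(2) together with the coordinate maps $I_k$ to reduce, level by level, the approximation of $f$ in $L_\nu^q$ to the approximation of its coefficient vector $c_k(f) \in \R^{\kappa M_k}$ (defined by $V_k f = I_k(c_k(f))$) in $\ell_q^{\kappa M_k}$, and then to assemble the level-wise approximants so that the total budget is $\sum_k n_k = n$. The map $f \mapsto c_k(f)$ is linear and, by \prettyref{lem:BasicEstimateV_kf}(3), satisfies the uniform bound $\|c_k(f)\|_{\ell_p^{\kappa M_k}} \ll \|f\|_{L^{\s,p}}$.

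The heart of the proof is the key discretisation estimate
\[
\|I_k(c)\|_{L_\nu^q} \ll T_k \, \|c\|_{\ell_q^{\kappa M_k}}, \qquad c \in \R^{\kappa M_k}.
\]
For $q<\infty$, on each $Q \in \boldsymbol{P}_k = P_{t_k}$ we have $I_k(c)|_Q = \Lambda(Q)^{\rr/m}\sum_{r} c_{r,Q} p_{r,Q}$, so the norm equivalence~(\ref{eq:equivalentNorm}) gives $\|I_k(c)\|_{\mathcal{C}(Q)} \asymp \Lambda(Q)^{\rr/m}\|(c_{r,Q})_r\|_{\ell_q^\kappa}$. Using $\r = q\rr$ together with the defining inequality $\J_\r(Q) < t_k$ for $Q \in P_{t_k}$ from~(\ref{eq:partition}), this yields
\[
\|I_k(c)\|_{L_\nu^q}^q \leq \sum_{Q \in P_{t_k}} \nu(Q) \|I_k(c)\|_{\mathcal{C}(Q)}^q \asymp \sum_{Q} \J_\r(Q)\,\|(c_{r,Q})_r\|_{\ell_q^\kappa}^q < t_k \|c\|_{\ell_q^{\kappa M_k}}^q = T_k^q \|c\|_{\ell_q^{\kappa M_k}}^q.
\]
For $q=\infty$, since all cubes in $\boldsymbol{P}_k$ share a common volume, the analogous estimate falls out directly from (\ref{eq:equivalentNorm}) and the definition of $T_k$.

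Now choose $K$ so that $n_k = 0$ for all $k>K$, and assemble the approximant in each of the three cases. For $\star=K$, take near-optimal subspaces $W_k<_{n_k}\ell_q^{\kappa M_k}$ realising $d_{n_k}^K(\b_p^{\kappa M_k},\ell_q^{\kappa M_k})$, form $W \coloneqq \sum_{k \leq K} I_k(W_k)$ of dimension $\leq n$, pick $w_k \in W_k$ with $\|c_k(f)-w_k\|_{\ell_q^{\kappa M_k}} \ll d_{n_k}^K \|c_k(f)\|_{\ell_p^{\kappa M_k}}$, and bound $\|f-\sum_{k\leq K}I_k(w_k)\|_{L_\nu^q}$ by the triangle inequality, applying the key estimate to each $I_k(c_k(f)-w_k)$ and absorbing the tail $\sum_{k>K}\|V_k f\|_{L_\nu^q}$ into the trivial $T_k d_0^K$-terms on the right-hand side. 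For $\star=L$, replace $W_k$ by near-optimal $B_k \in \mathscr{L}_{n_k}(\ell_q^{\kappa M_k})$ and use the operator $Bf \coloneqq \sum_{k \leq K} I_k(B_k c_k(f))$, which is well-defined and of rank $\leq n$ by linearity of $f \mapsto c_k(f)$. For $\star=G$, pick near-optimal $\phi_k : \ell_p^{\kappa M_k} \to \R^{n_k}$ and take $U \coloneqq \{f \in W^{\s,p} : \phi_k(c_k(f)) = 0 \text{ for all } k \leq K\}$, a subspace of codimension $\leq n$; for $f \in U$ the constraints force $\|c_k(f)\|_{\ell_q^{\kappa M_k}} \ll d_{n_k}^G \|c_k(f)\|_{\ell_p^{\kappa M_k}}$ on each level $k \leq K$, and the same triangle argument then closes the bound.

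The main obstacle is the key discretisation estimate in the second paragraph: this is where the scaling $\tilde{p}_{r,Q} = \Lambda(Q)^{\rr/m} p_{r,Q}$, the defining inequality of $P_{t_k}$, and the identity $\r = q\rr$ conspire to convert the measure-geometric weight $\J_\r(Q)$ into the clean uniform factor $T_k^q$. Once this is in hand, the three assembly arguments are essentially variants of the same routine template, differing only in whether the finite-dimensional approximation is chosen in the image space (Kolmogorov), as an endomorphism (linear), or as a family of constraints on the domain (Gel$\cprime$fand).
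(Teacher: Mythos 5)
Your proposal is correct and follows essentially the same route as the paper: the same key estimate $\left\Vert I_{k}(c)\right\Vert _{L_{\nu}^{q}}\ll T_{k}\left\Vert c\right\Vert _{\ell_{q}^{\kappa M_{k}}}$ derived from \prettyref{eq:equivalentNorm}, $\r=q\rr$, and $\J_{\r}(Q)<t_{k}$ for $Q\in P_{t_{k}}$, followed by the same three level-wise assembly schemes (lifted subspaces for $K$, conjugated finite-rank operators for $L$, pulled-back functionals for $G$) and the triangle inequality over $f=\sum_{k}V_{k}f$. The only point worth making explicit in the Gel{\cprime}fand case is that your $U$ is a finite-codimension subspace of $W^{\s,p}$ rather than of $L_{\nu}^{q}$, so one should invoke \prettyref{lem:-Gelfand_extension} to replace $L_{\nu}^{q}$ by $\left(W^{\s,p},\left\Vert \,\cdot\,\right\Vert _{L_{\nu}^{q}}\right)$, exactly as the paper does.
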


\begin{proof}
First, we consider $\star=L$. By the definition of $d_{k}^{L}\coloneqq d_{n_{k}}^{L}\left(\b_{p}^{\kappa M_{k}},\ell_{q}^{\kappa M_{k}}\right)$,
for each $k\in\N$, there exists a linear operator $R_{k}\in\mathscr{L}\left(\ell_{q}^{\kappa M_{k}},n_{k}\right)$
on $\ell_{q}^{^{\kappa M_{k}}}$ of rank at most $n_{k}$ with
\[
\left\Vert a-R_{k}a\right\Vert _{\ell_{q}^{\kappa M_{k}}}\leq d_{k}^{L}\left\Vert a\right\Vert _{\ell_{p}^{\kappa M_{k}}}
\]
for all $a\in\ell_{p}^{M_{k}}$ and uniformly in $k$. Hence, with
$\mathcal{R}_{k}\coloneqq I_{k}R_{k}I_{k}^{-1}\in\mathscr{L}_{n_{k}}\left(\mathcal{P}_{k}\right)$,
we find with \prettyref{lem:BasicEstimateV_kf} for $\varphi\in W^{\s,p}$
and $\left(c_{r,Q}\right)\in\ell_{p}^{\kappa M_{k}}$ such that $V_{k}\varphi=\sum_{Q}\sum_{r}c_{r,Q}\tilde{p}_{r,Q}\in\mathcal{P}_{k}$,
and for $q<\infty$,

\begin{align*}
 & \!\!\!\!\left\Vert V_{k}\varphi-\mathcal{R}_{k}V_{k}\varphi\right\Vert _{L_{\nu}^{q}}^{q}\\
 & =\left\Vert \left(\text{Id}-I_{k}R_{k}I_{k}^{-1}\right)\sum_{Q\in\boldsymbol{P}_{k}}\sum_{r=1}^{\kappa}c_{r,Q}\tilde{p}_{r,Q}\right\Vert _{L_{\nu}^{q}}^{q}=\left\Vert \left(\sum_{Q\in\boldsymbol{P}_{k}}\sum_{r=1}^{\kappa}\left(c_{r,Q}-\left(R_{k}c\right)_{r,Q}\right)\tilde{p}_{r,Q}\right)\right\Vert _{L_{\nu}^{q}}^{q}\\
 & =\sum_{Q\in\boldsymbol{P}_{k}}\int_{Q}\left|\sum_{r=1}^{\kappa}\left(c_{r,Q}-\left(R_{k}c\right)_{r,Q}\right)\tilde{p}_{r,Q}\right|^{q}\d\nu\leq\sum_{Q\in\boldsymbol{P}_{k}}\nu\left(Q\right)\left\Vert \sum_{r=1}^{\kappa}\left(c_{r,Q}-\left(R_{k}c\right)_{r,Q}\right)\tilde{p}_{r,Q}\right\Vert _{\mathcal{C}(Q)}^{q}\\
 & =\sum_{Q\in\boldsymbol{P}_{k}}\nu\left(Q\right)\Lambda\left(Q\right)^{\r/m}\left\Vert \sum_{r=1}^{\kappa}\left(c_{r,Q}-\left(R_{k}c\right)_{r,Q}\right)p_{r,Q}\right\Vert _{\mathcal{C}(Q)}^{q}\\
 & \leq\max_{Q\in\boldsymbol{P}_{k}}\J_{\r}\left(Q\right)\sum_{Q\in\boldsymbol{P}_{k}}\left\Vert \sum_{r=1}^{\kappa}\left(c_{r,Q}-\left(R_{k}c\right)_{r,Q}\right)p_{r,Q}\right\Vert _{\mathcal{C}(Q)}^{q}\\
 & \ll t_{k}\sum_{Q\in\boldsymbol{P}_{k}}\left\Vert \left(c_{r,Q}-\left(R_{k}c\right)_{r,Q}\right)_{r=1}^{\kappa}\right\Vert _{\ell_{q}^{\kappa}}^{q}\leq t_{k}\left(d_{k}^{L}\left\Vert c\right\Vert _{\ell_{p}^{\kappa M_{k}}}\right)^{q}\ll t_{k}\left(d_{k}^{L}\left\Vert \varphi\right\Vert _{L^{\s,p}}\right)^{q}.
\end{align*}
For $q=\infty$ we get
\begin{align*}
 & \!\!\!\!\left\Vert V_{k}\varphi-\mathcal{R}_{k}V_{k}\varphi\right\Vert _{L_{\nu}^{\infty}}\\
 & =\left\Vert \sum_{Q\in\boldsymbol{P}_{k}}\sum_{r=1}^{\kappa}c_{r,Q}\tilde{p}_{r,Q}-I_{k}R_{k}I_{k}^{-1}\sum_{Q\in\boldsymbol{P}_{k}}\sum_{r=1}^{\kappa}c_{r,Q}\tilde{p}_{r,Q}\right\Vert _{L_{\nu}^{\infty}}=\left\Vert \left(\sum_{Q\in\boldsymbol{P}_{k}}\sum_{r=1}^{\kappa}\left(c_{r,Q}-\left(R_{k}c\right)_{r,Q}\right)\tilde{p}_{r,Q}\right)\right\Vert _{L_{\nu}^{\infty}}\\
 & \leq\max_{Q\in\boldsymbol{P}_{k}}\left\Vert \sum_{r=1}^{\kappa}\left(c_{r,Q}-\left(R_{k}c\right)_{r,Q}\right)\tilde{p}_{r,Q}\right\Vert _{\mathcal{C}(Q)}\leq\max_{Q\in\boldsymbol{P}_{k}}\Lambda\left(Q\right)^{\rr/m}\left\Vert \sum_{r=1}^{\kappa}\left(c_{r,Q}-\left(R_{k}c\right)_{r,Q}\right)p_{r,Q}\right\Vert _{\mathcal{C}(Q)}\\
 & =2^{-k\rr}\max_{Q\in\boldsymbol{P}_{k}}\left\Vert \sum_{r=1}^{\kappa}\left(c_{r,Q}-\left(R_{k}c\right)_{r,Q}\right)p_{r,Q}\right\Vert _{\mathcal{C}(Q)}\leq2^{-k\rr}\max_{Q\in\boldsymbol{P}_{k}}\left\Vert \left(c_{r,Q}-\left(R_{k}c\right)_{r,Q}\right)_{r=1}^{\kappa}\right\Vert _{\ell_{\infty}^{\kappa}}\\
 & =2^{-k\rr}\left\Vert c-R_{k}c\right\Vert _{\ell_{\infty}^{\kappa M_{k}}}\leq2^{-k\r}d_{k}^{L}\left\Vert c\right\Vert _{\ell_{p}^{\kappa M_{k}}}\ll2^{-k\r}d_{k}^{L}\left\Vert \varphi\right\Vert _{L^{\s,p}}
\end{align*}
using \prettyref{lem:BasicEstimateV_kf} and \prettyref{eq:equivalentNorm}.
Now we define the linear operator $\mathcal{Q}_{n}\coloneqq\sum_{k\in\N}\mathcal{R}_{k}V_{k}$,
which is of rank at most $n=\sum n_{k}$ and conclude for $q<\infty$
\begin{align*}
d_{n}^{L}\left(\mathscr{B}W^{\s,p},L_{\nu}^{q}\right) & =\inf_{B\in\mathscr{L}_{n}\left(L_{\nu}^{q}\right)}\sup_{\varphi\in\B}\left\Vert \varphi-B\varphi\right\Vert _{L_{\nu}^{q}}\leq\sup_{\varphi\in\B}\left\Vert \varphi-\mathcal{Q}_{n}\varphi\right\Vert _{L_{\nu}^{q}}\\
 & =\sup_{\varphi\in\B}\left\Vert \sum_{k\in\N}V_{k}\varphi-\sum_{k\in\N}\mathcal{R}_{k}V_{k}\varphi\right\Vert _{L_{\nu}^{q}}\leq\sup_{\varphi\in\B}\sum_{k\in\N}\left\Vert V_{k}\varphi-\mathcal{R}_{k}V_{k}\varphi\right\Vert _{L_{\nu}^{q}}\\
 & \ll\sum_{k\in\N}t_{k}^{1/q}d_{n_{k}}^{L}\left(\b_{p}^{\kappa M_{k}},\ell_{q}^{\kappa M_{k}}\right)
\end{align*}
and in the same way for $q=\infty$
\begin{align*}
d_{n}^{L}\left(\mathscr{B}W^{\s,p},L_{\nu}^{\infty}\right) & \ll\sum_{k\in\N}2^{-k\rr}d_{n_{k}}^{L}\left(\b_{p}^{\kappa M_{k}},\ell_{\infty}^{\kappa M_{k}}\right).
\end{align*}

Now, for $\star=K$, by the definition of $d_{k}^{K}\coloneqq d_{n_{k}}^{K}\left(\b_{p}^{\kappa M_{k}},\ell_{q}^{\kappa M_{k}}\right)$,
we note that for each $k\in\N$ and $n_{k}\in\N_{0}$ there exists
an at most $n_{k}$-dimensional subspace $X_{k}<_{n_{k}}\ell_{q}^{\kappa M_{k}}$
such that for each $a\in\ell_{p}^{\kappa M_{k}}$ we find $b\in X_{k}$
with 
\begin{equation}
\left\Vert a-b\right\Vert _{\ell_{q}^{\kappa M_{k}}}\leq d_{k}^{K}\left\Vert a\right\Vert _{\ell_{p}^{\kappa M_{k}}}.\label{eq:KolmogorovFiniteDimIneq}
\end{equation}
Now, with $\tilde{\mathcal{X}}_{k}\coloneqq\left\{ \sum_{Q\in\boldsymbol{P}_{k}}b_{r,Q}\tilde{\varphi}_{r,Q}:\left(b_{r,Q}\right)\in X_{k}\right\} <_{n_{k}}L_{\nu}^{q}$
we have that the subspace $\mathcal{X}_{n}\coloneqq\spann\left(\mathcal{\tilde{X}}_{k}:k\in\N\right)$
is at most of dimension $n=\sum_{k\in\N}n_{k}$ and 
\[
d_{n}^{K}\left(\mathscr{B}W^{\s,p},L_{\nu}^{q}\right)=\inf_{W<_{n}L_{\nu}^{q}}\sup_{\varphi\in\mathscr{B}W^{\s,p}}\inf_{\psi\in W}\left\Vert \varphi-\psi\right\Vert _{L_{\nu}^{q}}\leq\sup_{\varphi\in\mathscr{B}W^{\s,p}}\inf_{\psi\in\mathcal{X}_{n}}\left\Vert \varphi-\psi\right\Vert _{L_{\nu}^{q}}.
\]
To find an upper bound, we fix an arbitrary $\varphi\in\mathscr{B}W^{\s,p}$
and $c=\left(c_{r,Q}\right)\in\ell_{q}^{\kappa M_{k}}$ such that
$V_{k}\varphi=\sum_{Q\in\boldsymbol{P}_{k}}\sum_{r=1}^{\kappa}c_{r,Q}\tilde{p}_{r,Q}\in\mathcal{P}_{k}.$
Observe that for $c\in\ell_{q}^{\kappa M_{k}}$ we find $\left(d_{r,Q}\right)\in X_{k}$
such that \prettyref{eq:KolmogorovFiniteDimIneq} holds. Thus, with
$\psi_{k}=\sum_{Q\in\boldsymbol{P}_{k}}\sum_{r}d_{r,Q}\varphi_{r,Q}\in\tilde{\mathcal{X}}_{k}$
and \prettyref{lem:BasicEstimateV_kf} we conclude  for $q<\infty$
\begin{align*}
\left\Vert V_{k}\varphi-\psi_{k}\right\Vert _{L_{\nu}^{q}} & =\left\Vert \sum_{Q\in\boldsymbol{P}_{k}}\sum_{r}\left(c_{r,Q}-d_{r,Q}\right)\tilde{p}_{r,Q}\right\Vert _{L_{\nu}^{q}}\\
 & \leq\max_{Q\in\boldsymbol{P}_{k}}\J\left(Q\right)^{1/q}\left(\sum_{Q\in\boldsymbol{P}_{k}}\left\Vert \sum_{r=1}^{\kappa}\left(c_{r,Q}-d_{r,Q}\right)p_{r,Q}\right\Vert _{\mathcal{C}(Q)}^{q}\right)^{1/q}\\
 & \ll t_{k}^{1/q}\left(\sum_{Q\in\boldsymbol{P}_{k}}\sum_{r}\left\vert c_{r,Q}-d_{r,Q}\right\vert ^{q}\right)^{1/q}\leq t_{k}^{1/q}d_{k}^{K}\left\Vert c_{r,Q}\right\Vert _{\ell_{p}^{\kappa M_{k}}}\ll t_{k}^{1/q}d_{k}^{K}\left\Vert \varphi\right\Vert _{L^{\s,p}}.
\end{align*}
For $q=\infty$ we have
\begin{align*}
\left\Vert V_{k}\varphi-\psi_{k}\right\Vert _{L_{\nu}^{q}} & =\left\Vert \sum_{Q\in\boldsymbol{P}_{k}}\sum_{r}\left(c_{r,Q}-d_{r,Q}\right)\tilde{p}_{r,Q}\right\Vert _{L_{\nu}^{\infty}}\leq\max_{Q\in\boldsymbol{P}_{k}}\Lambda\left(Q\right)^{\rr/m}\left\Vert \sum_{r=1}^{\kappa}\left(c_{r,Q}-d_{r,Q}\right)p_{r,Q}\right\Vert _{\mathcal{C}(Q)}\\
 & \ll2^{-k\rr}\left(\max_{Q\in\boldsymbol{P}_{k}}\max_{r}\left\vert c_{r,Q}-d_{r,Q}\right\vert \right)=2^{-k\rr}\left\Vert c_{r,Q}-d_{r,Q}\right\Vert _{\ell_{\infty}^{\kappa M_{k}}}\\
 & \leq2^{-k\rr}d_{k}^{K}\left\Vert c_{r,Q}\right\Vert _{\ell_{p}^{\kappa M_{k}}}\ll2^{-k\rr}d_{k}^{K}\left\Vert \varphi\right\Vert _{L^{\s,p}}.
\end{align*}

Since for each $\varphi=\sum_{k}V_{k}\varphi$ we find such a sequence
$\left(\psi_{k}\right)_{k\in\N}$ with $\psi_{k}\in\tilde{\mathcal{X}}_{k}$
and $\sum_{k}\psi_{k}\in\mathcal{X}_{n}$, it follows for the Kolmogorov
widths for $1\leq q\leq\infty$

\begin{align*}
d_{n}^{K}\left(\mathscr{B}W^{\s,p},L_{\nu}^{q}\right) & \leq\sup_{\varphi\in\mathscr{B}W^{\s,p}}\left\Vert \sum_{k\in\N}V_{k}\varphi-\sum_{k\in\N}\psi_{k}\right\Vert _{L_{\nu}^{q}}\leq\sup_{\varphi\in\mathscr{B}W^{\s,p}}\sum_{k\in\N}\left\Vert V_{k}\varphi-\psi_{k}\right\Vert _{L_{\nu}^{q}}\\
 & \ll\sup_{\varphi\in\mathscr{B}W^{\s,p}}\sum_{k\in\N}T_{k}d_{k}^{K}\left\Vert \varphi\right\Vert _{L^{\s,p}}=\sum_{k\in\N}T_{k}d_{n_{k}}^{K}\left(\b_{p}^{\kappa M_{k}},\ell_{q}^{\kappa M_{k}}\right).
\end{align*}

For $\star=G$, first note that by \prettyref{lem:-Gelfand_extension}
we may consider $\left(W^{\s,p},\left\Vert \,\cdot\,\right\Vert _{L_{\nu}^{q}}\right)$
instead of $L_{\nu}^{q}$ in the definition of the Gel{\cprime}fand
widths. By the definition of $d_{k}^{G}\coloneqq d_{n_{k}}^{G}\left(\b_{p}^{\kappa M_{k}},\ell_{q}^{\kappa M_{k}}\right)$,
$k\in\N$, we find linear forms $e_{1}^{k},\ldots,e_{n_{k}}^{k}\in\left(\ell_{q}^{\kappa M_{k}}\right)'$
such that for all 
\[
b\in\left\{ a\in\ell_{q}^{\kappa M_{k}}:e_{i}^{k}\left(a\right)=0,i=1,\ldots,n_{k}\right\} <^{n_{k}}\ell_{q}^{\kappa M_{k}}
\]
 we have
\[
\left\Vert b\right\Vert _{\ell_{q}^{\kappa M_{k}}}\leq d_{k}^{G}\left\Vert b\right\Vert _{\ell_{p}^{\kappa M_{k}}}.
\]
Now, for $i=1,\ldots,n_{k}$ and $k\in\N$, we define linear functionals
on $W^{\s,p}$ by
\[
\tilde{e}_{i}^{k}\coloneqq e_{i}^{k}\circ I_{k}^{-1}\circ V_{k}:W^{\s,p}\to\R.
\]
This gives rise to the subspace 
\[
\mathcal{X}_{n}\coloneqq\left\{ \varphi\in W^{\s,p}:\tilde{e}_{i}^{k}\varphi=0,i=1,\ldots,n_{k},k\in\N\right\} <^{n}L_{\nu}^{q}\cap W^{\s,p}
\]
of co-dimension at most $n\coloneqq\sum n_{k}$. Thus, for $\varphi\in\mathcal{X}_{n}$
with $V_{k}\varphi=\sum_{Q\in\boldsymbol{P}_{k}}\sum_{r=1}^{\kappa}c_{r,Q}\tilde{p}_{r,Q}$,
$k\in\N$, we have 
\[
0=\tilde{e}_{i}^{k}\varphi=e_{i}^{k}\circ I_{k}^{-1}\circ V_{k}\varphi=e_{i}^{k}\circ I_{k}^{-1}\left(\sum_{Q\in\boldsymbol{P}_{k}}\sum_{r=1}^{\kappa}c_{r,Q}\tilde{p}_{r,Q}\right)=e_{i}^{k}\left(c_{r,Q}\right)_{r,Q},
\]
which gives, as in the previous estimations for $\star=L,K$, 
\[
\left\Vert V_{k}\varphi\right\Vert _{L_{\nu}^{q}}\ll T_{k}\left\Vert \left(c_{r,Q}\right)\right\Vert _{\ell_{q}^{\kappa M_{k}}}\leq T_{k}d_{k}^{G}\left\Vert \left(c_{r,Q}\right)\right\Vert _{\ell_{p}^{\kappa M_{k}}}\ll T_{k}d_{k}^{G}\left\Vert \varphi\right\Vert _{L^{\s,p}}.
\]
The claim follows then for the Gel{\cprime}fand widths by observing
\begin{align*}
d_{n}^{G}\left(\mathscr{B}W^{\s,p},L_{\nu}^{q}\right) & =\inf_{W<^{n}W^{\s,p}}\sup_{\varphi\in\mathscr{B}W^{\s,p}\cap W}\left\Vert \varphi\right\Vert _{L_{\nu}^{q}}=\inf_{W<^{n}W^{\s,p}}\sup_{\varphi\in\mathscr{B}W^{\s,p}\cap W}\left\Vert \sum_{k\in\N}V_{k}\varphi\right\Vert _{L_{\nu}^{q}}\\
 & \leq\sup_{\varphi\in\mathscr{B}W^{\s,p}\cap\mathcal{X}_{n}}\sum_{k\in\N}\left\Vert V_{k}\varphi\right\Vert _{L_{\nu}^{q}}\ll\sum_{k\in\N}T_{k}d_{n_{k}}^{G}\left(\b_{p}^{\kappa M_{k}},\ell_{q}^{\kappa M_{k}}\right).
\end{align*}
\end{proof}
\begin{rem}
\label{rem:d_0 and d_M}Note that for $\star\in\left\{ K,G,L\right\} $
we have 
\begin{equation}
d_{0}^{\star}\left(\b_{p}^{M},\ell_{q}^{M}\right)=\sup_{x\in\b_{p}^{M}}\left\Vert x\right\Vert _{\ell_{q}^{M}}=M^{\left(1/q-1/p\right)_{+}}\label{eq:Norm_p_q}
\end{equation}
using \prettyref{eq:H=0000F6lderFiniteDim} and by the definition
of the widths we find for all $M\in\N$
\[
d_{M}^{\star}\left(\b_{p}^{M},\ell_{q}^{M}\right)=0.
\]
\end{rem}

Now we are in the position to give the necessary improved upper bound. 
\begin{prop}
\label{prop:ImprovedupperBoundCase} We have for $\star\in\left\{ K,G,L\right\} $
the following upper bounds: 
\begin{enumerate}
\item If $p\leq q$ and $\gamma\geq0$ such that 
\[
d_{n}^{\star}\left(\b_{p}^{2n},\ell_{q}^{2n}\right)\ll n^{-\gamma},
\]
then 
\begin{align*}
\uAO_{\star}\left(\mathscr{B}W^{\s,p},L_{\nu}^{q}\right) & \leq-\overline{S}_{\r}-\gamma
\end{align*}
and if $q=\infty$, then
\begin{align*}
\lAO_{\star}\left(\mathscr{B}W^{\s,p},L_{\nu}^{\infty}\right) & \leq-\frac{\rr}{\underline{\dim}_{M}\left(\nu\right)}-\gamma.
\end{align*}
\item If $q\leq p$, then 
\[
\uAO_{\star}\left(\mathscr{B}W^{\s,p},L_{\nu}^{q}\right)\leq-\overline{S}_{\r}+\frac{1}{q}-\frac{1}{p}.
\]
\end{enumerate}
\end{prop}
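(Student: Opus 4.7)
Both parts proceed by applying \prettyref{prop:UpperBound_Discretisation} with a suitable allocation of the total budget $n = \sum_k n_k$ across partition levels, then invoking the entropy identity \prettyref{eq:partition_entropy} (so $\card P_{t_k} \leq t_k^{-s_\r - \epsilon}$ for large $k$) to convert the level-wise estimates into a polynomial rate in $n$.

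For Part (2), with $q \leq p$, take $t_k = 2^{-k}$ and the \emph{full-approximation} allocation $n_k = \kappa M_k$ for $k \leq K$ and $n_k = 0$ for $k > K$. The low levels drop out since $d_{\kappa M_k}^\star = 0$, while each tail level contributes $T_k \cdot d_0^\star(\b_p^{\kappa M_k}, \ell_q^{\kappa M_k}) = T_k (\kappa M_k)^{1/q-1/p}$ by \prettyref{rem:d_0 and d_M} (the exponent being nonnegative here). The resulting geometric series in $k$, with common ratio $2^{-((1-s_\r)/q + s_\r/p)} < 1$, is dominated by its leading term $\asymp T_K M_K^{1/q-1/p}$. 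With $n \asymp M_K$ and the entropy estimate, letting $\epsilon \downarrow 0$ yields the claimed exponent $-\overline{S}_\r + 1/q - 1/p$.

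For Part (1), with $p \leq q$, the tail estimate above now only gives $-\overline{S}_\r$ since $d_0^\star \ll 1$. To harvest the extra $-\gamma$ one exploits the hypothesis $d_{n_k}^\star(\b_p^{\kappa M_k}, \ell_q^{\kappa M_k}) \ll n_k^{-\gamma}$, which applies when $n_k = \lceil \kappa M_k/2 \rceil$ (so that $\kappa M_k = 2 n_k$). I would try a three-zone allocation: $n_k = \kappa M_k$ for $k \leq K_0$ (killing the leading constant of $\sum_k T_k$), $n_k = \lceil \kappa M_k/2 \rceil$ for $K_0 < k \leq K$ (producing a Kashin-type contribution $\sum T_k M_k^{-\gamma}$), and $n_k = 0$ beyond. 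The cutoffs $K_0, K$ are then tuned against the budget $n \asymp M_K$ using $T_k = 2^{-k/q}$ and $M_k \asymp 2^{k s_\r}$. For $q = \infty$ the scheme runs with $\mathcal{D}_{j_k}$ replacing $P_{t_k}$, where $(j_k)$ is chosen along a subsequence realising the $\liminf$ in $\underline{\dim}_M(\nu)$, yielding the claimed lower approximation order bound $-\rr/\underline{\dim}_M(\nu) - \gamma$.

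The main technical obstacle is the balancing in Part (1): a priori the tail $\sum_{k > K} T_k \asymp T_K \asymp n^{-\overline{S}_\r}$ dominates the Kashin-type gain $T_K M_K^{-\gamma} \asymp n^{-\overline{S}_\r - \gamma}$, so a naive three-zone scheme only reveals the rate $-\overline{S}_\r$. Extracting the full improvement will require either more aggressive truncation exploiting a finer finite-dimensional estimate for $d_{n_k}^\star(\b_p^M, \ell_q^M)$ valid for $n_k \ll \kappa M_k/2$, or a more intricate allocation treating several well-separated levels with Kashin-type compression so that the tail is effectively replaced by an already-improved partial sum. Overcoming this tension---so that the entropy slope $1/s_\r$ and the finite-dimensional exponent $\gamma$ combine additively into $\overline{S}_\r + \gamma$ rather than having $\gamma$ absorbed into the tail---is the crux of the argument.
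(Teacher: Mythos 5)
Your Part (2) is essentially the paper's argument: full interpolation up to level $N$, the tail contributing $\sum_{k>N}T_k(\kappa M_k)^{1/q-1/p}$, which is a convergent geometric series dominated by its first term once $\card P_{t_k}\ll t_k^{-s_\r-\epsilon}$ is inserted. That part is fine.

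Part (1), however, has a genuine gap, and it is exactly the one you name yourself at the end: with $t_k=2^{-k}$ throughout, the zero-budget tail contributes $\sum_{k>K}T_k\asymp T_K\asymp n^{-\overline{S}_\r}$, which swamps the Kashin gain $T_KM_K^{-\gamma}$, so your three-zone scheme only yields $-\overline{S}_\r$. You flag this as ``the crux'' and offer two speculative ways out, but neither is carried through, so the main assertion of Part (1) is not proved. The paper's resolution is different from both of your suggestions and is worth recording: one decouples the partition scales from the budget. Set $\lambda\coloneqq\left\lceil 1+\gamma/\overline{S}_{\r}\right\rceil$ and take $j_{k}=k$ for $k\leq N$ but $j_{k}=\lambda k$ for $k>N$ (with $t_{k}=2^{-j_{k}}$), keeping $n_{k}=\kappa M_{k}$ for $k<N$, $n_{N}=\left\lceil\kappa M_{N}/2\right\rceil$, and $n_{k}=0$ for $k>N$. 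The tail levels still carry zero budget and each contributes $T_{k}\cdot d_{0}^{\star}\ll T_{k}$, but now $T_{k}=2^{-\lambda k/q}$ decays fast enough that
\begin{equation*}
\sum_{k>N}T_{k}\ll 2^{-\lambda N/q}\ll 2^{-N/q}\,2^{-N\gamma s_{\r}}\asymp T_{N}M_{N}^{-\gamma},
\end{equation*}
precisely because $\lambda/q\geq 1/q+\gamma/(q\overline{S}_{\r})=1/q+\gamma s_{\r}$. So the tail is absorbed into the improved main term rather than the other way around; no finer finite-dimensional estimate and no second Kashin zone (your ``$K_0$'') are needed. This accelerated choice of $(j_k)$ is admissible for \prettyref{prop:UpperBound_Discretisation} since \prettyref{eq:boundOnCardinalityOFCubes} only asks for bounded increments of $(j_k)$ for each fixed $N$. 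The same modification, run along a subsequence with the lower limit and $T_k=2^{-k\rr}$, $M_k=\card\mathcal{D}_k$, gives the $q=\infty$ bound on the lower approximation order, as you anticipated. Without this (or an equivalent) device, your write-up establishes only $\uAO_{\star}\leq-\overline{S}_{\r}$ in Case (1), not $-\overline{S}_{\r}-\gamma$.
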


\begin{proof}
\emph{ ad (1): }For $\star\in\left\{ K,G,L\right\} $ set $\lambda\coloneqq\left\lceil 1+\gamma/\overline{S}_{\r}\right\rceil \geq1$
and for $N\in\N$ define, seperately for the cases $q=\infty$ and
$q<\infty$, 
\[
j_{k}\coloneqq\begin{cases}
k, & \text{for }1\leq k\leq N,\\
\lambda k, & \text{for }k>N,
\end{cases}\;\left(t_{k}\right)\coloneqq\left(2^{-j_{k}}\right)_{k\in\N}
\]
with $M_{k}$ and $T_{k}$ accordingly. Further, let $\mathbf{n}(N)\coloneqq\sum_{k}n_{k}\in\N$
with 
\[
n_{k}\coloneqq n_{k}\left(N\right)\coloneqq\begin{cases}
\left\lceil \kappa M_{k}/2\right\rceil , & \text{for }k=N,\\
\kappa M_{k}, & \text{for }k=1,\ldots,N-1,\\
0, & \text{for }k>N.
\end{cases}
\]
With this choice, \prettyref{eq:boundOnCardinalityOFCubes} is satisfied
and we find $\mathbf{n}(N)\coloneqq\sum_{k=1}^{N-1}\kappa M_{k}+\left\lceil \kappa M_{N}/2\right\rceil \leq\kappa NM_{N}$
and with \prettyref{rem:d_0 and d_M} 
\[
d_{k}^{\star}\coloneqq d_{n_{k}}^{\star}\left(\b_{p}^{\kappa M_{k}},\ell_{q}^{\kappa M_{k}}\right)\ll\begin{cases}
\left(\kappa M_{k}/2\right)^{-\gamma}, & \text{for }k=N,\\
0, & \text{for }k=1,\ldots,N-1,\\
1, & \text{for }k>N.
\end{cases}
\]
Then \prettyref{prop:UpperBound_Discretisation} gives 
\begin{align*}
d_{\mathbf{n}(N)}^{\star}\left(\mathscr{B}W^{\s,p},L_{\nu}^{q}\right) & \ll\sum_{k\in\N}T_{k}d_{k}^{\star}\ll T_{N}\left(\kappa M_{N}/2\right)^{-\gamma}+\sum_{k>N}T_{\lambda k}\ll T_{N}M_{N}^{-\gamma}+T_{\lambda N}.
\end{align*}
Hence, for all $n\in\N$ with $\mathbf{n}(N)\leq n<\mathbf{n}(N+1)$
we have
\[
\frac{\log\left(d_{n}^{\star}\left(\mathscr{B}W^{\s,p},L_{\nu}^{q}\right)\right)}{\log\left(n\right)}\leq\frac{\log\left(d_{\mathbf{n}(N)}^{\star}\left(\mathscr{B}W^{\s,p},L_{\nu}^{q}\right)\right)}{\log\left(\mathbf{n}(N+1)\right)}.
\]
This gives 
\begin{align*}
 & \negthickspace\negthickspace\negthickspace\negthickspace\uAO_{\star}\left(\mathscr{B}W^{\s,p},L_{\nu}^{q}\right)\\
 & =\limsup_{n\to\infty}\frac{\log\left(d_{n}^{\star}\left(\mathscr{B}W^{\s,p},L_{\nu}^{q}\right)\right)}{\log\left(n\right)}\leq\limsup_{N\to\infty}\frac{\log\left(d_{\mathbf{n}(N)}^{\star}\left(\mathscr{B}W^{\s,p},L_{\nu}^{q}\right)\right)}{\log\left(\mathbf{n}(N+1)\right)}\\
 & \leq\limsup_{N\to\infty}\max\left\{ \frac{-\gamma\log\left(M_{N}\right)+\log\left(T_{N}\right)}{\log\left(\mathbf{n}(N+1)\right)},\frac{\log\left(T_{\lambda N}\right)}{\log\left(\mathbf{n}(N+1)\right)}\right\} \\
 & =\limsup_{N\to\infty}\frac{-\gamma\log\left(M_{N}\right)+\log\left(T_{N}\right)}{\log\left(\kappa\left(N+1\right)M_{N+1}\right)}\\
 & =-\overline{S}_{\r}-\gamma.
\end{align*}
If in the above estimate we consider the lower limit instead of the
upper limit for the special case $q=\infty$ and use the definition
of $T_{N}=2^{-N\rr}$ and $M_{N}=\card\mathcal{D}_{N}$, then we obtain
\[
\lAO_{\star}\left(\mathscr{B}W^{\s,p},L_{\nu}^{\infty}\right)=-\rr/\underline{\dim}_{M}\left(\nu\right)-\gamma.
\]

\emph{ad (2):} First note that by \prettyref{lem:GeneralBounds dn,dn}
it is sufficient to consider the case $\star=L$. For $N\in\N$, $t_{k}\coloneqq2^{-k}$,
respectively $j_{k}\coloneqq k$, $k\in\N$, and 
\[
n_{k}=n_{k}\left(N\right)\coloneqq\begin{cases}
\kappa M_{k}, & \text{for }k=1,\ldots,N,\\
0, & \text{for }k>N
\end{cases}
\]
we have, on the one hand, that \prettyref{eq:boundOnCardinalityOFCubes}
holds and $\mathbf{n}\left(N\right)\coloneqq\sum n_{k}\left(N\right)\ll\kappa NM_{N}.$
On the other hand, using \prettyref{prop:UpperBound_Discretisation}
combined with $d_{0}^{L}\left(\b_{p}^{n},\ell_{q}^{n}\right)=n^{1/q-1/p}$,
we obtain for $\epsilon>0$, $N\in\N$ sufficiently large, and $q<\infty$

\begin{align*}
d_{n}^{L}\left(\mathscr{B}W^{\s,p},L_{\nu}^{q}\right) & \ll\sum_{k=N+1}^{\infty}2^{-k/q}\left(\kappa M_{k}\right)^{\left(1/q-1/p\right)}=\sum_{k=N+1}^{\infty}2^{-k/q+k\left(1/q-1/p\right)\left(\log\left(\kappa M_{k}\right)/\left(k\log2\right)\right)}\\
 & \leq\sum_{k=N+1}^{\infty}2^{k\left(-1/q+\left(1/q-1/p\right)\left(s_{\r}+\epsilon\right)\right)}\\
 & \leq2^{N\left(-1/q+\left(1/q-1/p\right)\left(s_{\r}+\epsilon\right)\right)}\sum_{k=N+1}^{\infty}2^{-\left(k-N\right)\left(1/q-\left(1/q-1/p\right)\left(s_{\r}+\epsilon\right)\right)}\\
 & \ll2^{N\left(-1/q+\left(1/q-1/p\right)\left(s_{\r}+\epsilon\right)\right)}.
\end{align*}
 This gives
\begin{align*}
\uAO_{L}\left(\mathscr{B}W^{\s,p},L_{\nu}^{q}\right) & \leq\limsup_{N}\frac{-\left(N/q\right)\log2+\left(1/q-1/p\right)\left(s_{\r}+\epsilon\right)\log2}{\log N+\log\kappa+\log M_{N}}\\
 & =-\frac{1}{q\cdot s_{\r}}+\left(\frac{1}{q}-\frac{1}{p}\right)\left(\frac{s_{\r}+\epsilon}{s_{\r}}\right)
\end{align*}
and for $\epsilon\searrow0$ the claim follows. For $q=\infty$ we
have 
\begin{align*}
d_{n}^{L}\left(\mathscr{B}W^{\s,p},L_{\nu}^{\infty}\right) & \ll\sum_{k=N+1}^{\infty}2^{-k\rr}\left(\kappa M_{k}\right)^{\left(1/q-1/p\right)}=\sum_{k=N+1}^{\infty}2^{-k\rr+k\left(1/q-1/p\right)\left(\log\left(\kappa M_{k}\right)/\left(k\log2\right)\right)}\\
 & \leq\sum_{k=N+1}^{\infty}2^{k\left(-\rr+\left(1/q-1/p\right)\left(\overline{\dim}_{M}\left(\nu\right)+\epsilon\right)\right)}\\
 & \leq2^{N\left(-\rr+\left(1/q-1/p\right)\left(s_{\r}+\epsilon\right)\right)}\sum_{k=N+1}^{\infty}2^{-\left(k-N\right)\left(\rr-\left(1/q-1/p\right)\left(\overline{\dim}_{M}\left(\nu\right)+\epsilon\right)\right)}\\
 & \ll2^{N\left(-\rr+\left(1/q-1/p\right)\left(\overline{\dim}_{M}\left(\nu\right)+\epsilon\right)\right)}.
\end{align*}
 This gives
\begin{align*}
\uAO_{L}\left(\mathscr{B}W^{\s,p},L_{\nu}^{\infty}\right) & \leq\limsup_{N}\frac{-\left(N\rr\right)\log2+\left(1/q-1/p\right)\left(\overline{\dim}_{M}\left(\nu\right)+\epsilon\right)\log2}{\log N+\log\kappa+\log M_{N}}\\
 & =-\frac{\rr}{\overline{\dim}_{M}\left(\nu\right)}+\left(\frac{1}{q}-\frac{1}{p}\right)\left(\frac{\overline{\dim}_{M}\left(\nu\right)+\epsilon}{\overline{\dim}_{M}\left(\nu\right)}\right)
\end{align*}
and for $\epsilon\searrow0$, the assertion follows.
\end{proof}
\begin{proof}[Proof of upper bounds in \prettyref{thm:MAIN}]
The upper bounds provided in \prettyref{prop:ImprovedupperBoundCase}
combined with \prettyref{lem:KolmogorovFiniteDim}, \prettyref{cor:GelfandFiniteDim}
and \prettyref{cor:LinearFiniteDim} provide us with all upper bounds
as stated in our main theorem \prettyref{thm:MAIN} for the Kolmogorov
as well as in \prettyref{rem:Main} (4) for the Gel{\cprime}fand
and linear upper approximation orders. For $q=\infty$ the upper bound
for the lower approximation orders as stated in \eqref{thm:MAIN_lower}
is also contained in \prettyref{prop:ImprovedupperBoundCase} (1).
\end{proof}

\section{Coarse Muiltifractal formalism and lower bounds \label{sec:Coarse-Muiltifractal-formalism}}

In this section we only consider the Kolmogorov and Gel{\cprime}fand
case since the linear approximation order is always bounded from below
by the maximum of the other two approximation orders.

Recall from the introduction the definition of the lower and upper
optimise coarse multifractal dimension as stated in \prettyref{eq:upperlower optimal MFdim}
and that by \cite{KN2022,KN2022b} we know that $\overline{\mathcal{F}}_{\r}=s_{\r}$.
It therefore suffices to determine a lower bound for the upper approximation
orders in terms of $\overline{\mathcal{F}}_{\r}$.

As mentioned in the outline, we will give the lower bounds on the
approximation orders with respect to $\BB$ and it will be convenient
to equip the space with the equivalent norm $\left\Vert \,\cdot\,\right\Vert _{L^{\s,p}}$.
The following basic norm equality will be crucial throughout this
section.
\begin{lem}
\label{lem:support-scaling}For each cube $Q\in\mathcal{D}$, $\phi_{Q}:\Q\to Q$
as defined in \prettyref{eq:Def_phi_Q} and for all $u\in W_{p}^{\alpha}\left(\Q\right)$
we have $u_{Q}\coloneqq u\circ\phi_{Q}^{-1}\in W_{p}^{\alpha}\left(Q\right)$
and 
\[
\left\Vert u_{Q}\right\Vert _{L^{\s,p}\left(Q\right)}=\Lambda\left(Q\right)^{-\rr/m}\left\Vert u\right\Vert _{L^{\s,p}}.
\]
 
\end{lem}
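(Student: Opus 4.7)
The plan is to carry out a direct change of variables using the explicit form of the affine map $\phi_Q$. Writing $c_Q\coloneqq\Lambda(Q)^{1/m}$, the map from \prettyref{eq:Def_phi_Q} reads $\phi_Q(x)=c_Q x+b$, so its inverse is $y\mapsto(y-b)/c_Q$ and the Jacobian determinant of $\phi_Q$ equals $c_Q^m=\Lambda(Q)$.

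I would first use the chain rule (in the weak sense, which is justified by approximating $u$ in $W^{\s,p}(\Q)$ by smooth functions and passing to the limit) to obtain, for every multi-index $k\in\N_0^m$ with $|k|\leq\s$,
\[
D^{k}u_{Q}(y)=c_{Q}^{-|k|}\bigl(D^{k}u\bigr)\!\bigl(\phi_{Q}^{-1}(y)\bigr),\qquad y\in Q.
\]
This already shows that $u_Q$ possesses weak derivatives up to order $\s$ on $Q$ (measurability is clear, and $L^{p}$-integrability will follow from the computation below), hence $u_Q\in W^{\s,p}(Q)$.

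Next I would compute the $L^{\s,p}(Q)$-seminorm by restricting to $|k|=\s$, so that $|\nabla_{\s}u_{Q}(y)|^{p}=c_{Q}^{-\s p}|\nabla_{\s}u|^{p}(\phi_{Q}^{-1}(y))$, and substituting $x=\phi_Q^{-1}(y)$:
\[
\int_{Q}|\nabla_{\s}u_{Q}(y)|^{p}\d\Lambda(y)=c_{Q}^{-\s p}\int_{\Q}|\nabla_{\s}u(x)|^{p}c_{Q}^{m}\d\Lambda(x)=c_{Q}^{m-\s p}\left\Vert u\right\Vert _{L^{\s,p}}^{p}.
\]
Taking $p$-th roots gives $\|u_Q\|_{L^{\s,p}(Q)}=c_Q^{m/p-\s}\|u\|_{L^{\s,p}}=c_Q^{-\rr}\|u\|_{L^{\s,p}}$, and since $c_Q=\Lambda(Q)^{1/m}$ this is precisely $\Lambda(Q)^{-\rr/m}\|u\|_{L^{\s,p}}$, as claimed.

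There is no real obstacle here; the only point requiring a touch of care is justifying the chain rule for weak derivatives under the affine change of variables, but this is standard and reduces either to mollification or to the invariance of $W^{\s,p}$ under bi-Lipschitz diffeomorphisms. The computation itself is entirely driven by the fact that $\s-m/p=\rr$, which converts the differential scaling factor $c_Q^{-\s}$ and the volume factor $c_Q^{m/p}$ into the single exponent $-\rr$.
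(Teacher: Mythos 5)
Your proof is correct and follows essentially the same route as the paper: the chain rule for the affine map gives $D^{k}u_{Q}=c_{Q}^{-|k|}(D^{k}u)\circ\phi_{Q}^{-1}$, and the substitution $x=\phi_{Q}^{-1}(y)$ with Jacobian $c_{Q}^{m}$ yields the exponent $m/p-\s=-\rr$. The only point the paper covers that you omit is the case $p=\infty$, where "taking $p$-th roots" must be replaced by the trivial essential-supremum computation $\Vert u_{Q}\Vert_{L^{\s,\infty}(Q)}=c_{Q}^{-\s}\Vert u\Vert_{L^{\s,\infty}}$, consistent with $\rr=\s$ there.
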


\begin{proof}
For $Q\in\mathcal{D}_{n}$, recall the definition of $\phi_{Q}:\R^{m}\to\R^{m},x\mapsto c_{Q}x+b$
with $c_{Q}=\left(\Lambda\left(Q\right)\right)^{1/m}=2^{-n}$. This
gives for $u_{Q}\coloneqq u\circ\phi_{Q}^{-1}$, using chain rule
and substitution, 
\begin{align*}
\left\Vert u_{Q}\right\Vert _{L^{\s,p}\left(Q\right)} & =\left(\int_{\R^{m}}\left(\sum_{\lvert k\rvert=\s}\left|D^{k}\left(u\circ\phi_{Q}^{-1}\right)\right|^{2}\right)^{p/2}\d\Lambda\right)^{1/p}\\
 & =\left(\int_{\R^{m}}\left\vert \det\left(\left(\phi_{Q}^{-1}\right)'\right)\right\vert ^{-1}\left(c_{Q}^{-2\s}\sum_{\lvert k\rvert=\s}\lvert D^{k}u\rvert^{2}\right)^{p/2}\d\Lambda\right)^{1/p}\\
 & =\left(\int_{\R^{m}}c_{Q}^{-\s p+m}\left(\sum_{\lvert k\rvert=\s}\lvert D^{k}u\rvert^{2}\right)^{p/2}\d\Lambda\right)^{1/p}=\Lambda\left(Q\right)^{-\rr/m}\left\Vert u\right\Vert _{L^{\s,p}}.
\end{align*}
For $p=\infty$ we get by the chain rule
\begin{align*}
\left\Vert u_{Q}\right\Vert _{L^{\s,\infty}} & =\left\Vert \left(\sum_{\lvert k\rvert=\s}\left|D^{k}\left(u\circ\phi_{Q}^{-1}\right)\right|^{2}\right)^{1/2}\right\Vert _{L^{\infty}}=\left\Vert \left(c_{Q}^{-2\s}\sum_{\lvert k\rvert=\s}\left|\left(D^{k}u\right)\circ\phi_{Q}^{-1}\right|^{2}\right)^{1/2}\right\Vert _{L^{\infty}}\\
 & =c_{Q}^{-\s}\left\Vert \left(\sum_{\lvert k\rvert=\s}\left|\left(D^{k}u\right)\right|^{2}\right)^{1/2}\right\Vert _{L^{\infty}}=c_{Q}^{-\s}\left\Vert u\right\Vert _{L^{\s,\infty}}=\Lambda\left(Q\right)^{-\rr/m}\left\Vert u\right\Vert _{L^{\s,\infty}}.
\end{align*}
\end{proof}
\begin{notation}
For a cube $Q\in\mathcal{D}$ and $r>0$ we write $\left\langle Q\right\rangle _{r}$
for the cube of side length $r\Lambda\left(Q\right)^{1/m}$ that is
parallel to the axis and has the same midpoint as $Q$.
\end{notation}

The following construction is standard. 
\begin{lem}
\label{lem:mollifier}There exists $u\in C_{c}^{\infty}$that fulfils
the following properties:
\end{lem}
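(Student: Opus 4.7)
The plan is to construct $u$ by the classical bump-function recipe and then read off the stated properties one by one. First, I would start from the standard smooth bump
\[
\psi(x) \coloneqq \begin{cases} \exp\!\bigl(-1/(1-|x|^{2})\bigr), & |x|<1,\\ 0, & |x|\geq 1,\end{cases}
\]
which lies in $\mathcal{C}_{c}^{\infty}(\R^{m})$, is non-negative, is radially symmetric about the origin, and is strictly positive on the open ball $\{|x|<1\}$. Composing with a suitable affine map $x\mapsto c\,x + b$ that sends the unit ball into $\mathring{\Q}$ (e.g.\ $c=1/4$ and $b$ the midpoint of $\Q$), I obtain a bump $\tilde{u}\in\mathcal{C}_{c}^{\infty}(\mathring{\Q})$ supported in a cube $\langle \Q\rangle_{r}$ for some $r<1$ and identically equal to a positive constant on a smaller concentric cube $\langle \Q\rangle_{r'}$ with $r'<r$. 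Rescaling $\tilde{u}$ by a positive multiplicative constant, I can then normalise so that $\|u\|_{L^{\s,p}}\leq 1$ (or any other convenient normalisation of the Sobolev norm), while preserving all the geometric features.

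Next, I would check the anticipated pointwise/integral properties. Smoothness and compact support are inherited from $\psi$; strict positivity on an interior subcube follows since $\psi$ is strictly positive on an open ball and the affine change of variable preserves positivity. The Sobolev-norm bound follows from the fact that any finite linear combination of derivatives of $\psi$ up to order $\s$ is uniformly bounded and compactly supported, so all $L^{p}$ norms involved are finite; the rescaling absorbs the constant. The scaling behaviour of $u$ under $\phi_{Q}$ is then controlled by \prettyref{lem:support-scaling}, which gives the exact factor $\Lambda(Q)^{-\rr/m}$ in the $L^{\s,p}$ norm of $u_{Q}\coloneqq u\circ\phi_{Q}^{-1}$.

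The main thing to watch is the simultaneous satisfaction of all the properties: one must keep the support strictly inside the open unit cube (so that after translating/rescaling inside a dyadic cube $Q\in\mathcal{D}$ the resulting $u_{Q}$ still has support strictly inside $Q$, which is what makes $u_{Q}$ an element of $W_{0}^{\s,p}(Q)$) while simultaneously keeping a definite lower bound on $u$ on a macroscopic subcube (so that it generates a non-trivial $L_{\nu}^{q}$ mass when $\nu$ charges that subcube). Both features are built into the construction by choosing $r'<r<1$; the argument is routine once these parameters are fixed. No truly delicate estimate is needed — the only mild obstacle is bookkeeping to ensure that the chosen constants $c$, $b$, $r$, $r'$ and the normalising factor are mutually compatible with the list of properties that will be invoked in the subsequent lower-bound arguments.
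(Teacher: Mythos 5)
There is a genuine gap. The lemma requires three specific properties: $0\leq u\leq1$, $\supp(u)\subset\Q$, and — crucially — $u=1$ on the concentric subcube $\langle\Q\rangle_{1/3}$. Your construction takes the raw bump $\psi(x)=\exp\left(-1/(1-|x|^{2})\right)$ and composes it with an affine map, and you then assert that the result is \emph{identically equal to a positive constant} on a smaller concentric cube. That assertion is false: $\psi$ attains its maximum only at the origin and is nowhere locally constant on the set where it is positive, and an affine change of variables cannot create a plateau. So your $\tilde{u}$ is merely strictly positive (with no uniform plateau) on the inner cube, which is strictly weaker than property (3). This property is not cosmetic — it is used downstream exactly in the form $u_{Q}\equiv1$ on $Q$: in \prettyref{lem:operatornormQn} one needs $\int_{\langle Q\rangle_{3}}u_{Q}^{2}\d\nu\geq\nu(Q)$, and in the $q=\infty$ case of \prettyref{prop:discretesation_lower bound widths} one needs $\left\Vert u_{Q}\right\Vert _{L_{\nu}^{\infty}}=1$, which requires both $u\leq1$ and $u=1$ on the subcube. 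The paper obtains the plateau by the standard device of convolving a normalised mollifier with an indicator, $u\coloneqq\psi_{1/4}\star\mathds{1}_{\langle\Q\rangle_{1/2}}$: since $\psi_{1/4}$ has integral $1$ and support of radius $1/4$, the convolution equals $1$ wherever the $1/4$-ball sits inside $\langle\Q\rangle_{1/2}$, and it automatically satisfies $0\leq u\leq1$ and has compact support in $\mathring{\Q}$. A composition with a smooth step function would work equally well, but a bare rescaled bump does not.

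A secondary issue: you propose to normalise so that $\left\Vert u\right\Vert _{L^{\s,p}}\leq1$. That is not among the stated properties, and it is incompatible with keeping $u=1$ on a fixed subcube; the paper instead carries the constant $C=3^{\rr}/\left\Vert u\right\Vert _{L^{\s,p}}$ explicitly in \prettyref{lem:isomorphic_spheres} rather than normalising $u$ itself. Your discussion of the scaling via \prettyref{lem:support-scaling} and of keeping the support strictly inside the cube is fine, but the construction itself must be repaired as above before any of that applies.
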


\begin{enumerate}
\item $0\leq u\leq1$,
\item $\supp(u)\subset\Q$ and
\item $u=1$ on $\langle\Q\rangle_{1/3}$.
\end{enumerate}
\begin{proof}
We make use of the \emph{Friedrich mollifier} on $\R^{m}$ given by
\[
x\mapsto\psi(x)\coloneqq\begin{cases}
\exp\left(1/\left(\left\Vert x\right\Vert _{\ell_{2}^{m}}^{2}-1\right)\right), & \left\Vert x\right\Vert _{\ell_{2}^{m}}<1,\\
0, & \text{else }
\end{cases}
\]
and for $\epsilon>0$ setting 
\[
\psi_{\epsilon}:x\mapsto\left(\epsilon^{m}\int\psi\d\Lambda\right)^{-1}\psi\left(\frac{x}{\epsilon}\right).
\]
Then all properties are fulfilled by the convolution $u\coloneqq\psi_{1/4}\star\mathds{1}_{\langle\Q\rangle_{1/2}}$. 
\end{proof}
\begin{lem}[Well-separated subfamilies]
\label{lem:disjoint-3-cubes}For each $n\in\N$ and each set of cubes
$N\subset\mathcal{D}_{n}$, such that 
\[
\sup_{Q\in\mathcal{D}_{n}\setminus N}\nu\left(Q\right)\leq\inf_{Q\in N}\nu\left(Q\right),
\]
there exists a subset of $N$ denoted by $\hat{N}$ with the following
properties:
\begin{enumerate}
\item $\langle\mathring{Q}\rangle_{3}\cap\langle\mathring{Q}'\rangle_{3}=\emptyset$
for all $Q,Q'\in\hat{N}$ with $Q\neq Q'$,
\item $\card\hat{N}\geq\lfloor\card\left(N\right)/5^{m}\rfloor$ and 
\item for all $Q\in\hat{N}$ and all its neighbouring cubes of the same
side length $Q'\in\mathfrak{N}\left(Q\right)\coloneqq\left\{ C\in\mathcal{D}_{-\log_{2}(\Lambda(Q))/m}:\overline{C}\cap\overline{Q}\neq\emptyset\right\} $,
we have $\nu(Q)\geq\nu(Q')$.
\end{enumerate}
\end{lem}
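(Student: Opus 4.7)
The plan is a priority-driven Vitali-type greedy. Enumerate the cubes of $N$ in non-increasing order of $\nu$-mass as $Q^{(1)},Q^{(2)},\ldots$ with $\nu(Q^{(1)})\geq\nu(Q^{(2)})\geq\cdots$, breaking ties lexicographically on the integer index vector $(l_{1}(Q),\ldots,l_{m}(Q))$. Initialise $\hat{N}=\emptyset$ and process the $Q^{(i)}$ in turn, adjoining $Q^{(i)}$ to the current $\hat{N}$ precisely when $\max_{k}|l_{k}(Q^{(i)})-l_{k}(R)|\geq 3$ for every $R$ already selected.

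Property (1) is then built into the construction: for any two distinct $Q,Q'\in\hat{N}$ some coordinate $k_{0}$ satisfies $|l_{k_{0}}(Q)-l_{k_{0}}(Q')|\geq 3$, so the centres of the cubes $\langle\mathring{Q}\rangle_{3}$ and $\langle\mathring{Q}'\rangle_{3}$, each of common side $3\Lambda(Q)^{1/m}$, differ by at least $3\Lambda(Q)^{1/m}$ along that axis and the open cubes have disjoint interiors. Property (2) is a standard packing estimate: any rejected $Q^{(j)}\in N\setminus\hat{N}$ was blocked by some earlier $R\in\hat{N}$ with $\max_{k}|l_{k}(Q^{(j)})-l_{k}(R)|\leq 2$, so $Q^{(j)}$ lies in the discrete Chebyshev-$2$ ball around $R$, which contains exactly $5^{m}$ lattice points. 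Hence $\card(N)\leq 5^{m}\card(\hat{N})$, giving $\card(\hat{N})\geq\lfloor\card(N)/5^{m}\rfloor$.

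For property (3), fix $Q\in\hat{N}$ and $Q'\in\mathfrak{N}(Q)$. If $Q'\notin N$, the hypothesis $\sup_{R\in\mathcal{D}_{n}\setminus N}\nu(R)\leq\inf_{R\in N}\nu(R)$ gives $\nu(Q')\leq\inf_{R\in N}\nu(R)\leq\nu(Q)$ at once. If $Q'\in N\setminus\{Q\}$, then $\max_{k}|l_{k}(Q)-l_{k}(Q')|\leq 1<3$, so (1) forces $Q'\notin\hat{N}$; therefore $Q'$ was processed and rejected by some $R\in\hat{N}$ with $\max_{k}|l_{k}(Q')-l_{k}(R)|\leq 2$, and the priority rule together with the hypothesis on $N$ should yield $\nu(Q)\geq\nu(Q')$.

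The main obstacle is exactly this last sub-case, because a pure $\nu$-priority leaves open the borderline configuration in which $R\in\hat{N}$ rejects $Q'$ at Chebyshev distance $2$ while $Q$ sits at Chebyshev distance exactly $3$ from $R$; the triangle inequality then only gives $\max_{k}|l_{k}(Q)-l_{k}(R)|\leq 3$, which is consistent with admission of $Q$ while $\nu(Q')\geq\nu(Q)$. The most natural way I would close the gap is to strengthen the priority, processing cubes in lexicographic order of the pair $\bigl(\max_{R\in\mathfrak{N}(Q)}\nu(R),\,\nu(Q)\bigr)$ so that every neighbour of $Q$ in $N$ with strictly larger $\nu$-value is considered before $Q$; any such neighbour admitted into $\hat{N}$ would then exclude $Q$ by (1), and any such neighbour rejected by a common blocker would also block $Q$, forcing the only surviving case to be $\nu(Q)\geq\nu(Q')$. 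Alternatively one can perform a local swap, exchanging $Q$ for a $\nu$-maximal element of $\mathfrak{N}(Q)\cap N$ and re-verifying (1) and (2) at a fixed cost absorbed into the constant $5^{m}$.
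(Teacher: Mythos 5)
Your greedy selection and your verification of properties (1) and (2) are correct and essentially the same as the paper's: the paper also runs a Vitali-type selection (iteratively protecting a $\nu$-maximal conflicted cube and deleting everything in its Chebyshev-$2$ neighbourhood), and the packing bound $\card N\leq 5^{m}\card\hat{N}$ is identical. The real issue is property (3), and you have located it precisely: a neighbour $Q'$ of an accepted cube $Q$ may be eliminated by a blocker $R$ at Chebyshev distance $2$ from $Q'$ and $3$ from $Q$, and then the greedy order only gives $\nu(R)\geq\nu(Q')$, not $\nu(Q)\geq\nu(Q')$. Neither of your repairs closes this. Fix A fails on a concrete example: for $m=1$ put atoms of mass $10,5,1$ (normalised) in the dyadic cubes with indices $l$, $l+2$, $l+3$; the keys $\bigl(\max_{R\in\mathfrak{N}(\cdot)}\nu(R),\nu(\cdot)\bigr)$ are $(10,10)$, $(5,5)$, $(5,1)$, so the processing order is unchanged, the first cube is accepted, the second rejected (distance $2$), the third accepted (distance $3$), and the accepted cube of mass $1$ has the rejected cube of mass $5$ as a neighbour. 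Your clause ``rejected by a common blocker would also block $Q$'' silently assumes the blocker is common, which is exactly the distance-$3$ configuration you set out to exclude. Fix B is not an argument: the swapped-in cube sits at Chebyshev distance $2$ from $Q$'s other possible conflicts, so (1) can break, and it need not be maximal in \emph{its own} neighbourhood, so (3) is not restored either.

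You should know, however, that the gap cannot be closed, because properties (2) and (3) are incompatible in general: (3) forces $\hat{N}$ to consist of local maxima of $\nu$ on the adjacency graph of $\mathcal{D}_{n}$, and a strictly monotone chain has only one. Concretely, for $m=1$ let $\nu$ give mass proportional to $2^{j}$ to eleven consecutive level-$n$ dyadic cubes $Q_{0},\dots,Q_{10}$ in $\mathring{\Q}$ (plus, to meet the standing assumption $\card\supp(\nu)=\infty$, a remote part of total mass smaller than $\nu(Q_{0})$), and take $N=\{Q_{0},\dots,Q_{10}\}$; the hypothesis of the lemma holds, every $Q_{j}$ with $j\leq9$ has the strictly heavier neighbour $Q_{j+1}$, so any $\hat{N}$ satisfying (3) lies in $\{Q_{10}\}$, while (2) demands $\card\hat{N}\geq\lfloor11/5\rfloor=2$. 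The paper's own proof dismisses (3) as ``clear by construction,'' but its construction exhibits exactly the failure you describe (the masses $10,5,1$ at indices $l,l+2,l+3$ produce $\hat{N}$ containing the cube of mass $1$). So the statement must be weakened — e.g.\ by replacing (3) with a comparison of $\nu(\langle Q\rangle_{3})$ against $\nu(Q)$ for a differently chosen subfamily, or by enlarging the constant and restricting to locally maximal cubes of a larger ambient family — before any proof can succeed; your instinct that the borderline case is fatal is correct.
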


\begin{proof}
 We inductively construct a finite decreasing sequence $\left(D^{(0)},\ldots,D^{(k)}\right)$
of subsets of $N$ for some $k\in\N$ as follows. We set $D^{(0)}=N$.
Now, for given $D^{(i)}$ we let 
\[
D_{(i)}\coloneqq\left\{ Q\in D^{(i)}:\exists Q'\in D^{(i)}\setminus\left\{ Q\right\} :\mathring{Q}'\cap\langle\mathring{Q}\rangle_{5}\neq\emptyset\right\} .
\]
If $D_{(i)}$ is not empty, then we choose $Q\in D_{(i)}$ such that
$\nu\left(Q\right)=\max\left\{ \nu\left(Q'\right):Q'\in D_{(i)}\right\} $
and define
\[
D^{(i+1)}=\left\{ Q'\in D^{(i)}:\mathring{Q}'\cap\langle\mathring{Q}\rangle_{5}=\emptyset\right\} \cup\left\{ Q\right\} .
\]
Otherwise, if $D_{(i)}$ is empty, then our induction terminates for
$k\coloneqq i$ and we define $\hat{N}\coloneqq D^{\left(k\right)}.$
Note that there are $5^{m}-1$ cubes $Q'\in\mathcal{D}_{n}\setminus\left\{ Q\right\} $
at most that fulfil $\langle\mathring{Q}\rangle_{5}\cap\mathring{Q}'\neq\emptyset$
for all $Q\in\mathcal{D}_{n}$. Thus,
\[
\card\hat{N}\geq\lfloor\mathcal{N}/5^{m}\rfloor.
\]
The third property is clear by construction and our assumption. 
\end{proof}
Since both $\mathcal{D}_{n}$ and $N_{\r,n}\left(\a\right)$ fulfil
the assumption imposed on $N$ in \prettyref{lem:disjoint-3-cubes},
we define 
\[
Q_{n}\left(\a\right)\coloneqq\widehat{N_{\r,n}\left(\a\right)}\:\text{and }\;c_{n,\a}\coloneqq\card Q_{n}\left(\a\right),
\]
where as before $\mathcal{N}_{\r,n}\left(\a\right)\coloneqq\card N_{\r,n}\left(\a\right)$,
and 
\[
Q_{n}\coloneqq\widehat{\mathcal{D}_{n}},\:\text{and }\;c_{n}\coloneqq\card Q_{n},\:\mathcal{N}_{n}\coloneqq\card\mathcal{D}_{n}.
\]

For $n\in\N$ and we choose $u\in C_{c}^{\infty}$ with $\supp u\subset\Q$
as in \prettyref{lem:mollifier} and $u_{Q}\coloneqq u\circ\varphi_{\left\langle Q\right\rangle _{3}}^{-1}$
given by \prettyref{lem:support-scaling} for all $Q\in\mathcal{D}_{n}$.
This gives rise to the $c_{\a,n}$-dimensional, respectively $c_{n}$-dimensional,
subspace of $W_{0}^{\s,p}$
\[
\mathcal{W}_{\a,n}\coloneqq\spann\left\{ u_{Q}:Q\in Q_{n}\left(\a\right)\right\} ,\;\text{and }\:\mathcal{W}_{n}\coloneqq\spann\left\{ u_{Q}:Q\in Q_{n}\right\} ,\:\text{respect. }
\]

\begin{rem}
\label{rem:DisjointQ_n}Observe that for $\mathfrak{Q}_{n}\in\left\{ Q_{n}\left(\a\right),Q_{n}\right\} $
and $Q,Q'\in\mathfrak{Q_{n}}$ with $Q\neq Q'$ we have $\supp u_{Q}\cap\supp u_{Q'}\subset\left\langle Q\right\rangle _{3}\cap\left\langle Q'\right\rangle _{3}=\emptyset$
and since $\supp\left(\nu\right)\subset\mathring{\Q}$ we have $\left\langle Q\right\rangle _{3}\subset\Q$
for all $Q\in\mathfrak{Q_{n}}$ and $n\in\N$ large enough. By \prettyref{lem:support-scaling}
we have 
\[
\left\Vert u_{Q}\right\Vert _{L^{\s,p}\left(Q\right)}=\Lambda\left(\left\langle Q\right\rangle _{3}\right)^{-\rr/m}\left\Vert u\right\Vert _{L^{\s,p}}=\left(3\cdot2^{-n}\right)^{-\rr}\left\Vert u\right\Vert _{L^{\s,p}}.
\]
\end{rem}

\begin{lem}
\label{lem:isomorphic_spheres}Let $\a>0$ and for $u\in C_{c}^{\infty}\left(\Q\right)$
with $\left\Vert u\right\Vert _{L^{\s,p}}>0$ fix $C\coloneqq3^{\rr}/\left\Vert u\right\Vert _{L^{\s,p}}$.
Then we have
\begin{enumerate}
\item $\left(a_{Q}:Q\in Q_{n}\left(\a\right)\right)\in C2^{-n\rr}\b_{p}^{c_{\a,n}}$
if and only if $f=\sum_{Q\in Q_{n}(\a)}a_{Q}u_{Q}\in\mathscr{B}\mathcal{W}_{\a,n}$, 
\item $\left(a_{Q}:Q\in Q_{n}\right)\in C2^{-n\rr}\b_{p}^{c_{n}}$ if and
only if $f=\sum_{Q\in Q_{n}(\a)}a_{Q}u_{Q}\in\mathscr{B}\mathcal{W}_{n}$.
\end{enumerate}
\end{lem}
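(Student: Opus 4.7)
The plan is to reduce both equivalences to a single direct computation by exploiting two ingredients already in place: the pairwise disjointness of the bumps $\{u_Q\}$ recorded in \prettyref{rem:DisjointQ_n} and the norm-scaling identity of \prettyref{lem:support-scaling}. Since $\supp(\nu)\subset\mathring{\Q}$, for $n$ sufficiently large every $\langle Q\rangle_{3}$ with $Q\in\mathfrak{Q}_n\in\{Q_n(\alpha),Q_n\}$ lies inside $\Q$, so each $u_Q$ belongs to $W_0^{\s,p}$ and, as stipulated, we may work with the equivalent norm $\|\cdot\|_{L^{\s,p}}$.

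First I would observe that the supports $\supp u_Q$, $Q\in\mathfrak{Q}_n$, are pairwise disjoint, hence the weak derivatives of order $\s$ of any finite linear combination $f=\sum_{Q\in\mathfrak{Q}_n}a_Q u_Q$ likewise have pairwise disjoint supports. Integrating (resp.\ taking the supremum when $p=\infty$) therefore gives
\begin{equation*}
\|f\|_{L^{\s,p}}^{p}=\sum_{Q\in\mathfrak{Q}_n}|a_Q|^{p}\,\|u_Q\|_{L^{\s,p}}^{p}\quad(p<\infty),\qquad \|f\|_{L^{\s,\infty}}=\max_{Q\in\mathfrak{Q}_n}|a_Q|\,\|u_Q\|_{L^{\s,\infty}}.
\end{equation*}

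Next I would apply \prettyref{lem:support-scaling} to the map $\phi_{\langle Q\rangle_{3}}$. Since $\Lambda(\langle Q\rangle_{3})^{1/m}=3\cdot 2^{-n}$ for every $Q\in\mathcal{D}_n$, it yields $\|u_Q\|_{L^{\s,p}}=(3\cdot 2^{-n})^{-\rr}\|u\|_{L^{\s,p}}$ uniformly in $Q$, so combining with the previous display one obtains
\begin{equation*}
\|f\|_{L^{\s,p}}=(3\cdot 2^{-n})^{-\rr}\,\|u\|_{L^{\s,p}}\cdot\|(a_Q)_{Q\in\mathfrak{Q}_n}\|_{\ell_{p}^{\card\mathfrak{Q}_n}}.
\end{equation*}
The condition $\|f\|_{L^{\s,p}}\leq 1$ is then equivalent to $\|(a_Q)\|_{\ell_{p}^{\card\mathfrak{Q}_n}}\leq \tfrac{3^{\rr}}{\|u\|_{L^{\s,p}}}\cdot 2^{-n\rr}=C\,2^{-n\rr}$, which is precisely claims (1) and (2) for the choices $\mathfrak{Q}_n=Q_n(\alpha)$ and $\mathfrak{Q}_n=Q_n$, respectively.

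I do not anticipate any real obstacle here: once disjointness of the supports and the scaling identity are invoked, the proof reduces to a one-line computation. The only point deserving mild care is the bookkeeping of the factor $3^{\rr}$ arising from working on the dilated cube $\langle Q\rangle_{3}$ rather than $Q$ itself, which is exactly what is absorbed into the normalising constant $C=3^{\rr}/\|u\|_{L^{\s,p}}$ fixed in the statement.
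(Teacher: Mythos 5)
Your proposal is correct and follows essentially the same route as the paper: disjointness of the supports of the $u_Q$ splits the $L^{\s,p}$-seminorm into an $\ell_p$-sum (resp.\ a maximum for $p=\infty$), and \prettyref{lem:support-scaling} applied to $\phi_{\langle Q\rangle_3}$ gives the uniform factor $(3\cdot 2^{-n})^{-\rr}\|u\|_{L^{\s,p}}$, whence the equivalence with the constant $C=3^{\rr}/\|u\|_{L^{\s,p}}$. No gaps.
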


\begin{proof}
\emph{ad (1):} By the construction of $Q_{n}\left(\a\right)\subset N_{\r,n}\left(\a\right)$
for fixed $n\in\N$, see \prettyref{lem:disjoint-3-cubes}, we find
for $f=\sum_{Q}a_{Q}u_{Q}\in\mathcal{W}_{\a,n}$ and $p<\infty$
\begin{align*}
\left\Vert \sum_{Q\in Q_{n}\left(\a\right)}a_{Q}u_{Q}\right\Vert _{L^{\s,p}} & =\left(\int_{\Q}\left\vert \sum_{\left\vert k\right\vert =\s}\left(D^{k}\sum_{Q\in Q_{n}\left(\a\right)}a_{Q}u_{Q}\right)^{2}\right\vert ^{p/2}\d\Lambda\right)^{1/p}\\
 & =\left(\sum_{Q\in Q_{n}\left(\a\right)}\sum_{\left\vert k\right\vert =\s}\int\left\vert D^{k}a_{Q}u_{Q}\right\vert ^{p}\d\Lambda\right)^{1/p}\\
 & =\left(\sum_{Q\in Q_{n}\left(\a\right)}\left\Vert a_{Q}u_{Q}\right\Vert _{L^{\s,p}\left(Q\right)}^{p}\right)^{1/p}
\end{align*}
using the fact that the $u_{Q}$ have disjoint supports. For $p=\infty$
we get 
\begin{align*}
\left\Vert \sum_{Q\in Q_{n}\left(\a\right)}a_{Q}u_{Q}\right\Vert _{L^{\s,\infty}} & =\left\Vert \left\vert \sum_{\left\vert k\right\vert =\s}\left(D^{k}\sum_{Q\in Q_{n}\left(\a\right)}a_{Q}u_{Q}\right)^{2}\right\vert ^{1/2}\right\Vert _{L^{\infty}}\\
 & =\max_{Q\in Q_{n}\left(\a\right)}\left\Vert \left\vert \sum_{\left\vert k\right\vert =\s}\left(D^{k}a_{Q}u_{Q}\right)^{2}\right\vert ^{1/2}\right\Vert _{L^{\infty}}\\
 & =\max_{Q\in Q_{n}\left(\a\right)}\left\Vert a_{Q}u_{Q}\right\Vert _{L^{\s,\infty}\left(Q\right)}.
\end{align*}
Together with \prettyref{lem:support-scaling} this gives for $p<\infty$
\begin{align*}
\left\Vert \sum_{Q\in Q_{n}\left(\a\right)}a_{Q}u_{Q}\right\Vert _{L^{\s,p}} & =\left(\sum_{Q\in Q_{n}\left(\a\right)}\left\vert a_{Q}\right\vert ^{p}\left(\left(3\cdot2^{-n}\right)^{-\rr}\left\Vert u\right\Vert _{L^{\s,p}}\right)^{p}\right)^{1/p}\\
 & =\left(3\cdot2^{-n}\right)^{-\rr}\left\Vert u\right\Vert _{L^{\s,p}}\left\vert \left(a_{Q}\right)\right\vert _{\ell_{p}^{c_{\a,n}}}\\
 & =\left(3\cdot2^{-n}\right)^{-\rr}\left\Vert u\right\Vert _{L^{\s,p}}\left\vert \left(a_{Q}\right)\right\vert _{\ell_{p}^{c_{\a,n}}}
\end{align*}
and for $p=\infty$
\begin{align*}
\left\Vert \sum_{Q\in Q_{n}\left(\a\right)}a_{Q}u_{Q}\right\Vert _{L^{\s,\infty}} & =\max_{Q\in Q_{n}\left(\a\right)}\left\Vert a_{Q}u_{Q}\right\Vert _{L^{\s,\infty}}=\left(3\cdot2^{-n}\right)^{-\rr}\left\Vert u\right\Vert _{L^{\s,\infty}}\left\vert \left(a_{Q}\right)\right\vert _{\ell_{\infty}^{c_{\a,n}}},
\end{align*}
which shows the first claim. The second claim follows along the same
lines. 
\end{proof}
For $\mathfrak{W}_{n}\in\left\{ \mathcal{W}_{\a,n},\mathcal{W}_{n}\right\} $,
and $\mathfrak{Q}_{n}\in\left\{ Q_{n}\left(\a\right),Q_{n}\right\} $,
$n\in\N$, let us define the linear operator
\begin{equation}
\mathcal{Q}_{n}:L_{\nu}^{q}\to\left(\mathfrak{W}_{n},\left\Vert \,\cdot\,\right\Vert _{L_{\nu}^{q}}\right),\;\mathcal{Q}_{n}\left(f\right)\coloneqq\sum_{Q\in\mathfrak{Q}_{n}}\frac{\int fu_{Q}\d\nu}{\int u_{Q}^{2}\d\nu}u_{Q}.\label{eq:operatorQn}
\end{equation}
We will now estimate its operator norm.
\begin{lem}
\label{lem:operatornormQn} For the operator norm of $\mathcal{Q}_{n}$,
as defined in \prettyref{eq:operatorQn}, we have 
\[
\left\Vert \mathcal{Q}_{n}\right\Vert \leq3^{m}.
\]
 
\end{lem}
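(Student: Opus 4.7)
The plan is to exploit three structural ingredients tied to the construction of $\mathfrak{Q}_n$ and the mollifier family $\{u_Q\}$: first, the disjointness of the enlarged supports $\langle Q\rangle_3$ for $Q\in\mathfrak{Q}_n$, provided by Lemma~\ref{lem:disjoint-3-cubes}(1); second, the pointwise sandwich $\mathbbm{1}_Q\leq u_Q\leq\mathbbm{1}_{\langle Q\rangle_3}$, which follows from the properties of $u$ in Lemma~\ref{lem:mollifier} after pulling back by $\phi_{\langle Q\rangle_3}$; and third, the crucial mass comparison $\nu(\langle Q\rangle_3)\leq 3^m\nu(Q)$ for $Q\in\mathfrak{Q}_n$, which is the source of the constant $3^m$ in the statement. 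This last point holds because $\langle Q\rangle_3$ is covered by $Q$ together with its at most $3^m-1$ dyadic neighbours in $\mathfrak{N}(Q)$, and the maximality property (3) of $\hat{N}$ in Lemma~\ref{lem:disjoint-3-cubes} guarantees that each such neighbour carries no more $\nu$-mass than $Q$ itself.

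Assume first that $q<\infty$. Using the disjointness of the supports of the $u_Q$, I would split $\|\mathcal{Q}_n f\|_{L^q_\nu}^q = \sum_{Q\in\mathfrak{Q}_n}\left|\tfrac{\int f u_Q\,d\nu}{\int u_Q^2\,d\nu}\right|^q\int u_Q^q\,d\nu$ and estimate the three ingredients in each summand separately. Hölder's inequality with exponents $q,q'$ combined with $u_Q\leq\mathbbm{1}_{\langle Q\rangle_3}$ gives $|\int f u_Q\,d\nu|\leq(\int_{\langle Q\rangle_3}|f|^q\,d\nu)^{1/q}\nu(\langle Q\rangle_3)^{1/q'}$; the lower bound $u_Q\geq\mathbbm{1}_Q$ yields $\int u_Q^2\,d\nu\geq\nu(Q)$; and $u_Q\leq\mathbbm{1}_{\langle Q\rangle_3}$ also gives $\int u_Q^q\,d\nu\leq\nu(\langle Q\rangle_3)$. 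Combining these estimates and using the identity $q/q'+1=q$, each summand is bounded by $(\nu(\langle Q\rangle_3)/\nu(Q))^q\int_{\langle Q\rangle_3}|f|^q\,d\nu$, which by the mass comparison is at most $3^{mq}\int_{\langle Q\rangle_3}|f|^q\,d\nu$. Summing over $Q\in\mathfrak{Q}_n$ and invoking disjointness one last time then yields $\|\mathcal{Q}_n f\|_{L^q_\nu}^q\leq 3^{mq}\|f\|_{L^q_\nu}^q$.

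For $q=\infty$ the argument is more direct: on the support of a single $u_Q$ one has $|\mathcal{Q}_n f|\leq u_Q\cdot\|f\|_{L^\infty_\nu}\cdot\int u_Q\,d\nu/\int u_Q^2\,d\nu\leq\|f\|_{L^\infty_\nu}\cdot\nu(\langle Q\rangle_3)/\nu(Q)\leq 3^m\|f\|_{L^\infty_\nu}$, and disjointness of the supports delivers the global $L^\infty_\nu$-bound. I do not foresee any serious obstacle; the only nontrivial input is the mass comparison $\nu(\langle Q\rangle_3)\leq 3^m\nu(Q)$, for which it is essential to work with the maximal family $\hat N$ from Lemma~\ref{lem:disjoint-3-cubes} rather than with the full family $N$, since for singular $\nu$ the ratio $\nu(\langle Q\rangle_3)/\nu(Q)$ could otherwise be arbitrarily large.
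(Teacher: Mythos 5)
Your proposal is correct and follows essentially the same route as the paper: Hölder with exponents $q,q'$ on the numerator, the lower bound $\int u_Q^2\,\mathrm{d}\nu\geq\nu(Q)$ from $u_Q\geq\mathbbm{1}_Q$, disjointness of the enlarged cubes, and the comparison $\nu(\langle Q\rangle_3)\leq 3^m\nu(Q)$ via property (3) of Lemma \ref{lem:disjoint-3-cubes} — the paper merely packages the last point as the norm estimate $\left\Vert u_Q\right\Vert_{L^r_\nu}\leq 3^{m/r}\nu(Q)^{1/r}$. The exponent bookkeeping ($q/q'+1=q$) and the $q=\infty$ case match the paper's argument as well.
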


\begin{proof}
As before, let $q'$ denote the dual of $q\in\left[1,\infty\right]$.
For $f\in L_{\nu}^{q}$, $Q\in\mathcal{D}_{n}$ with $\nu\left(Q\right)>0$
we find that $fu_{Q}\in L_{\nu}^{1}$ and for $r\geq1$ we have 
\begin{align*}
\left\Vert u_{Q}\right\Vert _{L_{\nu}^{r}} & =\left(\sum_{C\in\mathfrak{N}\left(Q\right)}\int_{C}\left\vert u_{Q}\right\vert ^{r}\d\nu\right)^{1/r}\leq\card\mathfrak{N}(Q)^{1/r}\max_{C\in\mathfrak{N}\left(Q\right)}\left\Vert \1_{C}\right\Vert _{L_{\nu}^{r}}\leq3^{m/r}\nu\left(Q\right)^{1/r}
\end{align*}
with $\mathfrak{N}(Q)$ denoting the set of neighbouring cubes of
$Q$ in $\mathcal{D}_{n}$, as before. Note that we used $\nu(Q)=\max_{C\in\mathfrak{N}(Q)}\nu(C)$
by \prettyref{lem:disjoint-3-cubes} and $u_{Q}\leq1$ by \prettyref{lem:mollifier}.
Also, $\left\Vert u_{Q}\right\Vert _{L_{\nu}^{\infty}}\leq1=3^{m/\infty}\nu\left(Q\right)^{1/\infty}$.
Thus, Hölder's inequality, the fact that the cubes in $Q_{n}\left(\a\right)$,
respectively $Q_{n}$, are disjoint and $\int_{\langle Q\rangle_{3}}u_{Q}^{2}\d\nu\geq\int_{Q}1\d\nu=\nu\left(Q\right)>0$
give for $q<\infty$
\begin{align*}
\left\Vert \mathcal{Q}_{n}f\right\Vert _{L_{\nu}^{q}}^{q} & =\sum_{Q\in Q_{n}\left(\a\right)}\left\vert \frac{\int fu_{Q}\d\nu}{\int u_{Q}^{2}\d\nu}\right\vert ^{q}\left\Vert u_{Q}\right\Vert _{L_{\nu}^{q}}^{q}\leq\sum_{Q\in Q_{n}\left(\a\right)}\frac{\left\Vert f|_{\left\langle Q\right\rangle _{3}}\right\Vert _{L_{\nu}^{q}}^{q}}{\nu\left(Q\right)^{q}}\left\Vert u_{Q}\right\Vert _{L_{\nu}^{q'}}^{q}\left\Vert u_{Q}\right\Vert _{L_{\nu}^{q}}^{q}\\
 & \leq\sum_{Q\in Q_{n}\left(\a\right)}\frac{\left\Vert f|_{\left\langle Q\right\rangle _{3}}\right\Vert _{L_{\nu}^{q}}^{q}}{\nu\left(Q\right)^{q}}\left(3^{m/q'}\nu\left(Q\right)^{1/q'}3^{m/q}\nu\left(Q\right)^{1/q}\right)^{q}\leq3^{mq}\left\Vert f\right\Vert _{L_{\nu}^{q}}^{q}
\end{align*}
and for $q=\infty$
\begin{align*}
\left\Vert \mathcal{Q}_{n}f\right\Vert _{L_{\nu}^{\infty}} & =\max_{Q\in Q_{n}}\left\vert \frac{\int fu_{Q}\d\nu}{\int u_{Q}^{2}\d\nu}\right\vert \left\Vert u_{Q}\right\Vert _{L_{\nu}^{\infty}}\leq\max_{Q\in Q_{n}}\frac{\left\Vert f|_{\left\langle Q\right\rangle _{3}}\right\Vert _{L_{\nu}^{\infty}}}{\nu\left(Q\right)}\left\Vert u_{Q}\right\Vert _{L_{\nu}^{1}}\left\Vert u_{Q}\right\Vert _{L_{\nu}^{\infty}}\\
 & \leq\max_{Q\in Q_{n}}\frac{\left\Vert f|_{\left\langle Q\right\rangle _{3}}\right\Vert _{L_{\nu}^{\infty}}}{\nu\left(Q\right)}3^{m}\nu\left(Q\right)\leq3^{m}\left\Vert f\right\Vert _{L_{\nu}^{\infty}}.
\end{align*}
\end{proof}
\begin{prop}[Discretisation – lower bound]
\label{prop:discretesation_lower bound widths}For $\star\in\left\{ K,G\right\} $,
$\a>0$, $k<c_{\a,n}$, and $n\in\N$ we have for $q<\infty$ 
\[
d_{k}^{\star}\left(\BB,L_{\nu}^{q}\right)\gg2^{-\a n/q}d_{k}^{\star}\left(\b_{p}^{c_{\a,n}},\ell_{q}^{c_{\a,n}}\right)
\]
as well as for $q=\infty$ and $k<c_{n}$ 
\[
d_{k}^{\star}\left(\BB,L_{\nu}^{\infty}\right)\gg2^{-n\rr}d_{k}^{\star}\left(\b_{p}^{c_{n}},\ell_{\infty}^{c_{n}}\right).
\]
\end{prop}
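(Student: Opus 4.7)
The plan is a two-step discretisation. First, restrict $\BB$ to its intersection with the finite-dimensional subspace $\mathcal{W}_{\a,n}$ (resp. $\mathcal{W}_n$ for $q=\infty$), and then transfer the resulting widths to those of $\b_p^{c_{\a,n}}$ in $\ell_q^{c_{\a,n}}$ through the coordinate isomorphism $\Psi\colon(a_Q)\mapsto\sum_Q a_Q u_Q$. By \prettyref{lem:isomorphic_spheres}, $\Psi$ sends $C\cdot 2^{-n\rr}\b_p^{c_{\a,n}}$ bijectively onto $\BB\cap\mathcal{W}_{\a,n}$. Using pairwise disjointness of the supports $\langle Q\rangle_3$ (cf.\ \prettyref{rem:DisjointQ_n}), the pointwise bound $u_Q\equiv 1$ on $Q$ and the defining inequality $\J_\r(Q)\geq 2^{-\a n}$, a direct computation yields for $q<\infty$ the norm lower bound $\|\Psi(a)\|_{L_\nu^q}^q\geq\sum_Q|a_Q|^q\nu(Q)\geq 2^{n(\r-\a)}\|a\|_{\ell_q^{c_{\a,n}}}^q$, and for $q=\infty$ the identity $\|\Psi(a)\|_{L_\nu^\infty}=\|a\|_{\ell_\infty^{c_n}}$.

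For $\star=K$, I would use that $\mathcal{Q}_n$ restricted to $\mathcal{W}_{\a,n}$ is the identity (again by disjointness of the $u_Q$) and has operator norm at most $3^m$ by \prettyref{lem:operatornormQn}. Hence for any $W<_k L_\nu^q$ and $f\in\BB\cap\mathcal{W}_{\a,n}$,
\[
\inf_{g\in W}\|f-g\|_{L_\nu^q}\geq 3^{-m}\inf_{g\in W}\|\mathcal{Q}_n(f-g)\|_{L_\nu^q}=3^{-m}\inf_{h\in\mathcal{Q}_n(W)}\|f-h\|_{L_\nu^q},
\]
with $\mathcal{Q}_n(W)<_k\mathcal{W}_{\a,n}$. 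Since every codimension-$k$ subspace $W'$ of $\mathcal{W}_{\a,n}$ is already in the range of $W\mapsto\mathcal{Q}_n(W)$ (take $W=W'$), taking infima gives $d_k^K(\BB,L_\nu^q)\gg d_k^K(\BB\cap\mathcal{W}_{\a,n},(\mathcal{W}_{\a,n},\|\cdot\|_{L_\nu^q}))$.

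For $\star=G$, \prettyref{lem:-Gelfand_extension} lets us take $W_0^{\s,p}$ as the ambient space with its inherited $L_\nu^q$-norm. For any $U<^k W_0^{\s,p}$ the intersection $U\cap\mathcal{W}_{\a,n}$ has codimension at most $k$ in $\mathcal{W}_{\a,n}$, so restricting the sup to $\BB\cap\mathcal{W}_{\a,n}\cap U$ and then taking the infimum over $U$ gives $d_k^G(\BB,L_\nu^q)\geq d_k^G(\BB\cap\mathcal{W}_{\a,n},(\mathcal{W}_{\a,n},\|\cdot\|_{L_\nu^q}))$. In both cases the finite-dimensional width on the right is evaluated through $\Psi$: since a larger norm in the target produces a larger width and $\rr=\r/q$,
\[
d_k^\star(\BB\cap\mathcal{W}_{\a,n},(\mathcal{W}_{\a,n},\|\cdot\|_{L_\nu^q}))\geq C\cdot 2^{-n\rr}\cdot 2^{n(\r-\a)/q}\,d_k^\star(\b_p^{c_{\a,n}},\ell_q^{c_{\a,n}})=C\cdot 2^{-\a n/q}\,d_k^\star(\b_p^{c_{\a,n}},\ell_q^{c_{\a,n}}).
\]
The $q=\infty$ case proceeds identically with $\mathcal{W}_n$ and $Q_n$ in place of $\mathcal{W}_{\a,n}$ and $Q_n(\a)$; since the norm identification is exact, the $\a$-scaling disappears and one recovers the factor $2^{-n\rr}$.

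The main bookkeeping hurdle is the precise exponent cancellation $2^{-n\rr}\cdot 2^{n(\r-\a)/q}=2^{-\a n/q}$, which is exactly the identity $\rr=\r/q$. Beyond this, the only non-obvious checks are that every codimension-$k$ subspace of $\mathcal{W}_{\a,n}$ arises as $\mathcal{Q}_n(W)$ for some admissible $W$ (trivial, since $\mathcal{Q}_n$ fixes $\mathcal{W}_{\a,n}$ pointwise) and as $U\cap\mathcal{W}_{\a,n}$ for some $U<^k W_0^{\s,p}$ (by Hahn--Banach extension of the defining functionals), so that both reductions to finite-dimensional widths are lossless.
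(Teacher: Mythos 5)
Your proposal is correct and follows essentially the same route as the paper: embedding a rescaled $\ell_p$-ball into $\BB$ via the well-separated bumps $u_Q$, lower-bounding the $L_\nu^q$-norm through $\J_\r(Q)\geq 2^{-\a n}$, and reducing to finite-dimensional widths via the projection $\mathcal{Q}_n$ (Kolmogorov) and intersection with $\mathcal{W}_{\a,n}$ (Gel{\cprime}fand); the exponent bookkeeping $2^{-n\rr}\cdot 2^{n(\r-\a)/q}=2^{-\a n/q}$ checks out. The two ``surjectivity'' checks you flag at the end are in fact unnecessary: the inequalities only require that $\mathcal{Q}_n(W)$, resp.\ $U\cap\mathcal{W}_{\a,n}$, be an \emph{admissible competitor} in the finite-dimensional width, not that every competitor arise this way.
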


\begin{proof}
We start with the case $\star=G$. For $q<\infty$, observe that for
$n$ large enough and $\left(a_{Q}\right)\in\b_{p}^{c_{\a,n}}$ we
have $\sum_{Q\in Q_{n}\left(\a\right)}C2^{-n\rr}a_{Q}u_{Q}\in\BB$
by \prettyref{lem:isomorphic_spheres}. Also, with \prettyref{lem:disjoint-3-cubes},
\prettyref{rem:DisjointQ_n}, and \prettyref{lem:support-scaling}
for $n$ large enough, we have
\begin{align*}
\left\Vert \sum_{Q\in Q_{n}\left(\a\right)}C2^{-n\rr}a_{Q}u_{Q}\right\Vert _{L_{\nu}^{q}} & \gg\left(\sum_{Q\in Q_{n}\left(\a\right)}\left(2^{-n\rr}\left\Vert a_{Q}u_{Q}\right\Vert _{L_{\nu}^{q}}\right)^{q}\right)^{^{1/q}}=\left(\sum_{Q\in Q_{n}\left(\a\right)}\left|a_{Q}\right|^{q}\Lambda\left(Q\right)^{\r/m}\left\Vert u_{Q}\right\Vert _{L_{\nu}^{q}}^{q}\right)^{^{1/q}}\\
 & \geq\left(\sum_{Q\in Q_{n}\left(\a\right)}\left|a_{Q}\right|^{q}\Lambda\left(Q\right)^{\r/m}\nu\left(Q\right)\right)^{^{1/q}}\\
 & \geq\left(\min_{Q\in Q_{n}\left(\a\right)}\J_{\r}\left(Q\right)\right)^{1/q}\left(\sum_{Q\in Q_{n}\left(\a\right)}\left|a_{Q}\right|^{q}\right)^{^{1/q}}\geq2^{-\a n/q}\left\Vert \left(a_{Q}\right)\right\Vert _{\ell_{q}^{c_{\a,n}}}.
\end{align*}
Note that the assumption $\card\left(\text{supp}(\nu)\right)=\infty$
ensures that $c_{\a,n}\to\infty$ for $n\to\infty$. Thus, for $k\in\N$
we can choose $n\in\N$ such that $2k\leq c_{\a,n}$ and we find 
\begin{align*}
d_{k}^{G}\left(\BB,L_{\nu}^{q}\right) & =\inf\left\{ \sup_{u\in U\cap\BB}\left\Vert u\right\Vert _{L_{\nu}^{q}}:U<^{n}L_{\nu}^{q}\right\} \geq\inf\left\{ \sup_{u\in U\cap\mathcal{W}_{\a,n}\cap\BB}\left\Vert u\right\Vert _{L_{\nu}^{q}}:U<^{n}L_{\nu}^{q}\right\} \\
 & \geq\inf\left\{ \sup_{u\in U\cap\mathcal{W}_{\a,n}\cap\BB}\left\Vert u\right\Vert _{L_{\nu}^{q}}:U<^{n}\mathcal{W}_{\a,n}\right\} \\
 & \gg2^{-\a n/q}\inf\left\{ \sup_{u\in U\cap\b_{p}^{c_{\a,n}}}\left\Vert u\right\Vert _{\ell_{q}^{c_{\a,n}}}:U<^{n}\ell_{q}^{c_{\a,n}}\right\} =2^{-\a n/q}d_{k}^{G}\left(\b_{p}^{c_{\a,n}},\ell_{q}^{c_{\a,n}}\right).
\end{align*}
Now for $q=\infty$ we have $\sum_{Q\in Q_{n}}C2^{-\rr}a_{Q}u_{Q}\in\mathscr{B}W_{0}^{\s,p}$
for $a=\left(a_{Q}\right)\in\b_{p}^{c_{n}}$. This gives 
\[
\left\Vert \sum_{Q\in Q_{n}}C2^{-n\rr}a_{Q}u_{Q}\right\Vert _{L_{\nu}^{\infty}}\gg2^{-n\rr}\max_{Q\in Q_{n}}\left|a_{Q}\right|\left\Vert u_{Q}\right\Vert _{L_{\nu}^{\infty}}=2^{-n\rr}\left\Vert a\right\Vert _{\ell_{\infty}^{c_{n}}}
\]
since $u_{Q}=1$ on $Q\in Q_{n}$ and $\nu\left(Q\right)>0$, i.\,e\@.
$\left\Vert u_{Q}\right\Vert _{L_{\nu}^{\infty}}=1$. We obtain
\begin{align*}
d_{k}^{G}\left(\BB,L_{\nu}^{\infty}\right) & \geq\inf\left\{ \sup_{u\in U\cap\mathcal{W}_{\a,n}\cap\BB}\left\Vert u\right\Vert _{L_{\nu}^{\infty}}:U<^{n}\mathcal{W}_{n}\right\} \\
 & \gg2^{-n\rr}\inf\left\{ \sup_{u\in U\cap\b_{p}^{c_{n}}}\left\Vert u\right\Vert _{\ell_{\infty}^{c_{n}}}:U<^{n}\ell_{\infty}^{c_{n}}\right\} =2^{-n\rr}d_{k}^{G}\left(\b_{p}^{c_{n}},\ell_{\infty}^{c_{n}}\right),
\end{align*}
which shows the claim for the Gel{\cprime}fand widths.

For the case $\star=K$ and $q<\infty$, observe that for $v\in\mathcal{W}_{\a,n}$
we have $\mathcal{Q}_{n}v=v$ and hence, according to \prettyref{lem:operatornormQn}
we have for $f\in L_{\nu}^{q}$, 
\begin{equation}
\left\Vert v-\mathcal{Q}_{n}f\right\Vert _{L_{\nu}^{q}}=\left\Vert \mathcal{Q}_{n}\left(v-f\right)\right\Vert _{L_{\nu}^{q}}\leq3^{m}\left\Vert v-f\right\Vert _{L_{\nu}^{q}}.\label{eq:KomogorovOperatorEst.}
\end{equation}
This allows the Kolmogorov widths to be estimated as follows
\begin{align}
\inf_{W<_{k}L_{\nu}^{q}}\sup_{u\in\BB}\inf_{f\in W}\left\Vert u-f\right\Vert _{L_{\nu}^{q}} & \geq\inf_{W<_{k}L_{\nu}^{q}}\sup_{v\in\mathscr{B}\mathcal{W}_{c_{\a,n}}}\inf_{f\in W}\left\Vert v-f\right\Vert _{L_{\nu}^{q}}\nonumber \\
 & \gg\inf_{W<_{k}L_{\nu}^{q}}\sup_{v\in\mathscr{B}\mathcal{W}_{\a,n}}\inf_{f\in W}\left\Vert v-\mathcal{Q}_{n}f\right\Vert _{L_{\nu}^{q}}\label{eq:KolmogorovLowerEstDiscretization}\\
 & \geq\inf_{W<_{k}\left(\mathcal{W}_{c_{\a,n}},\left\Vert \,\cdot\,\right\Vert _{L_{\nu}^{q}}\right)}\sup_{v\in\mathscr{B}\mathcal{W}_{\a,n}}\inf_{w\in W}\left\Vert v-w\right\Vert _{L_{\nu}^{q}}.\nonumber 
\end{align}
For a subspace $\tilde{W}<_{k}\left(\mathcal{W}_{\a,n},\left\Vert \,\cdot\,\right\Vert _{L_{\nu}^{q}}\right)$
we set $W\coloneqq\left\{ \left(\left(b_{Q}\right)_{Q\in Q_{n}\left(\a\right)}:\sum_{Q\in Q_{n}\left(\a\right)}b_{Q}u_{Q}\in\tilde{W}\right)\right\} $,
and with \prettyref{lem:isomorphic_spheres} and \prettyref{lem:support-scaling}
we find

\begin{align*}
\sup_{v\in\mathscr{B}\mathcal{W}_{\a,n}}\inf_{w\in\tilde{W}}\left\Vert v-w\right\Vert _{L_{\nu}^{q}} & \gg\sup_{\left(a_{Q}\right)\in\b_{p}^{c_{\a,n}}}\inf_{w\in\tilde{W}}2^{-n\rr}\left\Vert w-\sum_{Q\in Q_{n}\left(\a\right)}a_{Q}u_{Q}\right\Vert _{L_{\nu}^{q}}\\
 & =\sup_{\left(a_{Q}\right)\in\b_{p}^{c_{\a,n}}}\inf_{\left(b_{Q}\right)\in W}2^{-n\rr}\left\Vert \sum_{Q\in Q_{n}\left(\a\right)}\left(a_{Q}-b_{Q}\right)u_{Q}\right\Vert _{L_{\nu}^{q}}\\
 & \geq\sup_{a\in\b_{p}^{c_{\a,n}}}\inf_{b\in W}\left(\sum_{Q_{n}\left(\a\right)}\Lambda\left(Q\right)^{\r}\left\Vert \left(a_{Q}-b_{Q}\right)u_{Q}\right\Vert _{L_{\nu}^{q}}^{q}\right)^{1/q}\\
 & \geq\sup_{a\in\b_{p}^{c_{\a,n}}}\inf_{b\in W}\min_{Q\in Q_{n}\left(\a\right)}\J_{\r}\left(Q\right)^{1/q}\left(\sum_{Q_{n}\left(\a\right)}\left|a_{Q}-b_{Q}\right|^{q}\right)^{1/q}\\
 & \geq\sup_{a\in\b_{p}^{c_{\a,n}}}\inf_{b\in W}2^{-n\a/q}\left\Vert \left(a_{Q}-b_{Q}\right)\right\Vert _{\ell_{q}^{c_{\a,n}}}
\end{align*}
since for all $Q\in Q_{n}\left(\a\right)$ we have $\Lambda\left(Q\right)^{-\rr/m}\nu\left(Q\right)^{1/q}=\mathfrak{J}\left(Q\right)^{1/q}\geq2^{-n\a/q}$.
Therefore,
\begin{align*}
d_{k}^{K}\left(\BB,L_{\nu}^{q}\right) & \geq d_{k}^{K}\left(\mathscr{B}\mathcal{W}_{\a,n},L_{\nu}^{q}\right)\gg d_{k}^{K}\left(\mathscr{B}\mathcal{W}_{\a,n},\left(\mathcal{W}_{\a,n},\left\Vert \cdot\right\Vert _{L_{\nu}^{q}}\right)\right)\\
 & \gg2^{-n\a/q}d_{k}\left(\b_{p}^{c_{\a,n}},\ell_{q}^{c_{\a,n}}\right).
\end{align*}
Finally, for $q=\infty$ we also have $Q_{n}v=v$ for $v\in\mathcal{W}_{n}$
and thus \prettyref{eq:KomogorovOperatorEst.} and \prettyref{eq:KolmogorovLowerEstDiscretization}
still hold in this case. Using this and \ref{lem:isomorphic_spheres},
we find
\begin{align*}
\sup_{\left(a_{Q}\right)\in\b_{p}^{c_{n}}}\inf_{w\in\tilde{W}}2^{-n\rr}\left\Vert w-\sum_{Q\in Q_{n}}a_{Q}u_{Q}\right\Vert _{L_{\nu}^{\infty}} & =\sup_{\left(a_{Q}\right)\in\b_{p}^{c_{n}}}\inf_{\left(b_{Q}\right)\in W}2^{-n\rr}\left\Vert \sum_{Q\in Q_{n}}\left(a_{Q}-b_{Q}\right)u_{Q}\right\Vert _{L_{\nu}^{\infty}}\\
 & \geq\sup_{\left(a_{Q}\right)\in\b_{p}^{c_{n}}}\inf_{\left(b_{Q}\right)\in W}2^{-n\rr}\max_{Q\in Q_{n}}\left|a_{Q}-b_{Q}\right|
\end{align*}
for $\tilde{W}<_{k}\left(\mathcal{W}_{n},\left\Vert \,\cdot\,\right\Vert _{L_{\nu}^{q}}\right)$
and $W\coloneqq\left\{ \left(\left(b_{Q}\right)_{Q\in Q_{n}\left(\a\right)}:\sum_{Q\in Q_{n}\left(\a\right)}b_{Q}u_{Q}\in\tilde{W}\right)\right\} $.
Applying the infimum over such subspaces $\tilde{W}$ shows the claim.
\end{proof}
Now we are in the position to state the lower bounds in question.
\begin{prop}
\label{prop:AllLowerBounds} Suppose that for $\gamma\in\R$, $\star\in\left\{ K,G\right\} $,
we have $d_{n}^{\star}\left(\b_{p}^{2n},\ell_{q}^{2n}\right)\gg n^{-\gamma}$,
then 
\begin{enumerate}
\item ${\displaystyle -\overline{S}_{\r}-\gamma\leq\uAO_{\star}\left(\BB,L_{\nu}^{q}\right)}$
and 
\item ${\displaystyle -\underline{S}_{\r}-\gamma\leq\lAO_{\star}\left(\BB,L_{\nu}^{q}\right)}$.
\end{enumerate}
\end{prop}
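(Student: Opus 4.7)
The plan is to combine the discretisation lower bound from \prettyref{prop:discretesation_lower bound widths} with a monotonicity upgrade of the hypothesis on the finite-dimensional widths. First, I would observe that the assumption $d_{n}^{\star}(\b_{p}^{2n},\ell_{q}^{2n})\gg n^{-\gamma}$ lifts to $d_{k}^{\star}(\b_{p}^{M},\ell_{q}^{M})\gg k^{-\gamma}$ for every $M\geq 2k$: indeed, $\b_{p}^{2k}$ embeds isometrically into $\b_{p}^{M}$ by zero-padding (and the ambient $\ell_{q}$-norm is preserved), while widths are monotone in the approximated set. Feeding this into \prettyref{prop:discretesation_lower bound widths} yields the clean inequality
\[
d_{k}^{\star}(\BB,L_{\nu}^{q})\gg 2^{-\alpha n/q}\,k^{-\gamma}\qquad\text{for every }k\leq c_{\alpha,n}/2,
\]
valid for all $\alpha>0$ and all large $n$, with the analogous $2^{-n\rr}k^{-\gamma}$ bound in the case $q=\infty$ (with $c_{n}$ in place of $c_{\alpha,n}$).

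For (1), I fix $\alpha>0$ and extract a subsequence $(n_{j})$ along which $\log c_{\alpha,n_{j}}/(n_{j}\log 2)\to\overline{F}_{\r}(\alpha)$; such a subsequence exists because $c_{\alpha,n}\geq\lfloor\mathcal{N}_{\r,n}(\alpha)/5^{m}\rfloor$ by \prettyref{lem:disjoint-3-cubes}, so $\limsup_{n}\log c_{\alpha,n}/(n\log 2)=\overline{F}_{\r}(\alpha)$. Setting $k_{j}\coloneqq\lfloor c_{\alpha,n_{j}}/2\rfloor$, the displayed bound gives
\[
\frac{\log d_{k_{j}}^{\star}(\BB,L_{\nu}^{q})}{\log k_{j}}\geq -\frac{\alpha n_{j}\log 2}{q\log k_{j}}-\gamma+o(1),
\]
and taking $\limsup_{j\to\infty}$ yields $\uAO_{\star}(\BB,L_{\nu}^{q})\geq -\alpha/(q\overline{F}_{\r}(\alpha))-\gamma$. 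Finally, taking the supremum over $\alpha>0$ and invoking the identity $\overline{\mathcal{F}}_{\r}=s_{\r}$ furnishes the bound $-\overline{S}_{\r}-\gamma$; the case $q=\infty$ proceeds identically with $\overline{\dim}_{M}(\nu)$ playing the role of $\sup_{\alpha}\overline{F}_{\r}(\alpha)/\alpha$.

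The statement (2) is more delicate because $\lAO_{\star}$ is a $\liminf$ over \emph{all} $k$, so a subsequential argument no longer suffices: a gap in the sequence $c_{\alpha,n}$ could a priori wreck the ratio. The monotonicity upgrade is precisely what rescues the argument, since it has already replaced the potentially huge factor $\log c_{\alpha,n}$ by $\log k$ in the width estimate. For each large $k$ I take $n(k)$ to be the smallest $n$ with $c_{\alpha,n}\geq 2k$. By definition of $\underline{F}_{\r}(\alpha)$, for every $\epsilon>0$ and all sufficiently large $n$ one has $c_{\alpha,n}\gg 2^{n(\underline{F}_{\r}(\alpha)-\epsilon)}$; applied at $n(k)-1$ (for which $c_{\alpha,n(k)-1}<2k$ by minimality) this forces $n(k)\log 2\leq \log k/(\underline{F}_{\r}(\alpha)-\epsilon)+O(1)$. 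Substituting into the displayed bound, dividing by $\log k$, taking $\liminf_{k\to\infty}$, then sending $\epsilon\downarrow 0$ and $\sup_{\alpha>0}$, gives $\lAO_{\star}(\BB,L_{\nu}^{q})\geq -\underline{S}_{\r}-\gamma$. The case $q=\infty$ is settled by the same scheme. The main obstacle is exactly this $\lAO$ step; the resolution hinges on the fact that $\underline{F}_{\r}(\alpha)$, being a $\liminf$, provides a lower bound on $c_{\alpha,n}$ that is \emph{uniform} in large $n$, which is what allows $n(k)$ to be controlled for every $k$.
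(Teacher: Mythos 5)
Your proposal is correct and follows essentially the same route as the paper: part (1) by evaluating the discretised lower bound along $k\asymp c_{\a,n}$ and using $\overline{\mathcal{F}}_{\r}=s_{\r}$, and part (2) by inverting the uniform lower bound $c_{\a,n}\gg2^{n(\underline{F}_{\r}(\a)-\epsilon)}$ to choose $n(k)\asymp\log k/(\underline{F}_{\r}(\a)-\epsilon)$ with $c_{\a,n(k)}\geq2k$. The only differences are presentational — you make explicit the monotonicity $d_{k}^{\star}(\b_{p}^{M},\ell_{q}^{M})\geq d_{k}^{\star}(\b_{p}^{2k},\ell_{q}^{2k})$ for $M\geq2k$, which the paper uses tacitly, and you should (as the paper does) restrict to $\a$ with $\underline{F}_{\r}(\a)>0$ so that $n(k)$ is well defined, the remaining $\a$ contributing nothing to the supremum.
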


\begin{proof}
\emph{ad (1):} For $q<\infty$, consider the sequence $\left(k_{n}\right)_{n\in\N}\coloneqq\left(\left\lfloor c_{\a,n}/2\right\rfloor \right)_{n\in\N}$
and observe that $\left\lfloor c_{\a,n}/2\right\rfloor \asymp\mathcal{N}_{\r,n}\left(\a\right)$
for $\a>0$. By the definition of the upper approximation orders and
\prettyref{prop:discretesation_lower bound widths}, we have
\begin{align*}
\uAO_{\star}\left(\BB,L_{\nu}^{q}\right) & =\limsup_{n\to\infty}\frac{\log\left(d_{n}^{\star}\left(\BB,L_{\nu}^{q}\right)\right)}{\log(n)}\geq\limsup_{n\to\infty}\frac{\log\left(d_{k_{n}}^{\star}\left(\BB,L_{\nu}^{q}\right)\right)}{\log(k_{n})}\\
 & \gg\limsup_{n\to\infty}\frac{\log\left(2^{-\a n/q}d_{k_{n}}^{\star}\left(\b_{p}^{c_{\a,n}},\ell_{q}^{c_{\a,n}}\right)\right)}{\log(k_{n})}\\
 & \gg\limsup_{n\to\infty}\frac{-\a\log\left(2^{n}\right)}{q\log\left(\mathcal{N}_{\r,n}\left(\a\right)\right)}-\frac{\gamma\log\left(k_{n}\right)}{\log\left(k_{n}\right)}=\frac{-\a}{q\overline{F}_{\r}\left(\a\right)}-\gamma.
\end{align*}
Taking the supremum over all $\a>0$ yields
\[
\uAO_{\star}\left(\BB,L_{\nu}^{q}\right)\geq\sup_{\a>0}\frac{-\a}{q\overline{F}_{\r}\left(\a\right)}-\gamma=\frac{-1}{q\overline{\mathcal{F}}_{\r}}-\gamma.
\]
For $q=\infty$ it follows from \prettyref{prop:discretesation_lower bound widths}
with $\left(k_{n}\right)_{n\in\N}\coloneqq\left(\left\lfloor c_{n}/2\right\rfloor \right)_{n\in\N}$,
where $\left\lfloor c_{n}/2\right\rfloor \asymp\mathcal{N}_{n}$,
\begin{align*}
\uAO_{\star}\left(\BB,L_{\nu}^{\infty}\right) & \geq\limsup_{n\to\infty}\frac{\log\left(d_{k_{n}}^{\star}\left(\BB,L_{\nu}^{\infty}\right)\right)}{\log(k_{n})}\gg\limsup_{n\to\infty}\frac{\log\left(2^{-n\rr}d_{k_{n}}^{\star}\left(\b_{p}^{c_{n}},\ell_{\infty}^{c_{n}}\right)\right)}{\log(k_{n})}\\
 & \gg\limsup_{n\to\infty}\frac{-\rr\log\left(2^{n}\right)}{\log\left(\mathcal{N}_{n}\right)}-\frac{\gamma\log\left(n_{k}\right)}{\log\left(n_{k}\right)}=\frac{-\rr}{\overline{\dim}_{M}\left(\nu\right)}-\gamma.
\end{align*}

\emph{ad (2):} For $q<\infty$, we consider $\a>0$ such that $\underline{F}_{\r}(\a)>0$,
otherwise there is nothing to show. By the definition, this implies
for all $\varepsilon\in\left(0,\underline{F}_{\r}(\a)\right)$ that
there exists $N\in\N$ such that for all $n\geq N$ 
\[
\frac{\log\left(5^{-m}\mathcal{N}_{\r,n}\left(\a\right)\right)}{n\log2}\geq\underline{F}_{\r}\left(\a\right)-\varepsilon.
\]
Hence, $5^{-m}\mathcal{N}_{\r,n}\left(\a\right)\geq2^{n\left(\underline{F}_{\r}\left(\a\right)-\varepsilon\right)}$
holds for $n$ large enough. For $k\in\N$, let
\[
n_{k}\coloneqq\left\lceil \frac{\log2k}{\left(\underline{F}_{\r}\left(\a\right)-\varepsilon\right)\log2}\right\rceil 
\]
giving $c_{\a,n_{k}}\geq\left\lfloor 5^{-m}\mathcal{N}_{\r,n_{k}}\left(\a\right)\right\rfloor \geq\left\lfloor 2^{n_{k}\left(\underline{F}_{\r}\left(\a\right)-\varepsilon\right)}\right\rfloor \geq2k.$
Using \prettyref{prop:discretesation_lower bound widths} then shows
\begin{align*}
\lAO_{\star}\left(\BB,L_{\nu}^{q}\right) & =\liminf_{k\to\infty}\frac{\log\left(d_{k}^{\star}\left(\BB,L_{\nu}^{q}\right)\right)}{\log(k)}\gg\liminf_{k\to\infty}\frac{\log\left(2^{-n_{k}\a/q}d_{k}^{\star}\left(\b_{p}^{c_{\a,n_{k}}},\ell_{q}^{c_{\a,n_{k}}}\right)\right)}{\log(k)}\\
 & =\liminf_{k\to\infty}\frac{-\a n_{k}\log(2)}{q\log(k)}+\frac{\log\left(d_{k}^{\star}\left(\b_{p}^{2k},\ell_{q}^{2k}\right)\right)}{\log(k)}\\
 & \geq\liminf_{k\to\infty}\frac{-\a\log(2k)}{q\log\left(k\right)\left(\underline{F}_{\r}\left(\a\right)-\varepsilon\right)}-\frac{\gamma\log\left(k\right)}{\log\left(k\right)}=\frac{-\a}{q\underline{F}_{\r}\left(\a\right)-\varepsilon}-\gamma.
\end{align*}
Finally, taking the supremum over all $\a>0$ and $\varepsilon\to0$
shows this case.

For $q=\infty$, we assume that $\underline{\dim}_{M}\left(\nu\right)>0$
to rule out the trivial case. As above, since $c_{n}\asymp\mathcal{N}_{n}$
this implies for all $\varepsilon>0$ and $n\in\N$ large enough 
\[
\frac{\log\left(c_{n}\right)}{n\log2}\geq\underline{\dim}_{M}\left(\nu\right)-\varepsilon,
\]
i.\,e\@. $c_{n}\geq2^{n\left(\underline{\dim}_{M}\left(\nu\right)-\varepsilon\right)}$
and setting $n_{k}\coloneqq\left\lceil \log\left(2k\right)/\left(\left(\underline{\dim}_{M}\nu-\varepsilon\right)\log2\right)\right\rceil $
for $k\in\N$ gives $c_{n_{k}}\geq2k$. Using \prettyref{prop:discretesation_lower bound widths}
shows
\begin{align*}
\lAO_{\star}\left(\BB,L_{\nu}^{\infty}\right) & \gg\liminf_{k\to\infty}\frac{\log\left(2^{-n_{k}\rr}d_{k}^{\star}\left(\b_{p}^{c_{n_{k}}},\ell_{\infty}^{c_{n_{k}}}\right)\right)}{\log(k)}\\
 & \geq\liminf_{k\to\infty}\frac{-\rr n_{k}\log(2)}{\log(k)}+\frac{\log\left(d_{k}^{\star}\left(\b_{p}^{2k},\ell_{\infty}^{2k}\right)\right)}{\log(k)}\\
 & \geq\liminf_{k\to\infty}\frac{-\rr\log\left(2k\right)}{\log\left(k\right)\left(\underline{\dim}_{M}\left(\nu\right)-\varepsilon\right)}-\frac{\gamma\log\left(k\right)}{\log\left(k\right)}=\frac{-\rr}{\underline{\dim}_{M}\left(\nu\right)-\varepsilon}-\gamma.
\end{align*}
Finally, taking $\epsilon\to0$ proves the second claim für $q=\infty$.
\end{proof}
\begin{proof}[Proof of lower bounds in \prettyref{thm:MAIN} and \prettyref{thm:MAIN_lower}]
 We combined \prettyref{lem:KolmogorovFiniteDim}, \prettyref{cor:GelfandFiniteDim}
and \prettyref{cor:LinearFiniteDim} with \prettyref{prop:AllLowerBounds}
(1) and the fact that $\overline{\mathcal{F}}_{\r}=s_{\r}$ to provide
us with all cases of the lower estimates of the upper approximation
orders in \prettyref{thm:MAIN} for the Kolmogorov and in \prettyref{rem:Main}
(4) for the Gel{\cprime}fand cases, and with \prettyref{prop:AllLowerBounds}
(2) for lower estimates of the lower approximation orders as needed
in \prettyref{thm:MAIN_lower}. The last step of the proof is to observe
\prettyref{eq:trivialObservation}.\printbibliography
\end{proof}

\end{document}